\newcommand{\red}[1]{{\color{red}#1}}
\newtheorem{theorem}{Theorem}[section]
\newtheorem{definition}{Definition}[section]
\newtheorem{proposition}{Proposition}[section]
\newtheorem{lemma}{Lemma}[section]
\newtheorem{corollary}{Corollary}[section]
\newtheorem{remark}{Remark}[section]
\numberwithin{equation}{section}
\newcommand{\be}{\mathbf{e}}
\newcommand{\dist}{\operatorname{dist}}
\newcommand{\supp}{\operatorname{supp}}
\newcommand{\bt}{\mathbf{t}}
\newcommand{\br}{\mathbf{r}}
\newcommand{\bq}{\mathbf{q}}
\newcommand{\bx}{\mathbf{x}}
\newcommand{\cL}{\mathcal{L}}
\newcommand{\cM}{\mathcal{M}}
\newcommand{\bw}{\mathbf{w}}
\newcommand{\f}{\mathbf{f}}
\newcommand{\bz}{\mathbf{z}}
\newcommand{\by}{\mathbf{y}}
\newcommand{\Q}{\mathbb{Q}}
\newcommand{\Z}{\mathbb{Z}}
\newcommand{\R}{\mathbb{R}}
\newcommand{\N}{\mathbb{N}}
\def\red{\textcolor{red}}
\newcommand{\munu}{\mu}
\begin{document}

\title{Good functions, measures, and the Kleinbock-Tomanov conjecture}

\author{Victor Beresnevich}
\address{\textbf{Victor Beresnevich}\\
Department of Mathematics, University of York, Heslington, York, UK}
\email{victor.beresnevich@york.ac.uk }

\author{Shreyasi Datta}
\address{\textbf{Shreyasi Datta} \\
Department of Mathematics, University of York, Heslington, York, UK}
\email{shreyasi1992datta@gmail.com, shreyasi.datta@york.ac.uk}

\author{Anish Ghosh}
\address{\textbf{Anish Ghosh} \\
School of Mathematics,
Tata Institute of Fundamental Research, Mumbai, India 400005}
\email{ghosh@math.tifr.res.in}
\date{}

\thanks{AG was supported by the Government of India, Department of Science and Technology, Swarnajayanti fellowship DSTSJFMSA-01/2016-17 and a grant from the Infosys Foundation. AG acknowledges support of the Department of Atomic Energy, Government of India [under project 12 - R\&D - TFR - 5.01 - 0500].\\
Mathematics Subject Classification (2010). Primary 11J83; Secondary 11J54, 11J61, 37A45.}

\date{}

\begin{abstract}
In this paper we prove a conjecture of Kleinbock and Tomanov \cite[Conjecture~FP]{KT} on Diophantine properties of a large class of fractal measures on $\Q_p^n$. More generally, we establish the $p$-adic analogues of the influential results of Kleinbock, Lindenstrauss and Weiss \cite{KLW} on Diophantine properties of friendly measures.  We further prove the $p$-adic analogue of one of the main results in \cite{Kleinbock-exponent} concerning Diophantine inheritance of affine subspaces, which answers a question of Kleinbock. One of the key ingredients in the proofs of \cite{KLW} is a result on $(C, \alpha)$-good functions whose proof crucially uses the Mean Value Theorem. Our main technical innovation is an alternative approach to establishing that certain functions are $(C, \alpha)$-good in the $p$-adic setting. We believe this result will be of independent interest.
\end{abstract}
\maketitle

\tableofcontents

\section{Introduction}

The problems considered in this paper go back to the landmark work of Kleinbock and Margulis \cite{KM} from 1998 which settled the Baker-Sprind\v{z}huk conjecture and ushered in major sustained progress in the area of Diophantine approximation on manifolds. Several years later Kleinbock, Lindenstrauss and Weiss \cite{KLW} transformed the area by introducing the general framework of Diophantine properties of measures and developing the relevant Diophantine theory for a large natural class of measures on $\R^n$ which they named \emph{friendly measures}. Specifically, they proved that almost every point in the support of a friendly measure is not very well approximable by rationals. Since the class of friendly measures includes smooth measures on nondegenerate manifolds, \cite{KLW} generalizes the results of Kleinbock and Margulis \cite{KM}. Moreover, there are many other interesting examples of friendly measures such as measures supported on self-similar sets satisfying the open set condition \cite{KLW, U}, Patterson-Sullivan measures associated with convex cocompact Kleinian groups \cite{SU06} and Patterson-Sullivan measures associated with some geometrically finite groups \cite{DFSU}.

In another direction, Kleinbock and Tomanov \cite{KT} proved $p$-adic, and more generally $S$-arithmetic, versions of many of the results from \cite{KLW}. In particular, they established the $S$-arithmetic analogue of the Baker-Sprind\v{z}huk conjecture. In the same paper, Kleinbock and Tomanov conjectured that $p$-adic analogues of the main results in \cite{KLW} should hold, see Conjecture~FP in \cite{KT}, where `FP' stands for `Friendliness of Pushforwards'. Conjecture~FP requires establishing certain properties of pushforwards of measures subject to natural constrains, and via the theory developed in \cite{KT}, it leads to a coherent theory of Diophantine approximation over $\Q_p$. In this paper we settle this conjecture in full and prove several related results. Recently, there has been considerable progress in understanding $p$-adic Diophantine approximation on manifolds. We refer the reader to \cite{DG3, DG2, DG1} for its account.

One of the main technical points of \cite{KLW} is Proposition 7.3. This establishes that certain maps are ``good" with respect to friendly measures. We provide all relevant definitions later in the paper. The proof of \cite[Proposition 7.3]{KLW} crucially uses the Mean Value Theorem which is unavailable in the $p$-adic setting and thus represents the main barrier to establishing the aforementioned conjectures in the $p$-adic case. In this paper we develop a new technique that bypasses this barrier and thus enables us to develop a coherent theory of Diophantine approximation for friendly measures on $\Q_p^n$.


\section{Main results}\label{mainresults}

While postponing the definitions that we use until \cref{preli}, we now state our main results. In what follows, given a measure $\mu$ on an open subset $U$ of $\Q_\nu^d$ and a map $\f: U\to \Q_\nu^n$, $\f_\star\mu$ will denote the corresponding pushforward measure on $\Q_\nu^n$, where $\nu$ is either a prime number $p$ or $\infty$. Thus $\Q_\nu$ is either $\Q_p$ or $\R$. Our first main theorem reads as follows.

\begin{theorem}\label{ff}
Let $U$ be an open subset of $\Q_\nu^d$, and $\f: U\to \Q_\nu^n$. Let $\mu$ be an absolutely decaying Federer measure on $U$. Suppose that $\f$ is a $C^{l+1}$ map that is nonsingular and $l$-nondegenerate at $\mu$-almost every point. Then $\f_\star\mu$ is friendly.
\end{theorem}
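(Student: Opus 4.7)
The plan is to verify directly, for $\f_\star\mu$, the three defining properties of a friendly measure, mirroring the archimedean argument of KLW: the Federer condition, non-planarity, and absolute decay along neighborhoods of affine hyperplanes in $\Q_\nu^n$. Since all three properties are local and scale-invariant, I would localize around a $\mu$-generic point $x_0 \in U$ at which $\f$ is simultaneously nonsingular and $l$-nondegenerate, and verify each property for the restriction of $\f_\star(\mu|_V)$ to a small ball around $\f(x_0)$.

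\textbf{Federer and non-planarity.} The Federer property transfers from $\mu$ to $\f_\star\mu$ via the $p$-adic inverse function theorem at a nonsingular point: $\f$ is a bi-Lipschitz local homeomorphism on a neighborhood of $x_0$, so balls in $\Q_\nu^n$ pull back to sets comparable to balls in $U$, and the doubling-type bound for $\mu$ is preserved up to constants. For non-planarity it suffices to show that $\mu(\f\inv(L)) = 0$ for every affine hyperplane $L = \{h = 0\} \subset \Q_\nu^n$. The composition $h \circ \f$ is $C^{l+1}$ and, by $l$-nondegeneracy, has the property that the $\Q_\nu$-span of its derivatives up to order $l$ at a $\mu$-generic point is all of $\Q_\nu$; in particular $h \circ \f$ does not vanish identically on any relative neighborhood of $\supp\mu$, and the absolutely decaying hypothesis on $\mu$ forces its zero set to be $\mu$-null.

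\textbf{Reduction of absolute decay to good functions.} What remains is to estimate, for every affine hyperplane $L \subset \Q_\nu^n$,
\[
\f_\star\mu\bigl(B(y,r) \cap L^{(\ep)}\bigr) \;\le\; C\,(\ep/r)^\alpha\, \f_\star\mu\bigl(B(y,r)\bigr).
\]
Writing $L = \{h = 0\}$ for an affine form $h$ of unit norm, this unfolds into a sublevel-set estimate for $h \circ \f$ against $\mu$ on preimages of balls. By the standard reduction, which carries over to $\Q_\nu$, such an estimate follows once one knows that the family $\{h \circ \f : h \text{ affine, unit norm}\}$ is uniformly $(C,\alpha)$-good with respect to $\mu$, combined with the absolutely decaying hypothesis on $\mu$ itself applied in the domain.

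\textbf{The main obstacle.} The crux is thus the uniform $(C,\alpha)$-good property for $h \circ \f$ in the $p$-adic setting. In the archimedean case this is KLW's Proposition~7.3, proved by using the mean value theorem to bound the oscillation of $h \circ \f$ on a ball by its higher derivatives, which $l$-nondegeneracy keeps bounded below. No such inequality holds in $\Q_p$, and bridging this gap is the one genuinely new ingredient needed. The plan is to use the alternative argument developed in this paper: starting from the $p$-adic Taylor expansion of $h \circ \f$ and exploiting the ultrametric structure of $\Q_\nu$ together with a covering/rescaling scheme at the appropriate scale determined by the smallest non-vanishing derivative, one reduces the required estimate to a uniform $(C,\alpha)$-good bound for polynomials of degree at most $l$, which is elementary over $\Q_\nu$. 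Once this good-function statement is established for $h \circ \f$, the remaining verification of friendliness of $\f_\star\mu$ proceeds along the KLW template without further essential difficulty.
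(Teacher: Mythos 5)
Your proposal is correct and follows essentially the same route as the paper: localize at a nonsingular, $l$-nondegenerate point where $\f$ is bi-Lipschitz, and reduce all three friendliness properties of $\f_\star\mu$ to the uniform absolute $(C,\alpha)$-goodness of the family $\{h\circ\f : h \text{ affine}\}$ with respect to $\mu$, which is exactly the paper's Proposition~\ref{mainp} fed into the KLW-style sufficient criterion (Lemma~\ref{lemmmm}, whose proof the paper takes verbatim from KLW). The only difference is presentational — you unfold that criterion into its Federer/nonplanarity/decay components rather than citing it as a single lemma — and your identification of the $p$-adic good-function estimate as the sole new ingredient, handled by Taylor approximation and reduction to polynomials, matches the paper's actual strategy.
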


One of the main results in \cite{KLW} is the above theorem for $\nu=\infty$. Thus \cref{ff} is new when $\nu$ is a prime number $p$. As a corollary of \cref{ff} we prove the following theorem which resolves the conjecture of Kleinbock and Tomanov appearing as Conjecture~FP in \cite{KT}.

\begin{theorem}\label{FP}
Let $\mu$ be a self-similar measure on the limit set of an irreducible family of contracting similitudes of $\Q_\nu^d$ satisfying the open set condition, and let $\f: \Q_\nu^d\to \Q_\nu^n$ be a smooth map which is nonsingular and nondegenerate at $\mu$-almost every point. Then $\f_\star\mu$ is strongly extremal.
\end{theorem}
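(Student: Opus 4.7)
The plan is to derive \cref{FP} from \cref{ff} together with two auxiliary ingredients: first, that the measure $\mu$ itself is absolutely decaying and Federer on $\Q_\nu^d$; and second, that friendliness of a measure on $\Q_\nu^n$ implies its strong extremality.

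For the first ingredient, I would verify that any self-similar measure supported on the limit set of an irreducible family of contracting similitudes of $\Q_\nu^d$ satisfying the open set condition is both Federer (doubling) and absolutely decaying. The Federer property follows from self-similarity combined with the open set condition: using the contraction ratios and the bounded overlap that OSC guarantees, one compares $\mu$-masses of concentric balls with comparable radii by decomposing them according to the symbolic addresses of points in the IFS coding. The absolute decay condition---that $\mu$ does not concentrate mass near any affine hyperplane---is where irreducibility is essential: if the fixed points of the similitudes do not lie in a common proper affine subspace, then no iterate of the IFS collapses the support into such a subspace, and this yields the required uniform upper bound on the $\mu$-measure of $\varepsilon$-neighborhoods of affine hyperplanes. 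Both arguments are $p$-adic adaptations of the real-variable proofs in \cite{KLW}; the ultrametric structure of $\Q_\nu^d$ actually simplifies matters because balls are either disjoint or nested.

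With these two properties in hand, the hypotheses on $\f$---smoothness together with nonsingularity and nondegeneracy at $\mu$-a.e.\ point---match the hypotheses of \cref{ff} precisely, since a smooth map is $C^{l+1}$ for every $l$, and nondegeneracy at a point means $l$-nondegeneracy there for some $l$. Therefore \cref{ff} applies and shows that $\f_\star\mu$ is a friendly measure on $\Q_\nu^n$.

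For the final step, I would invoke the $S$-arithmetic framework developed in \cite{KT}: via the Dani-type correspondence between Diophantine properties of a measure and nondivergence of diagonal $p$-adic flows on the relevant homogeneous space, strong extremality of $\f_\star\mu$ reduces to a quantitative nondivergence estimate which is available for any friendly measure. The main obstacle in this plan is not the corollary itself but \cref{ff}, whose proof carries the principal technical content of the paper, namely the new $p$-adic approach to $(C,\alpha)$-good functions that replaces the use of the mean value theorem from \cite{KLW}; once \cref{ff} is granted, \cref{FP} follows by assembling it with the structural properties of self-similar measures and the Kleinbock--Tomanov correspondence.
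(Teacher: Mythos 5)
Your proposal matches the paper's proof: the authors derive \cref{FP} by combining \cref{simin} (the self-similar measure is absolutely decaying and Federer), \cref{ff} (so $\f_\star\mu$ is friendly), and \cite[Corollary~11.2]{KT} (friendly implies strongly extremal via the Kleinbock--Tomanov nondivergence machinery). The only difference is that you sketch a re-derivation of the decay and Federer properties, which the paper simply quotes as \cref{simin}.
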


We recall that a measure $\mu$ on $\Q_{\nu}^n$ is called strongly extremal if $\mu$-almost
every point is not ``very well multiplicatively approximable''. This implies that $\mu$ is extremal, i.e. that $\mu$-almost every point is not ``very well approximable''. We refer the reader to Section 11 of \cite{KT} for the definitions of these Diophantine properties.

Another main theorem of this paper is the $p$-adic analogue of the results of Kleinbock \cite{Kleinbock-exponent} regarding Diophantine approximations on affine subspaces. For the definitions of Diophantine exponents $\omega(\cdot)$ of measures and subspaces, the reader is referred to \cref{dioex} and \cite{DG2}.

 \begin{theorem} \label{affine}
 \label{DG} Suppose $\cL$ is a $d$-dimensional affine subspace of $\Q_\nu^n$.
  Let $\mu$ be an absolutely decaying and Federer measure on $\Q_\nu^d$ and let $\f: \Q_\nu^d \to \cL$ be a smooth map which is nondegenerate in $\cL$
at $\mu$-almost every point of $\Q_\nu^d$. Then $\omega(\f_\star\mu) = \omega(\cL)$.

\end{theorem}

When $\nu=\infty$, the above theorem is one of the main results in \cite{Kleinbock-exponent}. When $\nu$ is a prime number, \cref{affine} is new.
\subsection{Difficulties in  $p$-adic fields}
We want to point out the main difficulties in proving \cref{ff} in an ultrametric set-up.

\begin{itemize}
	\item There is no analogue of the Mean Value Theorem for $p$-adic continuously differentiable maps, not even for $C^1$ maps defined in \cref{function}. But the proof in case of $\R$ (\cite{KLW}) uses the Mean Value Theorem in a crucial manner.
	\item In order to bypass the use of the Mean Value Theorem, one may use the definition of $C^k$ maps. It is here that the difference quotients $\bar\Phi_\beta$ as in \cref{function}, kick into the picture. Note that $\bar\Phi_\beta$ is a multivariable function, even for maps from $\Q_\nu$ to $\Q_\nu$. This makes the analogue of many crucial lemmata in \cite{KLW} very different.
	\item The relation between $\bar\Phi^k f'$ with
	$f^{k+1}$ for any $C^{k+1}$ function from $\Q_\nu$ to $\Q_\nu$, poses diffficulties in various parts of the proof.
	\item For maps in higher dimensions, we use combinatorial arguments throughout the paper. For instance, we need to know the exact error terms in the higher dimensional ``Taylor theorem" in $\Q_\nu^d$. This seems to be unavaliable in the literature and we provide the necessary arguments in \cref{appen}. \item For the convenience of the reader, we have provided a detailed discussion of the main obstacles and our strategy in overcoming them in the simplest non-trivial case in \cref{Sketch}. 
\end{itemize}
We conclude this section with some remarks.
\subsection*{Remarks}
\begin{enumerate}
    \item  There has also been recent interest in the question of Diophantine approximation in positive characteristic, when one approximates Laurent series by ratios of polynomials. The positive characteristic version of the Baker-Sprind\v{z}huk conjectures were settled in \cite{G2}. The methods of this paper can be adapted to the function field setting in a relatively straightforward manner as they work over any ultrametric field.
    \item Similarly, it is possible to extent our main results to more than one place of $\Q$, namely to the $S$-arithmetic setting.
    \item In a forthcoming work, we will use the results in this paper to investigate badly approximable points on manifolds in $\Q_p$. In short, we establish the $p$-adic version of the main results of \cite{BNY22}.

\end{enumerate}

\section{Preliminaries}\label{preli}
In this section we recall some terminology and definitions from \cite{KLW, KT} and \cite{Kleinbock-exponent}. Our exposition necessarily follows these papers quite closely.

\subsection{Besicovitch spaces}\label{Besi} A metric space $X$ is called \emph{Besicovitch} \cite{KT} if there exists a constant $N_X$ such that the following holds: for any bounded subset $A$ of $X$ and for any family $\mathcal{B}$ of nonempty open balls in $X$ such that
   every  $x \in A$  is a center of some ball in $\mathcal B$,
 there is a finite or countable subfamily $\{B_i\}$ of $\mathcal B$ with
 \begin{equation}\label{besi_eqn} 1_A \leq \sum_{i}1_{B_i} \leq N_X. \end{equation}
{ Let us recall \cite[Lemma 2.3 ]{KT}. }
{
\begin{lemma}\label{crucial}
    For a metric space $X$, let us define
    $$M_X:=\sup\left\{k~|~\text{there are balls } B_i=B(x_i,r_i), 1\leq i\leq k, \text{ s.t } \cap_{i=1} B_i\neq\emptyset, x_i\notin \cup_{j\neq i} B_j \right\},$$
    and for any $c>1$, 
$$
D_X(c):= \sup\left\{k~\left|\begin{aligned}~&\text{there are } x\in X, r>0 \text{ and pairwise disjoint balls }\\ & B_1,\cdots,B_k \text{ of radius } r \text{ contained in } B(x,cr)\end{aligned}\right.\right\}.
$$
If both $M_X$ and $D_X(8)$ are finite, then $X$ is Besicovitch.
\end{lemma}}
{For any ultrametric space, any two balls either do not intersect, or one is contained inside another. Hence $M_X=1$ for any ultrametric space.}

{Now when $X=\Q_p$, since the balls are all of radius $p^n$ with $n\in \Z$, a ball of radius $p^{n}$ can contain at most $p$ many disjoint balls of radius $p^{n-1}$. Hence $D_X(8)<\infty$ for $X=\Q_p$ and applying \cref{crucial} it follows that $\Q_p$ is Besicovitch. One can also see that $\R^d$, $\Q_p^d$ are Besicovitch spaces; see \cite[Theorem 2.7]{Mattila} and \cite[Corollary 4.13]{Aldaz}. }

{ In particular, we will use later that $N_{\Q_p^d}=1$ since any two balls that intersect in this nonarchimedean space are contained in each other. Readers are also referred to \cite{DonneRigot2017,DonneRigot2019} for examples of several Besicovitch spaces.}

\subsection{Federer measures}

Let $\mu$ be a locally finite Borel measure on a metric space $X$, and $U$ be an open subset of $X$ with $\mu(U) > 0$. Following \cite{KLW} we say that $\mu$ is \textit{$D$-Federer on $U$} if
$$ \sup_{\substack{x \in \supp \mu,\, r > 0\\ B(x, 3r) \subset U}} \frac{\mu(B(x, 3r))}{\mu(B(x,r))} < D.$$
We say that $\mu$ as above is \textit{Federer} if, for $\mu$-almost every $x \in X$, there exists a neighbourhood $U$ of $x$ and $D > 0$ such that $\mu$ is $D$-Federer on $U$. We refer the reader to \cite{KLW} and \cite{KT} for examples of Federer measures.

\subsection{Nonplanar measures}

Suppose $\munu$ is a measure on $\Q_\nu^d$. We call $\munu$ \textit{nonplanar} if  $\munu(\mathcal L_1) = 0$ for any affine hyperplane $\mathcal L_1$ of $\Q_\nu^d$.
Let  $\f = (f_{1}, \cdots, f_{n})$ be a map from $\Q_\nu^d \to \Q_\nu^n$. The pair $(\f, \munu)$ will be called \textit{nonplanar at $x_0 \in \Q_\nu^d$} if, for any neighbourhood $B$ of $x_0$, the restrictions of $1, f_{1}, \cdots, f_{n}$ to $B \cap \supp \munu$ are linearly independent over $\Q_\nu$. This is equivalent to saying that $\f (B \cap \supp\munu)$ is not contained in any proper affine subspace of $\Q_\nu^d$. Note that for $d=n$, if $\mu$ on $\Q_\nu^n$ is  nonplanar then $(\mathbf{Id}, \mu)$ is nonplanar at $\mu$ almost every point.

Suppose $\f:X\subset \Q_\nu^d\to \mathcal{L}\subset \Q_\nu^n$ be a map, $\mu$ be a measure on $X$, and $\mathcal{L}$ be an affine subspace of $\Q_\nu^n$. Then $(\f,\mu)$ is called nonplanar in $\mathcal{L}$ if $$
\mathcal{L}=\langle \f(B\cap \supp{\mu})\rangle_a,
$$ for every nonempty ball $B$ such that $\mu(B)>0$; see \cite{Kleinbock-exponent}. Here $\langle M\rangle_a$ means the intersection all subspaces in $\Q_\nu^n$ containing $M.$

\subsection{$C^k$ functions in $\Q_\nu^d$}\label{function}
We recall the definition of $p$-adic $C^k$ functions, following closely the exposition in \cite{KT}. We refer the reader to \cite{Sc} for a detailed treatment of $p$-adic calculus. Let $f:U\subset \Q_{\nu} \to \Q_{\nu}$ be a function and assume that $U$ does not have any isolated points.
The {first difference quotient}
$\Phi^1f$ of
$f$ is the function of two variables given by
$$
\Phi^1f(x,y) := \frac{f(x) - f(y)}{x-y} \quad(x,y\in U,\ x\ne y)
$$
defined on $$\nabla^2 U:= \{(x,y)\in U\times  U\mid
x\ne y\}\,.
$$ We say that
$f$ is 
$C^1$ at $a$ if the limit $$\lim_{(x,y)\to(a,a)}\Phi^1f(x,y)$$
exists,
and that 
$f\in
C^1(U)$
if $f$ is $C^1$ at every point of $U$.

More generally, for $k\in \N$ set $$\nabla^k U :=
\{(x_1,\dots,x_k)\in U^{k}\mid  x_i\ne x_j \text{ for } i\ne j \}\,,$$
and define the $k$-th order difference quotient
$\Phi^kf:\nabla^{k+1} U\to \Q_\nu$ of $f$ inductively by $\Phi^0f := f$
and
$$\Phi^kf(x_1,x_2,\dots,x_{k+1}) :=
\frac{\Phi^{k-1}f(x_1,x_3,\dots,x_{k+1}) -
	\Phi^{k-1}f(x_2,x_3,\dots,x_{k+1})}{x_1-x_2}\,.
$$
We say that $f$ is $C^k$ if $\Phi^kf$ can be extended to a continuous function $\bar\Phi^{k}f:U^{k+1}\to \Q_\nu$.

Also, we say that $f$ is
$C^k$ at $a$
if the limit
$$
\bar\Phi^kf(a,\dots,a):=\lim_{(x_1,\dots,x_{k+1})\to(a,\dots,a)}\Phi^kf(x_1,\dots,x_{k+1})$$
exists. By \cite[Theorem 29.9]{Sc}  
$f\in C^k(U)$
if $f$ is
$C^k$ at every point of $U$.
%
It can be shown that $C^k$ functions $f$ are $k$
times differentiable, and that
$$
f^{(k)}(x)  = k!\bar\Phi^kf(x,\dots,x).
$$

\medskip

The definition of $C^k$ functions of  several
ultrametric variables follows along similar lines. If  $f$ is  a $\Q_{\nu}$-valued function on
$U_1 \times \cdots \times U_d$, where each $U_i$ is a subset
of
$\Q_{\nu}$ without isolated points, denote by $\Phi^k_if$ the $k$th
order difference quotient
of
$f$ with respect to the variable $x_i$, and, more generally, for a
multi-index $\beta = (i_1,\dots,i_d)$ let
$$\Phi_\beta f := \Phi_1^{i_1}\circ\dots\circ \Phi_d^{i_d} f.$$
It is not hard to
check that the composition can be taken in any order. For any fixed $j$ permutation of coordinates in $U_j^{i_j+1}$ does not change the function; see \cite[Lemma 29.2 (ii)]{Sc}. This
``difference quotient
of order $\beta$'' is defined on $\nabla^{i_1}U_1 \times \cdots
\times \nabla^{i_d}U_d$,
and as before we say that $f$ belongs to $C^k(U_1 \times \cdots \times U_d)$
if  for any multi-index $\beta$ with  $|\beta| :=
\sum_{j = 1}^d i_j\le k$,  $\Phi_\beta f$ is
extendable to a continuous function $\bar\Phi_\beta f:
U_1^{i_1+1} \times \cdots \times U_d^{i_d + 1}\to \Q_{\nu}$.  As in the
one-variable case,
one can show that partial derivatives $
\partial_\beta f :=\partial_1^{i_1}\circ\dots\circ \partial_d^{i_d}f$
of a  $C^k$ function  $f$ exist and are continuous as long as $|\beta|\le k$.
Moreover, one has
\begin{equation}\label{beta}
\partial_\beta f(x_1,\dots,x_d)  = \beta
!\bar\Phi_\beta f(x_1,\dots,x_1,\dots,x_d,\dots,x_d).
\end{equation}
where $\beta ! := \prod_{j = 1}^d i_j!$, and each of the variables
$x_j$ in the right hand side of
\cref{beta} is repeated $i_j + 1$ times.
Lastly, we recall the definition of a function being nonsingular at a point. 
\begin{definition}\label{defn_nonsing}
A function $C^1$ function $\f=(f_1,\cdots,f_n):U\subset \Q_\nu^d\to\Q_\nu^n$ is called nonsingular at a point $\bx_0$ if the matrix $\nabla\f(\bx_0)=[\partial_{e_i}f_j(\bx_0)]_{1\leq i\leq d, 1\leq j\leq n}$ has rank $d.$  
\end{definition}
\subsection{Nondegeneracy in an affine subspace}\label{nondeg}
Following \cite{Kleinbock-exponent}, we say that a map $\f : U \to \Q_\nu^n$, where $U$ is an open
subset of $\Q_\nu^d$, is $l$-nondegenerate in an affine subspace $\cL$ of $\Q_\nu^n$ at $\bx \in U$ if $\f(U) \subset \cL$, $\f$ is $C^l$ and the span of all the partial derivatives of $\f$ at $\bx$ up to order $l$ coincides with
the linear part of $\cL$. We say that $\f$ is nondegenerate in an affine subspace $\cL$ of $\Q_\nu^n$ at $\bx \in U$ if it is $l$-nondegenerate in an affine subspace $\cL$ of $\Q_\nu^n$ at $\bx$ for some $l\in\N$. We say that $\f$ is ($l$-)nondegenerate at $\bx\in U$ if the above hold with $\cL=\Q_\nu^n$.

If $\cM$ is a $d$-dimensional submanifold of $\cL$, we will say that
$\cM$ is nondegenerate in $\cL$ at $y\in \cM$ if any (equivalently, some) diffeomorphism $\f$
between an open subset $U$ of $\Q_\nu^d$ and a neighbourhood of $y$ in $\cM$ is nondegenerate
in $\cL$ at $\f^{-1}(y)$. We will say that $\f : U \to \cL$ (resp. $\cM \subset \cL$) is nondegenerate in $\cL$ if it is nondegenerate in $\cL$ at $\lambda$-almost every point of $U$, where $\lambda$ is the Lebesgue/Haar measure on $U$ (resp. of $\cM$, in the sense of the smooth measure class on $\cM$).

\subsection{Good maps and decaying measures}\label{sec3.5}
Let $\munu$ be a locally finite Borel measure on $\Q_{\nu}^d$ and $U$ be an open subset of $\Q_\nu^d$.
Given $A \subset \Q_\nu^d$ with $\munu(A)>0$ and a $\Q_\nu$-valued function $f$ on $\Q_\nu^d$, let
$$
\Vert f\Vert_{\mu,A}:=\sup_
{x\in A\cap\,\supp\munu}\vert f(\bx)\vert_{\nu}\,,
$$
where $|\cdot|_\nu$ is the standard $\nu$-adic absolute value.
A $\mu$-measurable function $f : \Q_\nu^d \to \Q_\nu$ is called $(C,\alpha)$-good on $U$ with respect to $\mu$ if for any open ball $B\subset U$ centered in $\supp(\mu)$ and $\varepsilon>0$ one has that
\begin{equation}\label{vb4}
\mu\left(\{\bx\in B~|~ \vert f(\bx)\vert_\nu <\varepsilon\}\right)\leq C\left(\frac{\varepsilon}{\Vert f\Vert_{\mu, B}}\right)^{\alpha}\mu(B).
\end{equation}
Similarly,
a function $f: \Q_{\nu}^d \to \Q_\nu$ is called  absolutely $(C,\alpha)$-good on $U$ with respect to $\mu$ if for any open ball $B\subset U$ centered in $\supp(\mu)$ and $\varepsilon>0$ one has
\begin{equation}\label{vb5}
\mu\left(\{\bx\in B~|~ \vert f(\bx)\vert_\nu <\varepsilon\}\right)\leq C\left(\frac{\varepsilon}{\Vert f\Vert_{B}}\right)^{\alpha}\mu(B)\,,
\end{equation}
where $\Vert f\Vert_{B}=\sup_{\bx\in B}|f(\bx)|_\nu$.
Since $\Vert f\Vert_{\mu, B}\leq \Vert f\Vert_{B}$, being absolutely $(C,\alpha)$-good implies being $(C,\alpha)$-good on $U$. The converse is true for measures having full support.

\begin{remark}\label{rem3.1}\rm
Note that if $f$ is (absolutely) $(C,\alpha)$-good w.r.t. $\mu$, \cref{vb4} and \cref{vb5} will remain true if the strict inequality in the left hand side is replaced by a non-strict inequality. Indeed, we trivially have that $\{\bx\in B~|~ \vert f(\bx)\vert_\nu \le \varepsilon\}\subset \{\bx\in B~|~ \vert f(\bx)\vert_\nu <\varepsilon+1/N\}$ for $N>1$. Then, for example in the case of absolutely  $(C,\alpha)$-good functions, we get that
$$
\mu(\{\bx\in B~|~ \vert f(\bx)\vert_\nu \le \varepsilon\})\le \mu(\{\bx\in B~|~ \vert f(\bx)\vert_\nu <\varepsilon+1/N\})\le
C\left(\frac{\varepsilon+1/N}{\Vert f\Vert_{B}}\right)^{\alpha}\mu(B)
$$
and it remains to let $N\to\infty$.
\end{remark}

\medskip

Next, let $\f=(f_1,\cdots,f_n): \Q_{\nu}^d \to \Q_{\nu}^n$ be a map, and $\mu$ be a locally finite Borel measure on $\Q_{\nu}^d$. We say that $(\f,\mu)$ is $(C,\alpha)$-good at $\bx$ if there exists a neighborhood $U$ of $\bx$ such that any linear combination of $1,f_1,\cdots,f_n$ is $(C,\alpha)$-good on $U$ with respect to $\mu$.
We will simply say $(\f,\mu)$ is good at $\bx$ if it is $(C,\alpha)$-good at $\bx$ for some positive $C$ and $\alpha$.
We will say $(\f,\mu)$ is good (on $U$) if it is good at $\mu$-almost every point $\bx$ (of $U$). When $\mu$ is Lebesgue/Haar measure on $\Q_\nu^d$, we omit `with respect to $\mu$' and just say that the map $\f$ is good (at $\bx$). As is well known, polynomials are $(C,\alpha)$-good at every point for some $C$ and $\alpha$ that depend only on $d$ and the degree of the polynomial in question, see \cite[Lemma~4.1]{GT}. In fact, there are many examples of good maps. We recall Lemma~2.5 from \cite{KM} and Proposition~4.2 from \cite{KT} which show that nondegenerate maps are good. For the definition of $C^l$ maps in the $p$-adic case, the reader is referred to \cref{function}.

\begin{proposition}[{See \cite[Lemma~2.5]{KM} and \cite[Proposition~5.1]{KT}}]\label{prop3.2}
Let $\f = (f_1,\cdots, f_n)$ be a $C^l$ map from an open subset $U \subset \Q_{\nu}^d$
to $\Q_{\nu}^n$
which is $l$-nondegenerate in $\Q_{\nu}^n$ at $\bx_0 \in U$. Then there is a neighbourhood $V \subset U$ of $\bx_0$
such that any linear combination of $1, f_1\cdots f_n$ is
$(C',\alpha)$-good on
$V$, where $C',\alpha>0$ only depends on $d,l$ and the field. In particular, the nondegeneracy of $\f$ in $\Q_{\nu}^n$ at $\bx_0$  implies that $\f$ is good at $\bx_0$.
\end{proposition}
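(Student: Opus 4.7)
The plan is to follow the standard reduction from nondegeneracy to goodness that appears in \cite{KM} in the real case and in \cite{KT} in the $p$-adic case. The argument has three steps.

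First, I would normalize. The defining inequality \eqref{vb4} for a function to be $(C,\alpha)$-good is invariant under multiplication of $f$ by a nonzero scalar. Writing $f_{\bc} := c_0 + c_1 f_1 + \cdots + c_n f_n$, it therefore suffices to prove the property uniformly in $\bc$ ranging over the compact ``unit sphere''
\[
S \;=\; \{\bc = (c_0,c_1,\ldots,c_n) \in \Q_\nu^{n+1} : \max_i |c_i|_\nu = 1\}.
\]

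Second, I would use $l$-nondegeneracy to extract a uniform lower bound on a derivative. By hypothesis, the partial derivatives of $f_1,\ldots,f_n$ at $\bx_0$ of orders $1$ through $l$ span $\Q_\nu^n$. So if $(c_1,\ldots,c_n)\neq 0$, at least one $D^\alpha f_{\bc}(\bx_0)$ with $1 \leq |\alpha| \leq l$ is nonzero; and if $(c_1,\ldots,c_n) = 0$ then $|c_0|_\nu = 1$ and $|f_{\bc}|_\nu \equiv 1$. In either case $\max_{|\alpha|\leq l} |D^\alpha f_{\bc}(\bx_0)|_\nu > 0$. Since this maximum depends continuously on $(\bc,\bx)$ and $S$ is compact, a standard covering argument produces $\delta>0$ and a neighbourhood $V\subset U$ of $\bx_0$ such that
\[
\max_{|\alpha| \leq l}\, |D^\alpha f_{\bc}(\bx)|_\nu \;\geq\; \delta \qquad \text{for all } (\bc,\bx) \in S\times V.
\]
Shrinking $V$ if necessary, the $C^l$-norms of $f_1,\ldots,f_n$ on $V$ are simultaneously uniformly bounded above.

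Third, I would invoke the auxiliary one-dimensional ``goodness from a non-vanishing derivative'' lemma (present in \cite{KM} for $\R$ and \cite{KT} for $\Q_p$): if $g\in C^l$ on a ball $B\subset \Q_\nu$ satisfies $|g^{(j)}(t)|_\nu \geq \delta$ on $B$ for some $j \leq l$, and the higher derivatives of $g$ are uniformly bounded above, then $g$ is $(C',1/l)$-good on $B$, with $C'$ depending only on $\delta$, $l$, the upper bound, and $\nu$. One then lifts this to the multivariable setting by slicing $V$ along the coordinate direction realising the maximum of Step~2 and applying Fubini to Haar measure on $\Q_\nu^d$. The resulting constants $C',\alpha$ are independent of $\bc\in S$, which is exactly what the proposition demands.

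The main obstacle is the auxiliary one-dimensional lemma used in the third step. Over $\R$ it is proved using Taylor's theorem with Lagrange remainder, which pins the zero set of $g$ into a short interval; in the absence of the mean value theorem over $\Q_\nu$, one must instead work with the difference-quotient definition of $C^l$ (the same viewpoint underlying the operators $\bar\Phi_\beta$ used later in this paper) and control the clustering of zeros of the iterated divided differences of $g$. This delicate analysis is carried out in \cite{KT}, and its refinement is one of the central technical themes of the present paper.
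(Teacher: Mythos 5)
Your outline is correct and is essentially the argument of the cited sources: the paper itself does not reprove Proposition 3.2 but quotes it from \cite{KM} (real case) and \cite{KT} ($p$-adic case), and those proofs proceed exactly as you describe — compactness of the coefficient sphere plus nondegeneracy to get a uniform lower bound on some derivative (equivalently, on some difference quotient $\bar\Phi_\beta f_{\bc}$, which is even larger since $|\beta!|_\nu\le 1$), followed by the one-dimensional goodness lemma and a Fubini-type reduction to several variables. You also correctly identify that the ultrametric version of the one-dimensional lemma in \cite{KT} replaces the Lagrange-remainder/MVT argument with difference quotients, which is precisely the technical point this paper later has to confront for the measure-theoretic analogue in Proposition 4.1.
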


\noindent We further have the following corollary whose proof is identical to that of \cite[Corollary~3.2]{Kleinbock-extremal}.

\begin{corollary}
Let $\cL$ be an affine subspace of $\Q_{\nu}^n$ and let $\f = (f_1,\cdots, f_n)$
be a map from an open subset $U$ of $\Q_{\nu}^d$ to $\cL$ which is nondegenerate
in $\cL$ at $\bx_0 \in U$. Then $\f$ is good at $\bx_0$.
\end{corollary}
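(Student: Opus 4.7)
The plan is to reduce this corollary to \cref{prop3.2} by means of an affine change of coordinates on the target. Let $k$ denote the dimension of $\cL$ and let $V\subset\Q_\nu^n$ be the linear part of $\cL$. Fix any affine isomorphism $\pi:\cL\to\Q_\nu^k$, and set $\tilf:=\pi\circ\f:U\to\Q_\nu^k$. Since $\f$ is $C^l$ and $\pi$ is affine (hence smooth), $\tilf$ is $C^l$.

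Next, I would verify that $\tilf$ is $l$-nondegenerate (in $\Q_\nu^k$) at $\bx_0$. The linear part of $\pi$ is a linear isomorphism $L:V\to\Q_\nu^k$, and by the chain rule each partial derivative of $\tilf$ of order $\le l$ at $\bx_0$ is the image under $L$ of the corresponding partial derivative of $\f$. By the hypothesis that $\f$ is nondegenerate in $\cL$ at $\bx_0$, these partials of $\f$ span $V$; applying $L$ we find that the partials of $\tilf$ at $\bx_0$ span $\Q_\nu^k$, which is exactly $l$-nondegeneracy in $\Q_\nu^k$.

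Now \cref{prop3.2} applies to $\tilf=(\tilde f_1,\dots,\tilde f_k)$ and yields a neighbourhood $W\subset U$ of $\bx_0$ together with constants $C',\alpha>0$ such that any $\Q_\nu$-linear combination of $1,\tilde f_1,\dots,\tilde f_k$ is $(C',\alpha)$-good on $W$. It remains to transfer this to linear combinations of $1,f_1,\dots,f_n$. Given constants $c_0,c_1,\dots,c_n\in\Q_\nu$, the affine function $c_0+\sum_{i=1}^n c_ix_i$ on $\Q_\nu^n$ restricts to an affine function on $\cL$, which under $\pi^{-1}$ pulls back to an affine function $c_0''+\sum_{j=1}^k c_j''y_j$ on $\Q_\nu^k$. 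Since $\f(U)\subset\cL$, for every $\bx\in U$ we obtain
\[
c_0+\sum_{i=1}^n c_if_i(\bx)\;=\;c_0''+\sum_{j=1}^k c_j''\tilde f_j(\bx).
\]
Thus every linear combination of $1,f_1,\dots,f_n$ agrees on $U$ with a linear combination of $1,\tilde f_1,\dots,\tilde f_k$, and is therefore $(C',\alpha)$-good on $W$ with respect to $\mu$ (in the present corollary $\mu$ is the Haar measure, but the same argument works in the measured setting). This shows $\f$ is good at $\bx_0$.

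There is no real obstacle here: the argument is purely formal once \cref{prop3.2} is in hand. The only point that requires (mild) care is the equivariance of partial derivatives under the affine isomorphism $\pi$, which identifies nondegeneracy in $\cL$ with nondegeneracy in $\Q_\nu^k$; this is the step where the $l$-nondegeneracy hypothesis is used, and it relies only on the chain rule and the fact that $L$ is a linear isomorphism.
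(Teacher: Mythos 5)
Your argument is correct and is precisely the reduction the paper has in mind: it cites the proof as identical to that of \cite[Corollary~3.2]{Kleinbock-extremal}, which likewise parametrizes $\cL$ by an affine isomorphism onto $\Q_\nu^{\dim\cL}$, notes that nondegeneracy in $\cL$ becomes ordinary $l$-nondegeneracy of the composed map, applies \cref{prop3.2}, and observes that every affine combination of $1,f_1,\dots,f_n$ coincides on $U$ with an affine combination of the composed coordinates. No gaps.
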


Given $C, \alpha > 0$ and an open subset $U$ in  $\Q_\nu^n$, we say that $\munu$ is (absolutely) \textit{$(C, \alpha)$-decaying on $U$} if any affine map is (absolutely) \textit{$(C, \alpha)$-good on $U$} w.r.t. $\mu$.

\subsection{Friendly measures}
We are now ready to define \textit{friendly} measures following \cite{KLW}.
\begin{definition}[Friendly measure]
A locally finite Borel measure $\munu$ on $\Q_\nu^n$ is called friendly if $\munu$ is nonplanar and for $\munu$-almost every $\bx\in \Q_\nu^n$, there exist a neighbourhood $U$ of $\bx$ and positive $C,\alpha, D > 0$ such that $\munu$ is $D$-Federer on $U$ and $\munu$ is $(C, \alpha)$-decaying on $U$.
\end{definition}

Examples of friendly measures were constructed in \cite{KLW} and their $p$-adic analogues were desribed in \cite{KT}. We briefly recall the discussion from \cite[Section~11.6]{KT}. Let ``$\dist$'' be a metric on $\Q_\nu^n$ induced by $|\,\cdot\,|_\nu$. A map $h: \Q_{\nu}^n \to \Q_{\nu}^n$ is a contracting similitude with contraction rate $\rho$ if
$0 < \rho < 1$ and
$$ \dist(h(\bx),h(\by))= \rho \dist(\bx,\by) \quad\text{ for all } \bx, \by \in \Q_{\nu}^n. $$

By a result of Hutchinson \cite{Hutchinson}, for any finite family $h_1,\dots, h_m$ of contracting similitudes
there exists a unique nonempty compact set $Q$, called the limit set of the family,
such that
$$Q = \bigcup_{i=1}^{m}h_i(Q). $$

The family $h_1,\dots, h_m$ are said to satisfy the open set condition if there exists an open
subset $U \subset \Q_{\nu}^n$ such that
$$h_i(U) \subset U \quad\text{ for all } i = 1, \dots, m,$$
and
$$i \neq j \quad\Longrightarrow \quad h_i(U) \cap h_j(U) = \emptyset. $$

By a result of Hutchinson \cite{Hutchinson}, if $h_i, i = 1,\dots, m$, are contracting similitudes
with contraction rates $\rho_i$ satisfying the open set condition, and if $s > 0$ is the
unique solution to the equation $\sum_{i}\rho_{i}^{s} = 1$ then the $s$-dimensional Hausdorff measure $\mathcal{H}^s$ of $Q$ is positive and finite. The number $s$ is called the similarity dimension of the family $\{h_i\}$.

The family $\{h_i\}$ is irreducible if there does not exist a finite $\{h_i\}$-invariant collection
of proper affine subspaces of $\Q_{\nu}^n$. The following proposition is slightly stronger than \cite[Proposition 11.3]{KT}, which in turn is a direct adaptation of  \cite[Theorem 2.3]{KLW}. It provides a rich class of examples of friendly measures.

\begin{proposition}\label{simin}
Let $\{h_1, \dots, h_m\}$ be an irreducible
family of contracting similitudes on $\Q_{\nu}^n$
satisfying the open set condition, $s$ its similarity
dimension, and $\mu$ the restriction of $\mathcal{H}^s$ to the limit set $Q$ of the family $\{h_i\}$. Then $\mu$ is absolutely decaying and Federer.
\end{proposition}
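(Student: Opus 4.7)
The plan is to establish the two parts---Federer and absolutely decaying---essentially by following the strategy of \cite[Proposition~11.3]{KT} and \cite[Theorem~2.3]{KLW}, with an additional argument to handle the word ``absolutely''. For the Federer property, the open set condition combined with the self-similar decomposition $Q=\bigcup h_{i_1}\circ\cdots\circ h_{i_k}(Q)$ builds up each ball $B(\bx_0,r)$ centered at $\bx_0\in Q$ out of cylinders of comparable scale, and $\mathcal{H}^s|_Q$ assigns mass $(\rho_{i_1}\cdots\rho_{i_k})^s$ to each such cylinder. A direct comparison of the cylinder contents of $B(\bx_0,r)$ and $B(\bx_0,3r)$ yields a uniform doubling constant, exactly as in \cite{KT}, with no new difficulty in the ultrametric setting.

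For the decay, I would directly follow the iterative scheme of \cite{KLW,KT}: decay on an arbitrary ball $B$ centered in $Q$ is reduced to decay on finitely many cylinder images at a single reference scale, and the irreducibility hypothesis then provides, via a compactness argument on the projective space of affine-hyperplane equations, a uniform bound preventing $\mu$ from concentrating near any single hyperplane. This reproduces the $(C,\alpha)$-decay inequality with $\|f\|_{\mu,B}$ on the right-hand side, which is what \cite[Proposition~11.3]{KT} delivers.

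The new ingredient is the upgrade to $\|f\|_B$, which reduces to a comparison $\|f\|_{\mu,B}\ge c_Q\|f\|_B$ uniform over nonzero affine $f$ and balls $B$ centered in $Q$. In the ultrametric setting, for $B=B(\bx_0,r)$ and affine $f$ with gradient $\ba$, one has $\|f\|_B=\max(|f(\bx_0)|_\nu,\,|\ba|_\nu r)$; when $|f(\bx_0)|_\nu$ dominates, the point $\bx_0\in Q$ itself witnesses $\|f\|_{\mu,B}=\|f\|_B$, so only the gradient-dominated case needs work. There I would produce $\bx\in B\cap Q$ with $|\ba\cdot(\bx-\bx_0)|_\nu\ge c_Q|\ba|_\nu r$ by applying the same projective compactness argument to the linear parts of affine functions, then pulling back to arbitrary scales via the similitudes. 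The main obstacle I anticipate is securing the uniformity of $c_Q$ over all nonzero $\ba$ and all scales: qualitative nonplanarity is immediate from irreducibility, but the quantitative uniform spread requires compactness on the projectivization of linear functionals combined with the self-similarity to propagate a bound from a single reference scale to every ball centered in $Q$.
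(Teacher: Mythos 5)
Your proposal is correct and follows the same route as the sources on which this proposition rests: the paper itself gives no proof, deferring entirely to \cite[Proposition 11.3]{KT} and \cite[Theorem 2.3]{KLW}, whose cylinder-decomposition, compactness-on-hyperplanes, and self-similarity-propagation arguments are exactly what you outline. Your ultrametric case split $\|f\|_B=\max\bigl(|f(\bx_0)|_\nu,\|\ba\|r\bigr)$ reducing the ``absolutely'' upgrade to the single estimate $\sup_{\bx\in B\cap Q}|\ba\cdot(\bx-\bx_0)|_\nu\ge c_Q\|\ba\|r$ is a clean and valid way to package the quantitative nonplanarity that those cited proofs already establish (and the clopenness of $\nu$-adic balls makes your compactness argument over centers and normalized functionals go through without the continuity issues one would face over $\R$).
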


\subsection{Diophantine exponents}\label{dioex}
For $\by\in\Q_\nu^n$, we define 
$$
\omega(\by)=\sup\{\omega~|~\vert \bq\cdot\by+q_0\vert_\nu\leq \frac{1}{\Vert (\bq, q_0)\Vert_\infty^\omega}\text{ for infinitely many } (\bq,q_0)\in\Z^{n+1}\setminus 0\}.$$ Here $\Vert \cdot\Vert_\infty$ is the max norm in $\R^{n+1}.$
For a Borel measure $\mu$ on $\Q_\nu^{n}$, from  \cite{Kleinbock-exponent} we recall the Diophantine exponent $\omega(\mu)$ of $\mu$ to be 
\begin{equation*}
\omega(\mu) = \sup\{ v~:~ \mu(\{y~|~\omega(y) > v\})>0\}.
\end{equation*}
If $\f:U\subset \Q_\nu^d\to \Q_\nu^n$ is a map such that, $\mathcal{M}=\f(U)$, where $U$ is an open subset of $\Q_\nu^d$, we define $\omega(\mathcal{M})=\omega(\f_\star\lambda)$. Here $\lambda$ is the Haar measure on $\Q_\nu^d.$

\section{Proofs of \cref{ff}, \cref{FP} and \cref{DG} }

We need the following proposition in order to prove \cref{ff} and \cref{DG}.

\begin{proposition}\label{mainp}
Let $\f=(f_1,\cdots,f_n): U\subset \Q_\nu^d\to \Q_\nu^n$ be a $C^{l+1}$ map, and let $\bx_0\in U$ be such that $\f$ is $l$-nondegenerate at $\bx_0$. Let $\mu$ be a locally finite Borel measure which is $D$-Federer and absolutely $(C,\alpha)$-decaying on $U$ for some $D,C,\alpha>0$. Then there exists a neighbourhood $V\subset U$ of $\bx_0$ and a positive $C'>0$ such that any
\begin{equation}\label{vb1}
g\in \mathcal{L}_{\f}:=\left\{c_0+\sum_{i=1}^n c_i f_i:c_0,\dots,c_n\in\Q_\nu\right\}
\end{equation}
is absolutely $(C',\frac{\alpha}{(2^{l+1}-2)})$-good on $V$ with respect to $\mu$.
\end{proposition}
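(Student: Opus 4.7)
The plan is to prove \cref{mainp} by an iterative descent on the order of a partial derivative of $g$ that is bounded away from zero near $\bx_0$. The crucial $p$-adic ingredient, which replaces the mean value theorem used in the corresponding step of \cite{KLW}, is a uniform control on the continuous extensions $\bar\Phi_\beta g$ of the multi-variable difference quotients; these exist and are bounded on a compact neighbourhood of $\bx_0$ since $g\in C^{l+1}$.

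First I would establish a uniform nondegeneracy step. By linearity and scaling it suffices to consider $g = c_0 + \sum c_i f_i$ with $\max_i |c_i|_\nu = 1$. The $l$-nondegeneracy of $\f$ at $\bx_0$ asserts that, for any such $g$, some multi-index $\beta$ with $1 \le |\beta| \le l$ satisfies $\partial_\beta g(\bx_0) \ne 0$. Compactness of the $\nu$-adic unit sphere in $\Q_\nu^n$ and continuity of partial derivatives then yield a single $\lambda > 0$ and a neighbourhood $V \subset U$ of $\bx_0$ such that, for every admissible $g$, there exists a multi-index $\beta(g)$ with $1 \le |\beta(g)| \le l$ and $|\partial_{\beta(g)} g(\bx)|_\nu \ge \lambda$ for all $\bx \in V$. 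Simultaneously, since $g$ is $C^{l+1}$, there is a uniform bound $M$ on $|\bar\Phi_\gamma g|$ for all $|\gamma| \le l+1$ on $V$.

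The core of the proof is the following iterative lemma: if $\partial_\beta g$ is absolutely $(C', \alpha')$-good on a ball $B \subset V$ centred in $\supp\mu$, and $|\beta| \ge 1$, then for some $j$ with $\beta_j \ge 1$ the lower-order derivative $\partial_{\beta - \be_j} g$ is absolutely $(C'', \alpha'')$-good on a slightly smaller ball, with the exponent recursion $1/\alpha'' = 2/\alpha' + 2/\alpha$. Starting from the base case $\alpha_0 = +\infty$ (reflecting that $\{|\partial_{\beta(g)} g|_\nu < \varepsilon\}$ is empty for $\varepsilon < \lambda$, so the good-inequality holds trivially with any exponent), this recursion unwinds to $\alpha_k = \alpha/(2^{k+1} - 2)$, so after at most $l$ iterations we reach $g$ with exponent at least $\alpha/(2^{l+1} - 2)$. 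The inductive step itself would use the first-order difference-quotient identity applied to $\partial_{\beta - \be_j} g$, namely
$$
\partial_{\beta - \be_j} g(\bx) - \partial_{\beta - \be_j} g(\by) = \sum_{i=1}^{d} (x_i - y_i)\, \bar\Phi^1_i(\partial_{\beta - \be_j} g)(\ldots),
$$
to produce the $p$-adic Lipschitz-type estimate $|\partial_{\beta - \be_j} g(\bx) - \partial_{\beta - \be_j} g(\by)|_\nu \le M \max_i |x_i - y_i|_\nu$. This yields an inclusion of the form $\{\bx \in B : |\partial_{\beta - \be_j} g(\bx)|_\nu < \varepsilon\} \subset \{\bx \in B : |\partial_\beta g(\bx)|_\nu \lesssim \max(\varepsilon/r, Mr)\}$ for a ball $B$ of radius $r$, and a balanced application of the inductive hypothesis on $\partial_\beta g$ together with the absolute $(C, \alpha)$-decay of $\mu$ (applied to the affine leading part of $\partial_{\beta - \be_j} g$) gives the claimed recursion on the exponent.

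The principal obstacle, as flagged by the authors, lies in the book-keeping of multi-variable difference quotients: $\bar\Phi_\gamma g$ depends on more arguments than the pointwise derivative $\partial_\gamma g$, so passing between them requires collapsing certain variables to a diagonal, and at each inductive step one must identify which arguments of $\bar\Phi_\gamma g$ remain free and which are fixed so that the uniform bound $M$ can be reused consistently. A secondary subtlety is that the Federer property of $\mu$ must be invoked correctly when moving between balls of different radii in the inductive step; the balancing that produces the sharp exponent $\alpha/(2^{l+1} - 2)$, rather than the cruder $\alpha/2^l$ one might get from naively halving at each stage, requires this book-keeping to be carried out precisely.
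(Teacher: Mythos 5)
Your high-level architecture coincides with the paper's: a descent on the order of the distinguished derivative, with the base case supplied by a uniform lower bound obtained from compactness of the normalized coefficient sphere $\mathcal{S}_{\f}$, an inductive step that splits a ball into the locus where the next derivative is small (handled by the induction hypothesis) and its complement (covered by small balls on which one linearizes via first-order difference quotients and invokes the absolute decay of $\mu$), and exactly the recursion $\eta_k=\eta_{k-1}/(2(\eta_{k-1}+1))$ yielding $\alpha/(2^{l+1}-2)$. This is the content of \cref{lem2}, \cref{lem3} and \cref{good}.

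The genuine gap is in your ``uniform nondegeneracy step''. The pointwise bounds you extract --- a single $\lambda>0$ with $|\partial_{\beta(g)}g|\ge\lambda$ on $V$, and a single $M$ bounding all $|\bar\Phi_\gamma g|$ --- are not the hypotheses the induction needs. The target inequality must hold for \emph{every} ball $B=B(\by,r)\subset V$ centred in $\supp\mu$ with $\Vert g\Vert_B$ in the denominator, and $\Vert g\Vert_B$ can be as small as a constant times $r^{l}$; hence every quantitative input to the induction must be stated relative to $\Vert g\Vert_B$ and must scale correctly in $r$: one needs $\inf|\bar\Phi_\beta g|\ge s\,r^{-|\beta|}\Vert g\Vert_B$, $\sup|\bar\Phi_\gamma g|\le S\,r^{-|\gamma|}\Vert g\Vert_B$, the comparison inequalities \eqref{eq0} between $\inf|\bar\Phi_{\alpha_1}\partial_{\alpha_2}g|$ for different splittings of $\beta$, and the resulting two-sided bound $\Vert\partial_{e_j}g\Vert_B\asymp r^{-1}\Vert g\Vert_B$ of \eqref{e4} --- all uniformly over $g\in\mathcal{S}_{\f}$ and over all small balls. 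Your $\lambda$ and $M$ give these only for $r\asymp1$ and degenerate completely as $r\to0$ relative to $\Vert g\Vert_B$. Supplying the scale-correct versions is where the paper spends most of its effort: a compactness and maximality argument on normalized Taylor polynomials (\cref{poly_maximal}, \cref{poly}), an explicit multivariate $p$-adic Taylor theorem with controlled error (\cref{TAYLOR}, \cref{taylor}, \cref{equi}), and the rescaling of each $B(\by,r)$ to the unit ball together with the lower bound $\Vert g\Vert_{B(\by,r)}\ge\eta r^l$ (\cref{bounds}, \cref{LL}). Without this layer the inductive step cannot close: the thresholds at which you invoke the goodness of $\partial_\beta g$ and the decay of $\mu$ cannot be converted into a bound in terms of $\varepsilon/\Vert g\Vert_B$ with constants independent of $r$ and $g$. (A smaller point: your displayed inclusion goes the wrong way --- the linearization estimate is applied to $\partial_{\beta-e_j}g$ on balls where $|\partial_\beta g|$ is \emph{large}, not where $|\partial_{\beta-e_j}g|$ is small.)
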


This proposition is the key to establishing the theorems stated in \cref{mainresults} and requires new technical ideas. We will prove it in the following sections. When $\nu=\infty$, \cref{mainp} was proved in \cite[Section 7]{KLW}. One of the crucial facts used in \cite{KLW} was the Mean Value Theorem. Since there is no Mean Value Theorem in the $p$-adic setting, we have come up with a different proof.

The following lemma gives a sufficient condition written in terms of $\mu$, for $\f_\star\mu$ to be friendly.

\begin{lemma}\label{lemmmm}
Let $\mu$ be a $D$-Federer measure on an open subset $U$ of $\Q_\nu^d,$ let $\f:U \to \Q_\nu^n$ and $C,K,\alpha>0$ be such that
\begin{itemize}
    \item for any $\bx_1,\bx_2\in U$ one has that
    $$
    \frac{1}{K}\dist(\bx_1,\bx_2)\leq \dist(\f(\bx_1),\f(\bx_2))\leq K \dist(\bx_1,\bx_2);
    $$
    \item any $f\in \mathcal{L}_{\f}$ is absolutely $(C,\alpha)$-good on $U$ with respect to $\mu$, where $\mathcal{L}_{\f}$ as in \cref{vb1}.
\end{itemize}
Then $\f_\star\mu$ is friendly.
\end{lemma}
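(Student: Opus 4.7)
The plan is to verify the three defining properties of friendliness for $\f_\star\mu$: nonplanarity, the Federer property, and $(C',\alpha)$-decaying, each on a suitable neighborhood of a generic point. Since the bi-Lipschitz hypothesis makes $\f$ a homeomorphism $U\to \f(U)$, one has $\supp(\f_\star\mu)=\f(\supp\mu)$, and every generic point is $\f(\bx)$ with $\bx\in\supp\mu$. The whole computation will rest on the elementary ball inclusion
\[
B(\bx,r/K)\cap U\;\subset\;\f^{-1}\bigl(B(\f(\bx),r)\bigr)\;\subset\;B(\bx,Kr)\cap U,
\]
valid whenever $B(\bx,Kr)\subset U$, which by pushforward gives $\mu(B(\bx,r/K))\le \f_\star\mu(B(\f(\bx),r))\le \mu(B(\bx,Kr))$.

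To establish the Federer property for $\f_\star\mu$ at $\f(\bx)$, I would chain $\f_\star\mu(B(\f(\bx),3r))\le \mu(B(\bx,3Kr))$ with $\f_\star\mu(B(\f(\bx),r))\ge \mu(B(\bx,r/K))$ and iterate the $D$-Federer property of $\mu$ a number $m=O(\log K)$ of times to control the ratio $\mu(B(\bx,3Kr))/\mu(B(\bx,r/K))\le D^m$. For the decay property, any affine $h:\Q_\nu^n\to\Q_\nu$ pulls back to $h\circ\f\in \mathcal{L}_\f$, which is absolutely $(C,\alpha)$-good on $U$ by hypothesis. For $B'=B(\f(\bx),r)$ and $\varepsilon>0$,
\[
\f_\star\mu\bigl(\{\by\in B':|h(\by)|_\nu<\varepsilon\}\bigr)\le \mu\bigl(\{\bx'\in B(\bx,Kr):|(h\circ\f)(\bx')|_\nu<\varepsilon\}\bigr)\le C\left(\frac{\varepsilon}{\|h\circ\f\|_{B(\bx,Kr)}}\right)^{\!\alpha}\mu(B(\bx,Kr)).
\]
Because $\f^{-1}(B')\cap\supp\mu\subset B(\bx,Kr)$, the sup-norm estimate $\|h\circ\f\|_{B(\bx,Kr)}\ge \|h\|_{\f_\star\mu,B'}$ holds, and combining this with the Federer chain reduces the bound to $C'(\varepsilon/\|h\|_{\f_\star\mu,B'})^\alpha \f_\star\mu(B')$, yielding the required (non-absolute) $(C',\alpha)$-decay of $\f_\star\mu$ on a neighborhood of $\f(\bx)$.

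For nonplanarity, any affine hyperplane $\mathcal{L}\subset\Q_\nu^n$ is cut out by a nonzero affine $h$, and $\f^{-1}(\mathcal{L})=\{h\circ\f=0\}$. Applying the good property of $h\circ\f\in\mathcal{L}_\f$ on any ball $B\subset U$ centered in $\supp\mu$ with $\|h\circ\f\|_B>0$, and letting $\varepsilon\to 0^+$, forces $\mu(\{h\circ\f=0\}\cap B)=0$; covering $\supp\mu$ by countably many such balls (which exists provided $h\circ\f$ does not vanish identically on any neighborhood of any point of $\supp\mu$, a property guaranteed in the intended applications by the nondegeneracy of $\f$) then gives $\f_\star\mu(\mathcal{L})=0$.

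The main obstacle I expect is the bookkeeping of constants: the bi-Lipschitz distortion $K$ must be simultaneously absorbed into the Federer chain and into the decay constant so as to produce uniform $C', D', \alpha$ on a common neighborhood of each generic point; and the sup-norm comparison $\|h\circ\f\|_{B(\bx,Kr)}\ge \|h\|_{\f_\star\mu,B'}$ is precisely what allows the absolutely good hypothesis to downgrade correctly to the (non-absolute) goodness demanded by the definition of a friendly measure.
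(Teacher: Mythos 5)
Your argument is correct and is essentially the proof the paper relies on: the paper gives no independent proof of this lemma but defers to \cite[Lemma~7.1]{KLW} "verbatim", and that argument is exactly your ball comparison $B(\bx,r/K)\subset\f^{-1}(B(\f(\bx),r))\subset B(\bx,Kr)$ combined with iterating the $D$-Federer property $O(\log K)$ times and pulling affine functionals on $\Q_\nu^n$ back to elements of $\mathcal{L}_{\f}$, with the sup-norm comparison $\Vert h\circ\f\Vert_{B(\bx,Kr)}\geq \Vert h\Vert_{\f_\star\mu,B'}$ downgrading absolute goodness to the non-absolute decay required of a friendly measure. The caveat you flag for nonplanarity --- that a nonzero affine $h$ could have $h\circ\f\equiv 0$ on a ball, in which case absolute goodness yields nothing --- is a genuine edge case of the statement as written, and it is indeed supplied in the paper's application by the nondegeneracy of $\f$, exactly as you note.
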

When $\nu=\infty$ the above lemma was proved in of \cite[Lemma~7.1]{KLW} and the same proof applies verbatim when $\nu$ is a prime number.

\subsection{ Proof of \cref{ff} using \cref{mainp} and \cref{lemmmm}}
Since $\f$ is a nonsingular $C^{l+1}$ map at $\bx_0$, we can find a neighbourhood $V$ of $\bx_0$ such that $\f|_{V}$ is bi-Lipschitz; see \cref{bi}. This satisfies the first condition of \cref{lemmmm}. By \cref{mainp}, we have that $\f$ and $\mu$ satisfy the second condition of \cref{lemmmm}. Therefore, by \cref{lemmmm} we have that $\f_\star\mu$ is friendly.

\subsection{ Proof of \cref{FP} using \cref{ff}}
\cref{FP} follows by combining  \cref{simin} and \cref{ff} and \cite[Corollary 11.2]{KT}
\subsection {Proof of \cref{DG} using \cref{mainp}}
We first state the following theorem which is \cite[Corollary $6.2$]{DG2}.
\begin{theorem}[{\cite[Corollary $6.2$]{DG2}}]
Let $\mu$ be a Federer measure on a Besicovitch metric space $X$, $\mathcal{L}$ an affine subspace of $\Q_\nu^n$, and let $\f : X\to \mathcal{L}$ be a continuous map such that $(\f, \mu)$ is good and nonplanar in $\mathcal{L}$. Then
\begin{equation}
\omega(\f_*\mu)= \omega(\mathcal{L})= \inf\{ \omega(\by)~|~\by\in\mathcal{L}\}= \inf\{\omega(\f(x)) ~|~ x\in\supp\mu\}.
\end{equation}
\end{theorem}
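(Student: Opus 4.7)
My plan is to derive the four-way equality from the single central identity $\omega(\f_*\mu)=\omega(\mathcal{L})$. Under the conventions of \cite{DG2}, $\omega(\mathcal{L})=\inf\{\omega(\by):\by\in\mathcal{L}\}$ is part of the definition (or an easy general fact), yielding the second equality. For the fourth, $\f(\supp\mu)\subset\mathcal{L}$ gives $\inf\{\omega(\by):\by\in\mathcal{L}\}\leq\inf\{\omega(\f(x)):x\in\supp\mu\}$; the reverse follows from the central identity, since $\omega(\f(x))=\omega(\mathcal{L})$ for $\mu$-a.e.\ $x$, pinning down the infimum over $\f(\supp\mu)$ at $\omega(\mathcal{L})$.

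The inequality $\omega(\f_*\mu)\geq\omega(\mathcal{L})$ is immediate because $\supp\f_*\mu\subset\mathcal{L}$. For the substantive reverse inequality, I would run a convergence Borel--Cantelli argument. Fix $\delta>0$ and for $(q_0,\bq)\in\Z^{n+1}$ set
$$A_{q_0,\bq}:=\bigl\{x\in\supp\mu:|q_0+\bq\cdot\f(x)|_\nu<\|\bq\|^{-\omega(\mathcal{L})-\delta}\bigr\}.$$
Let $B$ be a ball on which $(\f,\mu)$ is simultaneously $(C,\alpha)$-good and nonplanar in $\mathcal{L}$. Since $g(x):=q_0+\bq\cdot\f(x)$ lies in $\mathcal{L}_\f$, the good property yields
$$\mu(B\cap A_{q_0,\bq})\leq C\left(\frac{\|\bq\|^{-\omega(\mathcal{L})-\delta}}{\|g\|_{\mu,B}}\right)^{\alpha}\mu(B).$$
After patching these local estimates over a Besicovitch cover of $\supp\mu$ and using the Federer regularity of $\mu$, I would sum dyadically over $\|\bq\|\asymp T$ and then over $T$. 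If this total $\mu$-mass is finite, Borel--Cantelli gives $\mu(\limsup A_{q_0,\bq})=0$, hence $\omega(\f(x))\leq\omega(\mathcal{L})+\delta$ for $\mu$-a.e.\ $x$, and letting $\delta\to 0$ finishes the argument.

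The main obstacle is converting the qualitative nonplanarity of $(\f,\mu)$ in $\mathcal{L}$ into a quantitative lower bound for $\|g\|_{\mu,B}$ uniform in $\bq$. Specifically, one needs $\|g\|_{\mu,B}$ to be comparable to the distance from $(q_0,\bq)$ to the integer annihilator of $\mathcal{L}$ in $\Q_\nu^{n+1}$; only this alignment matches the dyadic count of $(q_0,\bq)$ at each scale with the Dirichlet exponent $\omega(\mathcal{L})$ of $\mathcal{L}$. Nonplanarity gives $\|g\|_{\mu,B}>0$ whenever $(q_0,\bq)$ is not such an annihilator, but upgrading this to a scale-uniform estimate is where the Federer property, the Besicovitch covering, and the affine structure of $\mathcal{L}$ must combine in a nontrivial way, as is carried out in \cite[Corollary~6.2]{DG2}.
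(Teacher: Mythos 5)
First, note that the paper does not prove this statement at all: it is imported verbatim as \cite[Corollary~6.2]{DG2}, so there is no internal proof to match your argument against. Your reconstruction must therefore stand on its own, and it does not.

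The decisive gap is quantitative, not merely the acknowledged difficulty of lower-bounding $\|g\|_{\mu,B}$. Even granting a perfect, scale-uniform lower bound on $\|g\|_{\mu,B}$, the convergence Borel--Cantelli argument you propose cannot reach the exponent $\omega(\mathcal{L})+\delta$. The $(C,\alpha)$-good estimate gives $\mu(B\cap A_{q_0,\bq})\ll \|\bq\|^{-(\omega(\mathcal{L})+\delta)\alpha}$, and summing over the $\asymp T^{n}$ vectors $\bq$ with $\|\bq\|\asymp T$ (and the admissible $q_0$) produces a series that converges only when $(\omega(\mathcal{L})+\delta)\alpha$ exceeds roughly $n+1$. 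Since $\alpha$ is small (in this paper it is $\alpha/(2^{l+1}-2)$, and in general it cannot be taken close to $1$ for non-Lebesgue friendly measures), this threshold is far above $\omega(\mathcal{L})$, so the sum diverges at every exponent you care about and Borel--Cantelli yields nothing. This exponent loss is exactly why results of this type are not proved by direct counting: the actual proof of \cite[Corollary~6.2]{DG2}, following Kleinbock's scheme, passes through the Dani correspondence and the quantitative nondivergence theorem on the space of lattices, where the $(C,\alpha)$-good property controls the measure of excursions into cusp neighbourhoods in a way that does not degrade the final Diophantine exponent with $\alpha$. Nonplanarity enters there to compare $\sup$-norms of the relevant lattice functions over $B\cap\supp\mu$ with those over $\mathcal{L}$, which is how $\omega(\f_*\mu)$ gets pinned to $\omega(\mathcal{L})$ rather than to the generic exponent $n$. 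Your first paragraph (deducing the second and fourth equalities from the central identity) is essentially fine, as is the easy inequality $\omega(\f_*\mu)\ge\omega(\mathcal{L})$, but the core inequality is not obtainable by the route you sketch.
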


\begin{remark}
Let $\mu$ be a
friendly measure on $\Q_\nu^d$, and let $\f :\Q_\nu^d \to \cL$ be an affine isomorphism. Then by the above theorem, 
$\omega (\f_\star\mu)=\omega(\cL).$
\end{remark}

Since in \cref{DG}, $\f$ is nondegenerate in $\cL$ at $\mu$ almost all points, 
we get that $(\f,\mu)$ is nonplanar in $\cL$. If not, then there exists a ball $B$ such that $\mu(B)>0$ and an affine subspace $\mathcal{L}'$, $\mathcal{L}'\subset \mathcal{L}$ such that $\f(B\cap\supp\mu)\subset \mathcal{L}'.$ Let $\bx\in B\cap \supp{\mu}$ such that all partial derivatives of $\f$ at $\bx$ up to order $l$ spans the linear part of $\mathcal{L}.$ But the fact $\f(B\cap\supp\mu)\subset \mathcal{L}',$ implies all derivatives at $\bx$ must lie on the linear part of $\mathcal{L}'$ which is proper in $\mathcal{L}$ giving a contradiction.

Therefore, \cref{DG} follows from combining \cref{mainp} and the above theorem. The rest of the paper is spent on proving \cref{mainp}.

\section{Notes on the proof of \cref{mainp}}\label{Sketch}

In this section, we consider the simplest nontrivial situation in which we attempt to explain the obstacles we face and the ideas we use in the general case, where the obstacles and ideas are less transparent. Recall that to validate \cref{mainp} we need to demonstrate that under certain conditions on the measure $\mu$ and the map $\f=(f_1,\cdots,f_n):\Q_\nu^d\to\Q_\nu^n$, there exists a neighborhood $V$ such that all linear combinations of $1,f_1,\cdots,f_n$ are good w.r.t $\mu$, a concept stated in \cref{sec3.5}.

\subsection{Sufficient conditions}
The task is that given any function $g:B\subset\Q_\nu^d\to \Q_\nu$, defined on a ball $B$ of radius $r$, we want to find sufficient conditions on $g$, such that $g$ is good in this ball $B$ w.r.t $\mu$; see \cref{good}. It is too ambitious to hope that the same conditions as in the real case, \cite[Theorem 7.6]{KLW} would be enough. Next we discuss why this is so.
 \begin{itemize}
     \item Let us constrain ourselves to the $d=1$ case. The lemmas \cite[Lemma 7.4]{KLW} and \cite[Lemma 7.5]{KLW}, which are crucial in \cite[Theorem 7.6]{KLW} are not clear in the $\Q_\nu$ setting. For instance, in \cite[Lemma 7.4]{KLW} it was shown that for any $C^1$ function on ball $B\subset\R$ of radius $r$, we have that
     $$ r\inf\limits_{x\in B} \vert g'(x)\vert \ll \sup\limits_{x\in B}\vert g(x)\vert.$$ This is a direct implication of the Mean Value Theorem.
     \item However, the above is not true\footnote{We are thankful to an anonymous referee who suggested the example we provide here.} in $\Q_\nu$. For instance, we may take $B=\Z_\nu$, and $g_N(\sum_{i\geq 0} a_i\nu^i)=\sum_{i\geq N} a
_i \nu^i$ for $N\in \N.$ Then $g_N$ differes from the identity map by a locally constant map. Hence for every $x\in\Z_\nu,$ $\vert g'_{N}(x)\vert_\nu=1,$ whereas $\sup_{x\in B} \vert g_{N}(x)\vert_\nu\leq \nu^{-N}$, by definition. Hence for sufficiently large $N$, the above inequality is false. Nevertheless, we can use the definition of $C^k$ functions; see \cref{function} in order to prove a weaker statement; see \cref{lemn}. In particular, for any $C^1$  function defined on a ball $B\subset\Q_\nu$ of radius $r$,  we show that
     $$ r \inf\limits_{(x,y)\in B\times B} \vert \bar\Phi^1 g(x,y)\vert_\nu\ll \sup\limits_{x\in B}\vert g(x)\vert_\nu.$$
     \item For the same reason as above our \cref{lem2} is weaker than \cite[Lemma 7.5]{KLW}.
     \item Other than having two weaker lemmas, there is another issue that we face. In the real case, we have that $(g')'=g''$ for any $C^2$ function. But in case of any $C^2$ function $g$ defined on $\Q_\nu$, we have instead that
         $$
     \bar\Phi^1 g'(x,y)=\bar\Phi^2 g(x,x,y)+\bar\Phi^2 g(x,y,y).
     $$
     Hence it is not immediately clear what sort of relation we have between $$ \inf\limits_{(x,y,z)\in B\times B\times B}\vert \bar\Phi^2 g(x,y,z)\vert_\nu \quad\text{and}\quad \inf \limits_{(x,y)\in B\times B}\vert \bar\Phi^1 g'(x,y)\vert_\nu.$$
     Since the first one is defined over a larger domain, one would expect that the first quantity should be less than the second quantity upto some constant multiple. But the second quantity is the infimum of a sum of two functions, hence there is a possibility that it becomes $0$.
 \end{itemize}
 To tackle the aforementioned obstacles we impose three conditions on $g$ in \cref{good}, namely \cref{eq0}, \cref{eq1b} and \cref{upper}. Since these conditions are a bit technical, we refrain from stating them here, except for the most important one namely  \cref{eq0} which is discussed below. The latter two conditions showed up, albeit in a weaker sense, in the real setting. The first condition \cref{eq0} however, is entirely new and an important idea in our paper. No analogous version was needed in the proof of the real case. The second and third conditions are stronger than the respective conditions in the real case, \cite[Theorem 7.6]{KLW}. \\
 \indent As we prove the main theorem \cref{ff} with no extra assumption than those are present in the real case, we need to show that these stronger sufficient conditions in \cref{good} are satisfied by any linear combination of $1,f_1,\cdots,f_n$.

\subsection{The first condition} The first condition \cref{eq0} in \cref{good} is the hardest to check among the three conditions in the theorem. In the case $d=1$ and $C^k$ function $g$, note that the first condition is to check a chain of inequalities as follows:
 \begin{equation}\label{chainstar}\inf \vert\bar\Phi^k g \vert_\nu \ll\inf \vert \bar\Phi^{k-1}g'\vert_\nu\ll \cdots
    \ll \inf\vert \bar\Phi^{1}g^{k-1}\vert_\nu \ll \inf\vert g^{k}\vert_\nu.\end{equation}
Here and elsewhere in this section we adopt the convention that
$$\inf\vert\bar\Phi^j g\vert_\nu:=\inf\limits_{(x_1,\cdots,x_{j+1})\in B\times \cdots\times B}\vert\bar\Phi^j g(x_1,\cdots,x_{j+1})\vert_\nu\qquad\text{for $j=0,\dots, k$.}
$$

First, we show that polynomials satisfy all three conditions; see \cref{poly}.
We use the trick of `proof by maximality', which is also used in \cite[Lemma 7.7]{KLW}. But the proof in \cite[Lemma 7.7]{KLW} uses the Mean Value Theorem and there were only two weaker conditions to check unlike our case, where we need to verify three stronger conditions.

\subsection{A very simple case of polynomials}
Let us sketch how we handle the simplest nontrivial case of polynomials, where three conditions in \cref{good} are satisfied. Let $P$ be a polynomial over $\Q_\nu$ in one variable such that $\sup_{x\in B(0,1)}\vert P(x)\vert_\nu=1$. Suppose $k$ is the largest integer such that there exists $(x_1,\cdots,x_{k+1})\in B(0,1)^{k+1}$ with
$$
\vert \bar\Phi^{k} P(x_1,\cdots,x_{k+1})\vert_\nu>\frac{1}{\nu^{6\alpha+1}},
$$ where $\alpha>0$ is carefully chosen. Such a $k$ will exist since we started with a `normalized' polynomial. Now for simplicity, let $k=2$. Hence there exists $(a,b,c)\in B(0,1)^3$ such that
$$\vert \bar\Phi^2 P(a,b,c)\vert_\nu>\frac{1}{\nu^{6\alpha+1}}.$$ Then it is easy to see that for any $(x,y,z)\in B(0,1)^3$, $$ \bar\Phi^2 P(x,y,z)-\bar\Phi^2 P(a,b,c)= \bar\Phi^3 P(x,y,z,a)(z-a)+\bar\Phi^3 P(x,y,a,b)(y-b)+\bar\Phi^3 P(x,a,b)(x-c).$$ Now we use ultrametric norm property to conclude that for any $(x,y,z)\in B(0,1)^3$,
$$ \vert \bar\Phi^2 P(x,y,z)\vert_\nu\leq \max\{ \vert \bar\Phi^2 P(a,b,c)\vert_\nu, \vert \bar\Phi^3 P(x,y,z,a)\vert_\nu, \bar\Phi^3 P(x,y,a,b)\vert_\nu, \vert\bar\Phi^3 P(x,a,b)\vert_\nu\}.$$ By maximality of $k=2$, the max in the above should be attained by $\vert \bar\Phi^2 P(a,b,c)\vert_\nu$, giving

$$\vert \bar\Phi^2 P(x,y,z)\vert_\nu>\frac{1}{\nu^{6\alpha+2}}.$$
This verifies the second condition \cref{eq1b}.
Next we claim,
$$ \inf\vert \bar\Phi^2 P\vert_\nu\ll  \inf\vert \bar\Phi^1 P'\vert_\nu.$$
The following simple observation is key for verifying the first inequality in \cref{chainstar}. For any $(x,y,z)\in B(0,1)^3$, we have that
\begin{equation}
2\bar\Phi^2 P(x,y,z)=\bar\Phi^1 P'(x,y)+\bar\Phi^3 P(x,x,y,z)(z-x)+\bar\Phi^3 P(x,y,y,z)(z-y).
\end{equation}
Now let $\inf\vert\bar\Phi^1P'\vert_\nu=\vert\bar\Phi^1P'(c,d)\vert_\nu$ for some $c,d\in B(0,1).$
For any $z\in B(0,1),$
$$\vert 2 \bar\Phi^2 P(c,d,z)\vert_\nu\leq \max\{\vert \bar\Phi^1 P'(c,d)\vert_\nu, \vert\bar\Phi^3 P(c,c,d,z)\vert_\nu, \vert\bar\Phi^3 P(c,d,d,z)\vert_\nu\}.$$
If for some $z\in B(0,1), \vert \bar\Phi^1 P'(c,d)\vert_\nu\geq \max\{\vert\bar\Phi^3 P(c,c,d,z)\vert_\nu, \vert\bar\Phi^3 P(c,d,d,z)\vert_\nu\}$ then we are done. If not, then for every $z$,
$$ \vert 2 \bar\Phi^2 P(c,d,z)\vert_\nu\leq \vert\bar\Phi^3 P(c,c,d,z)\vert_\nu, \text{ or }  \vert 2 \bar\Phi^2 P(c,d,z)\vert_\nu\leq \vert\bar\Phi^3 P(c,d,d,z)\vert_\nu.$$ We choose $\alpha$ such that  either case will contradict the maximality.

\subsection{Approximations by polynomials}
Once we show that polynomials satisfy all three conditions in \cref{good}, we approximate difference quotients of functions of interest by difference quotients of polynomials in a uniform manner; see \cref{taylor}.

Consider the $d=1$ case. We show the following for any $C^{l+1}$ map $f$ defined on $B(y,r)\subset B(0,1)$, and for every $k\leq l$, \begin{equation}\label{star1}
\begin{aligned}
&\sup_{\substack{z_i\in B(y,r)\\
i=1,\cdots,k+1}} \vert \bar\Phi^k f(z_1,z_2,\cdots,z_{k+1})-\bar \Phi^k P_{f, y, l}(z_1,z_2,\cdots,z_{k+1})\vert_\nu \\
& \leq r^{l-k} \sup_{\substack{x_i,y_i\in B(y,r),\\
i=1,\cdots,l+1}} \vert \bar\Phi^l f(x_1,x_2,\cdots,x_{l+1})-\bar \Phi^l f(y_1,y_2,\cdots,y_{l+1})\vert_\nu.
\end{aligned}
\end{equation}
Here the $l$-th Taylor polynomial of $f$ at $y\in B$ is denoted as $$P_{f,y,l}(x)=f(y)+\bar \Phi^1f(y,y) (x-y)+\cdots+\bar\Phi^{l}f(y,\cdots,y)(x-y)^l.$$
We prove \cref{star1} by induction on $l$. The following is typically refered as ``Taylor's theorem'' in $\Q_\nu$,
\begin{equation}\label{taylorstar}
f(x)=P_{f,y,l}(x)+ \bar\Phi^{l+1}f(x,y,\cdots,y)(x-y)^{l+1},
\end{equation} which one can find in a simple manner, \cite[Theorem $29.4$]{Sc}.
In order to prove \cref{star1} by induction, we consider the function  $f_z(x):=\bar\Phi^1f(x,z)$ restricting one coordinate. We explore relations between $\bar\Phi^i P_{f,y,l}$ with $\bar\Phi^i P_{f_z,y,l}$; see \cref{chain rule}, which we then apply to prove \cref{taylor}.

In higher dimension, one would expect to have an analogue of \cref{taylorstar}, but we could not find any reference where the error term is explicitly calculated, which we require. So in an appendix \cref{appen} we prove Taylor's theorem in higher dimension with explicitly given error term.

\section{Good functions w.r.t. absolutely decaying measures}

\subsection{Notation}\label{notation}
First of all, in what follows if there is no risk of confusion, we will omit the subscript $\nu$ from the notation of $\nu$-adic absolute value, e.g. given $x\in\Q_\nu$, we will write $|x|$ for $|x|_\nu$. It will be clear from the context which absolute value we use. Throughout, the norm $\|\cdot\|$ on $\Q_\nu^i$ with $i\in\N$ induced by the $\nu$-adic absolute value will be the supremum norm. For any multi-index $\beta=(i_1,\cdots,i_d)$ we denote $|\beta|:=i_1+\cdots+i_d$ by abuse of notation.

Next, given a ball $B$ in $\Q_\nu$, $B^i:=B\times\cdots\times B\subset\Q_{\nu}^i$ will denote the product of $i$ copies of $B$ and $\by\in B^i$ will be written as $(y_j)$, thus $\by=(y_j)\in B^i$. We set $e_i=(0,\cdots,0,1,0,\cdots,0)\in \Z^d$, where $1$ is at the $i$-th place. Given a multi-index $\beta=(i_1,\ldots,i_d)$, we let $\beta(1):=\beta+\sum_{i=1}^d e_i$, thus $\beta(1)=(i_1+1,\ldots,i_d+1)$. We denote by $B(\by,r):=\{\bx~|~ \Vert \bx-\by\Vert\leq r\}$ a ball centred at $\by$ of radius $r$. Given a multi-index $\alpha=(i_1,\cdots,i_d)$, we also let $\mathbf{B}^{\alpha}=B(\by,r)^{\alpha}:=B(y_1,r)^{i_1}\times\cdots\times B(y_d,r)^{i_d}$ and denote by $\bx_{\alpha}$ the elements of $\mathbf{B}^{\alpha}$, thus $\bx_{\alpha}=(\bx_{1},\cdots,\bx_{d})\in \mathbf{B}^{\alpha}$, where $\bx_{j}\in B(y_j,r)^{i_j}$. For a function $g: X\to \Q_\nu$, we set
$$
\inf_{X}\vert g\vert:=\inf_{x\in X}\vert g(x)\vert.
$$
Similar notation will be used for supremum.
Finally, given $y\in\Q_\nu$ and $k\in \N$, we let $y^k:=(y,\cdots,y)\in \Q_\nu^k$ and, given a multi-index $\beta=(i_1,\cdots,i_d)$, we let $\bar{\by}_\beta:=(y_1^{i_1+1},\cdots,y_d^{i_d+1})$, thus $\bar{\by}_\beta$ is an element of $\Q_\nu^{i_1+1}\times \dots\times\Q_\nu^{i_d+1}$.
For a muti index $\beta=(i_1,\cdots,i_d)$, let us recall
$\beta ! := \prod_{j = 1}^d i_j!.$

\begin{lemma}\label{lemn}
    Let $B$ be a ball in $\Q_\nu$ of radius $r:=\nu^{-t}$, where and $t\in\N$. Suppose that  $f: B^d \to \Q_\nu$ is a $C^l$ map. Then for any multi-index $\beta=(i_1,\cdots,i_d)$ with $|\beta|:=i_1+\cdots+i_d\leq l$ we have that
    \begin{equation}\label{lemneq}
     \inf_{\nabla^{i_1}B\times\dots\times\nabla^{i_d}B}\vert\Phi_{\beta}f\vert\leq {\vert \beta!\vert ^{-1}}r^{-\vert\beta\vert}\Vert f\Vert_{B^d}\,,
\end{equation}
where $\vert\beta!\vert$ is the $\nu$-adic norm of $\beta!$, and $\Vert f\Vert_{B^d}=\sup_{\bx\in B^d}\vert f(\bx)\vert$.
\end{lemma}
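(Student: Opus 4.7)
The plan is to establish the bound via the classical divided-difference identity
\[
\Phi^m g(y_0, \dots, y_m) \;=\; \sum_{i=0}^{m} \frac{g(y_i)}{\prod_{j \ne i}(y_i - y_j)},
\]
which holds in any field and can be derived inductively from the recursive definition of $\Phi^m$, combined with the ultrametric triangle inequality. The key idea is to exhibit, in the given ball $B$, a tuple of evaluation points at which every denominator $\prod_{j \ne i}(y_i - y_j)$ has absolute value equal to $r^m$. To produce such a tuple in the one-variable case $d=1$ with $\beta = m$, I would write $B = a + \pi^t \mathcal{O}_\nu$ with $\pi$ a uniformizer of $\Q_\nu$ and $\mathcal{O}_\nu$ the valuation ring, and select $c_0, \dots, c_m \in \mathcal{O}_\nu$ lying in $m+1$ distinct residue classes modulo $\pi$. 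Setting $y_i := a + \pi^t c_i$, each $c_i - c_j$ with $i \ne j$ is a unit, so $|y_i - y_j|_\nu = |\pi^t|_\nu = r$.

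Substituting this choice into the divided difference identity and using the non-archimedean triangle inequality would give
\[
|\Phi^m g(y_0, \dots, y_m)|_\nu \;\le\; \max_{0 \le i \le m}\frac{|g(y_i)|_\nu}{r^m} \;\le\; r^{-m}\,\|g\|_B,
\]
proving the one-variable case. For a general multi-index $\beta = (i_1, \dots, i_d)$ with $|\beta|\le l$, I would iterate the single-variable identity along each coordinate to obtain
\[
\Phi_\beta f(\bx_1, \dots, \bx_d) \;=\; \sum_{\substack{0 \le k_s \le i_s \\ 1 \le s \le d}} \frac{f(x_{1,k_1},\dots,x_{d,k_d})}{\prod_{s=1}^d \prod_{j \ne k_s}(x_{s,k_s} - x_{s,j})},
\]
where $\bx_s = (x_{s,0}, \dots, x_{s,i_s})$. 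Selecting each block $\bx_s$ to consist of $i_s+1$ pairwise $r$-equidistant points in $B$ exactly as above, every denominator then has absolute value $\prod_s r^{i_s} = r^{|\beta|}$, and a second appeal to the ultrametric triangle inequality produces~\eqref{lemneq}.

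The hard part will be ensuring that $i_s+1$ pairwise $r$-equidistant points actually exist in $B$: this requires the residue field of $\Q_\nu$ to have cardinality at least $\max_s(i_s+1) \le l+1$. When $\Q_\nu = \Q_p$ with $p \le l$ the clean construction above breaks down, and I expect the argument to proceed instead by induction on $|\beta|$ using the recursion $\Phi^m g(y_0, \dots, y_m) = \bigl(\Phi^{m-1}g(y_0,y_2,\dots,y_m) - \Phi^{m-1}g(y_1,y_2,\dots,y_m)\bigr)/(y_0-y_1)$, selecting at each stage two tuples whose distinguished coordinates differ by exactly $r$ and invoking the inductive bound on the lower-order quotient. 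This small-residue-field regime is where the combinatorial bookkeeping becomes delicate; the rest of the proof reduces to the ultrametric estimate on each layer, which is why the divided difference identity plus the non-archimedean inequality are the essential ingredients throughout.
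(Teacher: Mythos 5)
Your main route---the Lagrange partial-fraction form of the divided difference evaluated at pairwise $r$-equidistant nodes---is genuinely different from the paper's. The paper instead evaluates $\Phi_\beta f$ at the arithmetic progressions $x_j,\,x_j+\nu^t,\dots,x_j+i_j\nu^t$ and uses the Newton forward-difference identity, so that the numerator becomes an integer linear combination of values of $f$ and is bounded by $\Vert f\Vert_{B^d}$ via the ultrametric inequality. Where your construction applies---whenever the residue field has at least $l+1$ elements, in particular for $\Q_p$ with $p>l$---your argument is complete and arguably more transparent, since every denominator visibly has absolute value exactly $r^{|\beta|}$.

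The genuine gap is exactly where you place it, but it is worse than ``delicate bookkeeping.'' First, the inductive fallback you sketch cannot work as described: the inductive hypothesis is an infimum statement (there exists \emph{one} good $(m-1)$-tuple), whereas a single step of the recursion requires the bound at \emph{two} prescribed, overlapping $(m-1)$-tuples sharing the coordinates $y_2,\dots,y_m$, and nothing furnishes a second good tuple. Second, and decisively, the inequality \eqref{lemneq} fails as stated when $p\le|\beta|$: take $\nu=2$, $B=2\Z_2$ (so $r=1/2$), $d=1$, $\beta=2$ and $f(x)=x(x-2)/2$. Then $\Phi^2f$ is identically the leading coefficient $1/2$, so $|\Phi^2f|=2$ everywhere, while $\Vert f\Vert_B=1/4$ and hence $r^{-2}\Vert f\Vert_B=1<2$. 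The obstruction is a factor of $|\beta!|_\nu$: at equally spaced nodes the forward-difference identity reads $\Phi_j^kf=D_j^kf/(k!\,\nu^{kt})$, where $D_j$ is the forward difference with step $\nu^t$ (the paper's corresponding display omits the $k!$), and the sharpest bound extractable from either your identity or the paper's is $\inf|\Phi_\beta f|\le |\beta!|_\nu^{-1}\,r^{-|\beta|}\Vert f\Vert_{B^d}$. So rather than hunting for a cleverer choice of nodes when $p\le l$, you should prove the lemma with the extra constant $|\beta!|_\nu^{-1}$ (harmless downstream, where only the existence of some constant matters)---or restrict to $p>l$, where your argument already closes.
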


\begin{proof}
Let $\bx=(x_1,\cdots,x_d)\in B^d$ and for $j=1,\cdots,d$ define $$D_{j}f(\bx):=f(x_1,\cdots,x_{j-1}, x_j+\nu^t,x_{j+1},\cdots, x_d)-f(\bx).$$ Suppose that $x\in B$, then $x+k\nu^t\in B$ for any integer $k\geq 1$. Then for any $\bx\in B^d,$
\begin{equation}\label{l1}
  \Phi_j^kf(x_1,\cdots, x_{j-1},x_j,x_j+\nu^t,\cdots,x_j+k\nu^t,x_{j+1},\cdots,x_d)=\frac{1}{k!\nu^{kt}} D_j^kf(\bx).
  \end{equation}
For $k=1$, the above equality follows from definition. Suppose, by the induction hypothesis, that the equality is true for $k-1\in\N$. This means that for any $\bx\in B^d$, \begin{equation*}\begin{aligned}
  &\Phi_j^{k-1}f(x_1,\cdots, x_{j-1},x_j,x_j+\nu^t,\cdots,x_j+(k-1)\nu^t,x_{j+1},\cdots,x_d)=\frac{1}{(k-1)!\nu^{(k-1)t}} D_j^{k-1}f(\bx),\\
  & \text{ and } \Phi_j^{k-1}f(x_1,\cdots, x_{j-1},x_j+\nu^t,\cdots,x_j+k\nu^t,x_{j+1},\cdots,x_d)\\
  & \vspace{3 in}=\frac{1}{(k-1)!\nu^{(k-1)t}}D_j^{k-1}f(x_1,\cdots,x_{j-1},x_j+\nu^t,x_{j+1},\cdots, x_d).
  \end{aligned}
  \end{equation*} Now using the two above equalities and definitions,  
  $$
  \begin{aligned}
     &k\nu^t\Phi_j^kf(x_1,\cdots, x_{j-1},x_j,x_j+\nu^t,\cdots,x_j+k\nu^t,x_{j+1},\cdots,x_d)\\&=\Phi_j^{k-1}f(x_1,\cdots, x_{j-1},x_j+\nu^t,\cdots,x_j+k\nu^t,x_{j+1},\cdots,x_d)-\\ & \vspace{3 in} \Phi_j^{k-1}f(x_1,\cdots, x_{j-1},x_j,x_j+\nu^t,\cdots,x_j+(k-1)\nu^t,x_{j+1},\cdots,x_d)\\
     &= \frac{1}{(k-1)!\nu^{(k-1)t}} \left(D_j^{k-1}f(x_1,\cdots,x_{j-1},x_j+\nu^t,x_{j+1},\cdots, x_d)- D_j^{k-1}f(\bx)\right)\\
     & = \frac{1}{(k-1)!\nu^{(k-1)t}} D_j^kf(\bx).
  \end{aligned}
  $$ Thus \cref{l1} is valid.

  Note that, by the ultrametric property, $\vert D_j^k f(\bx)\vert\leq \vert k!\vert^{-1} {\Vert f\Vert}_{B^d}$, where $\vert k!\vert$ is the $\nu$-adic norm of $k!\in \Z$. Hence taking norms on both sides of  \cref{l1} gives
  $$
  \vert k!\vert\nu^{-kt}\vert \Phi_j^kf(x_1,\cdots,x_{j-1},x_j,x_j+\nu^t,\cdots,x_j+k\nu^t,x_{j+1},\cdots,x_d)\vert\leq {\Vert f\Vert}_{B^d}.
  $$
Given a multi-index $\beta=(i_1,\cdots,i_d)$, let $D_\beta(\bx)=D_1^{i_1}\circ\cdots\circ D_d^{i_d}(\bx)$ and $\Phi_\beta=\Phi^{i_1}_1\circ\cdots\circ\Phi^{i_d}_d$. Therefore when $\vert \beta\vert =m$ we get that
    \begin{equation}\label{k1}
    D_\beta(\bx)=\beta!\nu^{mt}\Phi_\beta f(x_1,x_1+\nu^t,\cdots,x_1+i_1\nu^t,x_2,\cdots,x_2+i_2\nu^t,\cdots,x_d,x_d+\nu^t,\cdots,x_d+i_d\nu^t).
    \end{equation}
    Again, since $\vert D_\beta(\bx)\vert\leq \Vert f\Vert_{B^d}$, taking norms on both sides of \cref{k1} gives
    $$
     \inf_{\nabla^{i_1}B\times\cdots\times\nabla^{i_d}B}\vert\Phi_{\beta}f\vert\leq \vert \beta!\vert^{-1} r^{-\vert\beta\vert}\Vert f\Vert_{B^d}.$$

\end{proof}
 \begin{lemma}\label{lem2}
Let $C>0$ and $\alpha>0$ be given and let $\mu$ be an absolutely $(C,\alpha)$-decaying measure on $U\subset \Q_\nu^d$. Let $c,r,\varepsilon>0$, let $B(\by,r)\subset U$ be a ball centred at $\by=(y_j)\in \supp(\mu)$ and let $f:B(\by,r) \to \Q_\nu$ be a $C^l$ function with $l\geq 2$ such that
\begin{equation}\label{lower}
    \Vert \nabla f(\by)\Vert\geq c
\end{equation}
and
\begin{equation}\label{upper0}
    \Vert \bar\Phi_{\beta}f\Vert_{B(\by,r)^{\beta(1)}}\leq \frac{\varepsilon}{r^2}~~~~~ \qquad\text{for all multi-indices $\beta$ with } \vert\beta\vert=2\,,
\end{equation}
where $\beta(1)$ is defined in \cref{notation}. Then
\begin{equation}\label{local_estimate}
    \mu(\{ \bx\in B(\by,r)~ |~ \vert f(\bx)\vert<\varepsilon \})\leq C \left(\frac{\varepsilon}{cr}\right)^\alpha \mu(B(\by,r))\,.
\end{equation}
\end{lemma}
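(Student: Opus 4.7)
The plan is to reduce the problem to the absolutely $(C,\alpha)$-decaying property of $\mu$ applied to the affine Taylor approximation of $f$ at $\by$. Concretely, I would first establish a multivariate Taylor-type identity at order two:
$$f(\bx) \;=\; L(\bx) \;+\; R(\bx), \qquad L(\bx) := f(\by) + \sum_{i=1}^{d} \partial_i f(\by)\,(x_i - y_i),$$
where $R(\bx)$ is an integer-coefficient linear combination of terms of the form $(\bx-\by)^{\beta}\,\bar\Phi_\beta f(\xi_\beta)$ with $|\beta|=2$ and $\xi_\beta\in B(\by,r)^{\beta(1)}$. In one variable this follows immediately from iterating the definition of the difference quotient, since $f(x)-f(y)=(x-y)\bar\Phi^{1}f(x,y)$ and $\bar\Phi^{1}f(x,y)=f'(y)+(x-y)\bar\Phi^{2}f(x,y,y)$. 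For several variables I would telescope $f(\bx)-f(\by)$ along the axis-aligned polygonal path from $\by$ to $\bx$, writing each step as $(x_i-y_i)$ times a first-order difference quotient in the $i$-th variable, and then in each such difference quotient successively replace the remaining $x_j$'s by $y_j$'s, each replacement contributing a second-order term of the required form. The combinatorics of this expansion, and in particular the boundedness of the resulting coefficients, is precisely the content the paper defers to \cref{appen}.

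Granted the expansion, I would bound $R(\bx)$ on $B(\by,r)$ using \eqref{upper0}: every summand satisfies
$$\big|(\bx-\by)^{\beta}\,\bar\Phi_\beta f(\xi_\beta)\big| \;\leq\; r^{2}\cdot \frac{\varepsilon}{r^2}=\varepsilon,$$
so by the ultrametric inequality $|R(\bx)|\leq \varepsilon$ for every $\bx\in B(\by,r)$. Therefore any $\bx\in B(\by,r)$ with $|f(\bx)|<\varepsilon$ satisfies $|L(\bx)|=|f(\bx)-R(\bx)|\leq\max\{|f(\bx)|,|R(\bx)|\}\leq\varepsilon$, yielding the inclusion
$$\{\bx\in B(\by,r):|f(\bx)|<\varepsilon\} \;\subseteq\; \{\bx\in B(\by,r):|L(\bx)|\leq\varepsilon\}.$$
Next I would bound $\|L\|_{B(\by,r)}$ from below. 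By \eqref{lower}, some index $i_0$ satisfies $|\partial_{i_0}f(\by)|\geq c$; choosing $\bx_{0}\in B(\by,r)$ with $|x_{0,i_0}-y_{i_0}|=r$ and $x_{0,j}=y_{j}$ for $j\ne i_0$, we have $L(\bx_0)-L(\by)=\partial_{i_0}f(\by)\,(x_{0,i_0}-y_{i_0})$, and a second application of the ultrametric inequality gives
$$\|L\|_{B(\by,r)} \;\geq\; \max\{|L(\bx_0)|,|L(\by)|\} \;\geq\; |L(\bx_0)-L(\by)| \;=\; r\,|\partial_{i_0}f(\by)| \;\geq\; cr.$$

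The affine map $L$ is, by the absolute $(C,\alpha)$-decaying hypothesis, absolutely $(C,\alpha)$-good on $U\supseteq B(\by,r)$ with respect to $\mu$, and \cref{rem3.1} lets us pass to the non-strict sublevel set. Hence
$$\mu\bigl(\{\bx\in B(\by,r):|f(\bx)|<\varepsilon\}\bigr) \;\leq\; \mu\bigl(\{\bx\in B(\by,r):|L(\bx)|\leq\varepsilon\}\bigr) \;\leq\; C\left(\frac{\varepsilon}{\|L\|_{B(\by,r)}}\right)^{\alpha}\mu(B(\by,r)) \;\leq\; C\left(\frac{\varepsilon}{cr}\right)^{\alpha}\mu(B(\by,r)),$$
which is the desired bound \eqref{local_estimate}. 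The main obstacle in this program is making the Taylor-type identity of the first paragraph precise enough to guarantee $|R|\leq\varepsilon$: while the one-variable expansion is a direct iteration of the definition, tracking how compositions of difference quotients across several coordinates interact -- so that every error term truly appears as $(\bx-\by)^\beta\bar\Phi_\beta f(\xi_\beta)$ with $|\beta|=2$ and uniformly bounded integer coefficients -- requires exactly the combinatorial work the authors carry out in their appendix.
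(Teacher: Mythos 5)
Your argument is correct and follows essentially the same route as the paper: expand $f$ to second order via the ultrametric Taylor formula (the content of the appendix), use \eqref{upper0} and the ultrametric inequality to bound the remainder by $\varepsilon$, reduce the sublevel set of $f$ to a sublevel set of an affine map whose sup-norm on $B(\by,r)$ is at least $cr$, and invoke the absolutely $(C,\alpha)$-decaying hypothesis together with \cref{rem3.1}. The only (harmless) difference is that you work with the affine Taylor polynomial $L$ anchored at $\by$ and bound $\Vert L\Vert_{B(\by,r)}\ge cr$ directly, whereas the paper subtracts the value at a point $\bx_0$ of the sublevel set before applying the decaying property.
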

\begin{proof}
First, observe that for $\bx\in B(\by,r)$, by \cref{TAYLOR} and \eqref{defn:taylor}, we have that
$$f(\bx)=f(\by)+\sum_{i=1}^d(x_i-y_i)\bar\Phi_{e_i}f(\bar\by_{e_i})+\sum_{\substack{{\beta=(i_1,\cdots,i_d)}\\ \vert \beta\vert =2}}\bar\Phi_{\beta}f(\br^{\beta})\prod_{j=1}^d(x_{j}-y_{j})^{i_j} .\\
$$
Here the coordinates of $\br^\beta$ depend on $\bx$ and $\by$. Therefore
$$
\begin{aligned}
 & \vert  f(\bx)-f(\by)-\sum_{i=1}^d(x_i-y_i)\bar\Phi_{e_i}f(\bar\by_{e_i})\vert\\
 & \leq \max_{\vert \beta\vert =2}\vert \bar\Phi_{\beta}f(\br^{\beta})\vert \prod_{j=1}^{d}\vert x_j-y_j\vert^{i_j}\\
 & \stackrel{\eqref{upper0}}{\leq} \frac{\varepsilon}{r^2}r^2=\varepsilon.
\end{aligned}
$$
Therefore, by the ultra-metric property, for any $\bx=(x_1,\dots,x_d)$ and $\bx_0=(x^0_1,\dots,x^0_d)\in B(\by,r)$, we have that
\begin{equation}\label{vb2}
\vert f(\bx)-f(\bx_0)-\sum_{i=1}^d(x_i-x^0_i)\bar\Phi_{e_i}f(\bar\by_{e_i})\vert\le \varepsilon.
\end{equation}
Now take $\bx_0\in B(\by,r)$ for which $\vert f(\bx_0)\vert<\varepsilon$, otherwise there is nothing to prove. Then, by \cref{vb2}, for any $\bx\in B(\by,r)$ with $\vert f(\bx) \vert<\varepsilon$, we have $\left| \sum_{i=1}^d(x_i-x^0_i)\bar\Phi_{e_i}f(\bar\by_{e_i})\right|\le \varepsilon$.
Hence,
\begin{equation}\label{vb3}
\{\bx\in B(\by,r)~\left|~ \vert f(\bx)\vert<\varepsilon\right.\}\subset \left \{ \bx\in B(\by,r)~| ~ \left| \sum_{i=1}^d(x_i-x^0_i)\bar\Phi_{e_i}f(\bar\by_{e_i})\right|\le \varepsilon  \right\}.
\end{equation}
Note that $\bx\to \sum_{i=1}^d(x_i-x^0_i)\bar\Phi_{e_i}f(\bar\by_{e_i}) $ is an affine map. Therefore, since $\mu$ is absolutely $(C,\alpha)$-decaying on $U$, by definition, this affine map is absolutely $(C,\alpha)$-good on $B(\by,r)\subset U$ w.r.t. $\mu$. Further note that,
$$
\max_{\bx\in B(\by,r)}\left\vert\sum_{i=1}^d(x_i-x^0_i)\bar\Phi_{e_i}f(\bar\by_{e_i})\right\vert= r \max_{1\le i\le d}\left\vert \bar\Phi_{e_i}f(\bar\by_{e_i}) \right\vert\stackrel{\eqref{lower}}{\ge}rc.
$$
Hence, by the fact that the above affine map is absolutely $(C,\alpha)$-good on $B(\by,r)$ w.r.t. $\mu$, and in view of \cref{rem3.1}, we get that
 $$
\mu\left\{ \bx\in B(\by,r)~|~ \left| \sum_{i=1}^d(x_i-x^0_i)\bar\Phi_{e_i}f(\bar\by_{e_i})\right|\le \varepsilon \right\}\leq C\left(\frac{\varepsilon}{cr}\right)^\alpha \mu (B(\by,r))
$$
and \cref{vb3} now completes the proof.
\end{proof}

\begin{lemma}\label{lem3}
 Let $f:B^d\subset \Q_\nu^d \to \Q_\nu$ be a $C^{k+1}$ map, where $B$ is a ball  of radius $r>0$. Suppose that there exists $k\in\N$ and a multi-index $\beta=(i_1,\cdots,i_d)$ with $\vert\beta\vert=k\in\N$ such that for any two pairs of multi-indices $(\alpha_1,\alpha_2), (\alpha_1', \alpha_2')$,  with $\alpha_1+\alpha_2=\beta=\alpha_1'+\alpha_2'$ and $\vert \alpha_1\vert >\vert \alpha_1'\vert$, we have that
 \begin{equation}\label{eq0}
    \inf_{B^{\vert \alpha_1\vert+d}} \vert\bar\Phi_{\alpha_1} \partial_{\alpha_2}f \vert \leq C_{\alpha_1,\alpha_1'}\inf_{B^{\vert \alpha_1'\vert+d}} \vert \bar\Phi_{\alpha_1'}\partial_{\alpha_2'}f\vert
\end{equation}
and
\begin{equation}\label{eq1b}
   \inf_{B^{\vert \beta\vert +d}} \vert \bar\Phi_\beta f \vert > \frac{s}{r^k}\Vert f\Vert_{B^d}
\end{equation} for some $ C_{\alpha_1,\alpha_1'}\geq 1, s>0$.
Also, suppose that for some $S>0$, for every multi-index $\alpha$ with $\vert\alpha\vert\leq k+1$, 
\begin{equation}\label{upper}
\sup_{B^{\vert \alpha\vert+d}}\vert \bar\Phi_\alpha f \vert\leq \frac{S}{r^{\vert \alpha\vert}}\Vert f\Vert_{B^d}.
\end{equation} 
Then, if $i_j>0$, the function $\partial_{e_j}f$ also satisfies \cref{eq0}, \cref{eq1b} and \cref{upper} with multi-index $\beta-e_j$ with constants $\frac{s}{C_{\beta,\beta-e_j}S}$, $\frac{ C_{\beta,\beta-e_j}S}{s}$ in place of $s$ and $S$ respectively. Consequently we have that
\begin{equation}\label{e4}
    \frac{s \vert \beta!\vert}{C_{\beta,\beta-e_j}r}\Vert f\Vert_{B^d}< \Vert \partial_{e_j}f \Vert_{B^d}<\frac{S}{r}\Vert f\Vert_{B^d}.
\end{equation}
\end{lemma}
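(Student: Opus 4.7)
The plan is to establish the three properties for $g := \partial_{e_j}f$, which lies in $C^k(B^d)$, and then read off \cref{e4} as a consequence. The central technical ingredient, which I would prove separately, is the identity
\[
\bar\Phi_\alpha\partial_{e_j}f(\bx_\alpha)\ =\ \sum_{l=1}^{i_j+1}\bar\Phi_{\alpha+e_j}f\bigl(\bx_\alpha^{(l)}\bigr)
\qquad\text{for every }\alpha=(i_1,\dots,i_d)\text{ with }|\alpha|\le k,
\]
where $\bx_\alpha^{(l)}$ is obtained from $\bx_\alpha$ by doubling the $l$-th $x_j$-argument. This is the multi-variable version of $\bar\Phi^{k-1}f'(x_1,\dots,x_k)=\sum_{l=1}^{k}\bar\Phi^{k}f(x_1,\dots,x_l,x_l,\dots,x_k)$; I would verify it first for polynomials (where both sides are symmetric polynomials that can be matched via Newton's divided-difference formulas) and then extend by density, using $C^{k+1}$ regularity to pass through the continuous extension of each $\Phi_{\alpha+e_j}f$.

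Granted this identity, \cref{eq0} for $g$ is essentially a relabelling: for $\alpha_1+\alpha_2=\beta-e_j=\alpha_1'+\alpha_2'$ with $|\alpha_1|>|\alpha_1'|$, one has $\bar\Phi_{\alpha_1}\partial_{\alpha_2}g=\bar\Phi_{\alpha_1}\partial_{\alpha_2+e_j}f$ with $\alpha_1+(\alpha_2+e_j)=\beta$, so the inequality follows from \cref{eq0} for $f$ with the same constant $C_{\alpha_1,\alpha_1'}$. Next, specializing \cref{eq0} for $f$ to $(\alpha_1,\alpha_2)=(\beta,0)$ versus $(\alpha_1',\alpha_2')=(\beta-e_j,e_j)$ and inserting \cref{eq1b} for $f$ gives
\[
\inf\bigl|\bar\Phi_{\beta-e_j}g\bigr|\ \ge\ \frac{1}{C_{\beta,\beta-e_j}}\inf\bigl|\bar\Phi_\beta f\bigr|\ >\ \frac{s}{C_{\beta,\beta-e_j}\,r^k}\|f\|_{B^d},
\]
while \cref{lemn} applied to $g$ with multi-index $\beta-e_j$ supplies $\inf|\bar\Phi_{\beta-e_j}g|\le r^{-(k-1)}\|g\|_{B^d}$. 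Chaining these two inequalities yields the lower bound $\|g\|_{B^d}\ge(s/(C_{\beta,\beta-e_j}r))\|f\|_{B^d}$ in \cref{e4}.

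The remaining estimates come from applying the key identity together with the ultrametric inequality: for every $|\alpha|\le k$,
\[
\sup\bigl|\bar\Phi_\alpha g\bigr|\ \le\ \sup\bigl|\bar\Phi_{\alpha+e_j}f\bigr|\ \le\ \frac{S}{r^{|\alpha|+1}}\|f\|_{B^d}
\]
by \cref{upper} for $f$ (valid because $|\alpha+e_j|\le k+1$). Taking $\alpha=0$ gives the upper bound $\|g\|_{B^d}\le(S/r)\|f\|_{B^d}$ in \cref{e4}; inserting the two-sided comparison of $\|f\|_{B^d}$ and $\|g\|_{B^d}$ now provided by \cref{e4} into the displays above yields \cref{upper} for $g$ with $S'=SC_{\beta,\beta-e_j}/s$, and \cref{eq1b} for $g$ with $s'=s/(C_{\beta,\beta-e_j}S)$. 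The main obstacle will be establishing the sum-over-doublings identity in the multi-variable $p$-adic setting: its one-dimensional real analogue (Hermite--Genocchi) does not translate directly, and a careful argument combining the explicit polynomial case with a density/continuity extension is required — precisely the kind of Taylor-type material the authors develop in the appendix.
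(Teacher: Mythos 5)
Your proposal is correct and follows essentially the same route as the paper: the paper's proof rests on exactly your sum-over-doublings identity (stated there as equation \eqref{part} and applied to $\partial_{e_j}f$), then chains Lemma \ref{lemn}, hypothesis \eqref{eq0} specialized to $(\beta,0)$ versus $(\beta-e_j,e_j)$, and \eqref{eq1b} to get \eqref{e4}. If anything you are more careful than the paper, which asserts the identity without proof and does not explicitly verify that $\partial_{e_j}f$ inherits \eqref{eq0}, \eqref{eq1b} and \eqref{upper} with the rescaled constants $s/(C_{\beta,\beta-e_j}S)$ and $C_{\beta,\beta-e_j}S/s$, as you do.
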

\begin{proof}By \cref{verybasic}, \begin{equation}\label{part}
\bar\Phi^i g'(x_1,\cdots, x_{i+1})= \sum_{n=1}^{i+1}\bar\Phi^{i+1} g(x_1,\cdots,x_{n-1},x_{n},x_{n}, x_{n+1},\cdots,x_{i+1}),\end{equation} for a $C^{j+1}$ map $g:\Q_\nu\to\Q_\nu$ and $i\leq j, j>0$.

Using \cref{part}  for $g=\partial_{e_j}f$ we have,
$$\begin{aligned}
& \bar\Phi_{\beta-e_j} \partial_{e_j}f(\bx_1,\cdots, \bx_{j-1},\bx_j,\bx_{j+1},\cdots, \bx_{d})\\
& = \sum_{n=1}^{i_j+1}\bar\Phi_\beta f(\bx_1,\cdots,\bx_{j-1},x_{j,1},\cdots,x_{j,n-1},x_{j,n},x_{j,n}, x_{j,n+1},\cdots,x_{j,i_j+1},\bx_{j+1},\cdots,\bx_d).\end{aligned}$$

Therefore \cref{lemn} gives the following:
\begin{equation}\label{inequalityimp}
    {\vert \beta!\vert^{-1}}\frac{1}{r^{k-1}}\Vert \partial_{e_j}f \Vert_{B^d} \stackrel{\eqref{lemneq}}{\geq} \inf_{B^{(k-1)+d}} \vert \bar\Phi_{\beta-e_j}\partial_{e_j}f\vert \stackrel{\eqref{eq0}}{\geq}\frac{1}{C_{\beta,\beta-e_j}} \inf_{B^{k+d}} \vert\bar\Phi_\beta f\vert\stackrel{\eqref{eq1b}}{\geq}\frac{s}{C_{\beta,\beta-e_j}r^{k}}\Vert f\Vert_{B^d}.
\end{equation}
{Above we have used the fact that $\vert \beta!\vert\leq\vert (\beta-e_j)!\vert$ if $i_j>1$ an integer, which follows from the ultrametric property of the norm on $\Q_\nu$}. By \cref{upper}, $\Vert \partial_{e_j}f\Vert_{B^d}\leq \frac{S}{r}\Vert f\Vert_{B^d}$. Combining this with \cref{inequalityimp} we have \cref{e4}.

\end{proof}

Note that $s\leq S$, that follows by taking $\alpha=\beta$ in \eqref{upper}, and comparing it to \eqref{eq1b}.
Define
$$
\mathcal{A}_{\alpha}:=\{(\alpha_1,\alpha'_1)~|~\exists~ \alpha_2,\alpha_2' \text{ multi-indices  such that }\alpha_1+\alpha_2=\alpha_1'+\alpha_2'=\alpha,\vert \alpha_1\vert >\vert \alpha'_1\vert\}.
$$

\begin{theorem}\label{good}
Suppose that $f:B^d\subset \Q^d_\nu \to \Q_\nu$ is a $C^{k+1}$ map, $B\subset\Q_\nu$ is an open ball of radius $r>0$ and that for some $k\in\N$ and multi-index $\beta$ with $\vert \beta\vert=k$, $f$ satisfies \cref{eq0}, \cref{eq1b} and \cref{upper}. Let $\mu$ be an absolutely $(C,\alpha)$-decaying measure on $B^d$. Then for any $\varepsilon<\min({s,\frac{s}{S}})$ one has
\begin{equation}\label{eqn vital}
    \mu\left(\left\{x\in B^d~|~\vert f(x)\vert <\varepsilon \Vert f \Vert_{B^d}\right \}\right)\leq C {2^k}\left( \prod_{(\alpha_1, \alpha_1')\in\mathcal{A}_{\beta}} C_{\alpha_1,\alpha_1'}\right)^{2^{k}\alpha} {\frac{1}{\vert \beta!\vert^\alpha}}\left(\frac{S}{s}\right)^{2^{k}\alpha} \varepsilon^{\eta_k\alpha} \mu(B^d),
\end{equation}
where $\eta_k=\frac{1}{2^{k+1}-2}$ for $k\geq 1$, and $\eta_0=\infty$ and ${0!=:1}$

\end{theorem}
\begin{proof}
Let us define the following set for $\delta>0$
$$
E(f,\delta,B^d):=\{ \bx\in B^d~|~ \vert f(\bx)\vert <\delta \}.$$
We will proceed by induction on $k$. If $k=0$, then by \eqref{eq1b} and $\varepsilon<s$, the left hand side set of \eqref{eqn vital} becomes empty, so the left hand side is $0$. On the other hand, $\mathcal{A}_{\beta}=\{0\}$ and $C_0\geq 1$, so the right hand side is infinite. Hence the conclusion of the theorem follows trivially in the case $k=0$. Suppose $\beta=(i_1,\cdots,i_d)$ and $i_j>0$. We know that $\partial_{e_j}f$ satisfies the conditions \cref{eq0}, \cref{eq1b} and \cref{upper}, by \cref{lem3}. Choose $$\delta =\varepsilon^{\frac{1}{2(\eta_{k-1}+1)}}.$$ Applying the induction hypothesis, we get \begin{equation}\label{one}
\begin{aligned}
     &\mu\left(E(\partial_{e_j}f,\delta\Vert \partial_{e_j}f\Vert_{B^d}, {B}^d)\right)\leq\\
     &  \hspace{1.5 in}C 2^{k-1}\left( \prod_{(\alpha_1, \alpha_1')\in\mathcal{A}_{\beta-e_j}} C_{\alpha_1,\alpha_1'}\right)^{2^{k-1}\alpha} {\frac{1}{\vert (\beta-e_j)!\vert^\alpha}}{\left(\frac{C_{\beta,\beta-e_j}^2S^2}{s^2}\right)}^{2^{k-1}\alpha}\delta^{\eta_{k-1}\alpha}\mu(B^d).
     \end{aligned}
 \end{equation}

 Let us choose $t=r\sqrt{\frac{\varepsilon}{S}}$. For every $\by\in {B}^d\cap \supp{\mu}\setminus E(\partial_{e_j}f, \delta \Vert \partial_{e_j}f\Vert_{B^d}, {B}^d),$ consider the ball $B(\by,t)$. By the choice of $\varepsilon$, and the fact that $s\leq S$ which we noted earlier, we have that $B(\by,t)\subset B^d$.
 Since $\by\notin E(\partial_{e_j}f, \delta \Vert \partial_{e_j}f\Vert_{B^d}, {B}^d)$,
 $$ \vert \partial_{e_j}f(\by)\vert \geq \delta \Vert \partial_{e_j}f\Vert_{B^d}$$ and, by \cref{upper} for every $\beta_\star$ multi-index, with $\vert \beta_\star\vert=2,$ we have $$
 \Vert \bar\Phi_{\beta_\star} f\Vert_{B(\by,t)^{{\beta_\star}(1)}}\leq \frac{S}{r^2}\Vert f\Vert_{B^d}=\frac{\varepsilon \Vert f\Vert_{B^d}S}{r^2\varepsilon}=\frac{\varepsilon \Vert f\Vert_{B^d}}{t^2}.$$ Now we can use \cref{lem2} and  \cref{e4} and conclude that
 $$\begin{aligned}
 &\mu(B(\by,t)\cap E(f,\varepsilon \Vert f\Vert_{B^d}, B^d))\\
 &\stackrel{\eqref{local_estimate}}{\leq} C\left(\frac{\varepsilon \Vert f\Vert_{B^d}}{\delta t \Vert \partial_{e_j}f\Vert_{B^d}}\right)^\alpha \mu(B(\by,t))\\
 &\stackrel{\eqref{e4}}{\leq} C {\left(\frac{\varepsilon C_{\beta,\beta-e_j} r}{\delta ts {\vert \beta!\vert}}\right)}^\alpha \mu(B(\by,t))\\
 & \leq C C_{\beta,\beta-e_j}^{\alpha} {\frac{1}{\vert \beta!\vert^\alpha}} {\left(\frac{ \sqrt{S}}{ s}\right)}^\alpha \frac{\varepsilon^{\frac{\alpha}{2}}}{\delta^\alpha} \mu(B(\by,t)).
\end{aligned}$$
By the Besicovitch covering property of $\Q_\nu^d$, one can cover ${B}^d\cap \supp{\mu}\setminus E(\partial_{e_j}f, \delta \Vert \partial_{e_j}f\Vert_{B^d}, {B}^d)$ by balls $B(\by,t)$ with multiplicity $1$. This means \eqref{besi_eqn} holds with $N_X=1, X=\Q_\nu^d$; see \S \ref{Besi}.  Hence
\begin{equation}
\label{e5}\begin{aligned}
&\mu\left(\left({B}^d\setminus E(\partial_{e_j}f, \delta \Vert \partial_{e_j}f\Vert_{B^d},{B}^d)\right)\bigcap E(f,\varepsilon \Vert f\Vert_{B^d}, B^d)\right)\\
& \leq  C C_{\beta,\beta-e_j}^\alpha {\frac{1}{\vert \beta!\vert^\alpha}}{\left(\frac{ \sqrt{S}}{ s}\right)}^\alpha \frac{\varepsilon^{\frac{\alpha}{2}}}{\delta^\alpha} \mu(B^d).\end{aligned}
\end{equation}
 Recall that $\delta =\varepsilon^{\frac{1}{2(\eta_{k-1}+1)}}$. Then combining \cref{one} and \cref{e5}, we have that
$$\begin{aligned}
&\mu({B}^d\cap E(f,\varepsilon \Vert f\Vert_{B^d}, B^d))& \\ & \leq
 C {2^{k-1}}\left( \prod_{(\alpha_1, \alpha_1')\in\mathcal{A}_{\beta-e_j}} C_{\alpha_1,\alpha_1'}\right)^{2^{k-1}\alpha} {\frac{1}{\vert (\beta-e_j
 )!\vert^\alpha}}{\left(\frac{C_{\beta,\beta-e_j}^2S^2}{s^2}\right)}^{2^{k-1}\alpha}\delta^{\eta_{k-1}\alpha}\mu(B^d)+ \\& \hspace{3.5 cm} C C_{\beta,\beta-e_j}^\alpha {\frac{1}{\vert \beta!\vert^\alpha}}{\left(\frac{ \sqrt{S}}{ s}\right)}^\alpha \frac{\varepsilon^{\frac{\alpha}{2}}}{\delta^\alpha} \mu(B^d)\\
& \leq C 2^{k-1}\left( \prod_{(\alpha_1, \alpha_1')\in\mathcal{A}_{\beta}} C_{\alpha_1,\alpha_1'}\right)^{2^{k}\alpha} {\frac{1}{\vert \beta!\vert^\alpha}} \left(\frac{S}{s}\right)^{2^{k}\alpha}\left(\delta^{\eta_{k-1}\alpha}+\frac{\varepsilon^{\frac{\alpha}{2}}}{\delta^\alpha}\right)\mu(B^d)\\
& \leq C {2^k}\left( \prod_{(\alpha_1, \alpha_1')\in\mathcal{A}_{\beta}} C_{\alpha_1,\alpha_1'}\right)^{2^{k}\alpha} {\frac{1}{\vert \beta!\vert^\alpha}}\left(\frac{S}{s}\right)^{2^{k}\alpha}\varepsilon^{\frac{\eta_{k-1}\alpha}{2(\eta_{k-1}+1)}}\mu(B^d).
\end{aligned}
$$
Hence, the theorem is proved with $\eta_k=\frac{\eta_{k-1}}{2(\eta_{k-1}+1)}$, which gives $ \eta_k=\frac{1}{2^{k+1}-2}$.

\end{proof}

\section{Polynomials satisfying the hypotheses in \cref{good}}

Let us denote by $\mathcal{P}_{d,l}$ the family of polynomials of degree $l$ in $d$ variables. 
\begin{lemma}\label{poly_maximal}
Suppose $P\in\mathcal{P}_{d,l}$ be a nonzero element and $\eta:\N\cup\{0\}\to\N$ be a function such that $\eta(n+1)\geq \eta(n)+1$. Suppose $k$ is the largest integer such that there exists $\beta$, a multi-index with $\vert \beta\vert =k$, with  $$
\vert \bar \Phi_{\beta} P(\bx_1,\cdots,\bx_{d})\vert >\frac{1}{\nu^{\eta(k)}}
$$ for some $(\bx_1,\cdots,\bx_d)\in  B(\mathbf{0},1)^{\beta(1)}$. Then for every $(\by_1,\cdots,\by_d)\in B(\mathbf{0},1)^{\beta(1)}$, \begin{equation}
    \vert \bar \Phi_{\beta} P(\by_1,\cdots,\by_{d})\vert >\frac{1}{\nu^{\eta(k)+1}}.
\end{equation}
\end{lemma}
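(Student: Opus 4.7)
The plan is to show that $\bar\Phi_\beta P$ has nearly constant $\nu$-adic absolute value on the ball $B(\mathbf{0},1)^{\beta(1)}$, the point being that moving from one argument to another costs only a higher-order difference quotient, which is forced to be small by the maximality of $k$.

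First, from the maximality of $k$ together with the assumption $\eta(n+1)\ge \eta(n)+1$ (which iterates to $\eta(n+m)\ge \eta(n)+m$), I would extract the uniform upper bound $|\bar\Phi_{\gamma}P(\bz)|\le \nu^{-\eta(k)-1}$ for every multi-index $\gamma$ with $|\gamma|\ge k+1$ and every $\bz\in B(\mathbf{0},1)^{\gamma(1)}$.

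Second, I would invoke the basic one-variable difference-quotient identity
\begin{equation*}
\bar\Phi^{i_j}_j g(a,w_2,\dots,w_{i_j+1})-\bar\Phi^{i_j}_j g(b,w_2,\dots,w_{i_j+1}) \;=\; (a-b)\,\bar\Phi^{i_j+1}_j g(a,b,w_2,\dots,w_{i_j+1}),
\end{equation*}
applied to $P$ viewed as a polynomial in $x_j$ with the remaining variables held as parameters. Combined with the symmetry of $\bar\Phi^{i_j}_j$ in its arguments, this allows me to replace any one of the $\sum_j(i_j+1)=k+d$ scalar coordinates of the argument of $\bar\Phi_\beta P$ at the cost of picking up a single term of the form $(a-b)\,\bar\Phi_{\beta+e_j}P(\cdots)$, evaluated at a point that still lies in $B(\mathbf{0},1)^{(\beta+e_j)(1)}$.

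Third, I would swap coordinates one at a time to interpolate from $(\bx_1,\dots,\bx_d)$ to $(\by_1,\dots,\by_d)$. Each swap contributes a difference of $\nu$-adic absolute value at most $|a-b|_\nu\cdot \nu^{-\eta(k)-1}\le \nu^{-\eta(k)-1}$, since $a,b\in B(\mathbf{0},1)$ forces $|a-b|_\nu\le 1$ and the first-step bound controls $|\bar\Phi_{\beta+e_j}P|$. Summing these $k+d$ contributions through the ultrametric inequality yields
\begin{equation*}
\bigl|\bar\Phi_\beta P(\bx_1,\dots,\bx_d)-\bar\Phi_\beta P(\by_1,\dots,\by_d)\bigr|_\nu\;\le\;\nu^{-\eta(k)-1}.
\end{equation*}
Since $|\bar\Phi_\beta P(\bx_1,\dots,\bx_d)|_\nu>\nu^{-\eta(k)}$ strictly exceeds $\nu^{-\eta(k)-1}$, the ultrametric inequality forces $|\bar\Phi_\beta P(\by_1,\dots,\by_d)|_\nu = |\bar\Phi_\beta P(\bx_1,\dots,\bx_d)|_\nu > \nu^{-\eta(k)}$, which is already stronger than the claimed $>\nu^{-\eta(k)-1}$.

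The main obstacle is purely bookkeeping: one must apply the single-variable recursion inside the iterated composition defining $\bar\Phi_\beta$, keep track of which coordinate is being swapped, and use symmetry of $\bar\Phi^{i_j}_j$ under permutation of its arguments so that the swap can be performed at any chosen position. Once this bookkeeping is in place, the proof collapses to the ultrametric estimate above.
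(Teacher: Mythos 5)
Your argument is correct and follows essentially the same route as the paper: telescope $\bar\Phi_\beta P(\bx)-\bar\Phi_\beta P(\by)$ coordinate by coordinate, recognize each increment as (a unit-ball multiple of) an order-$(k+1)$ difference quotient, and bound it by $\nu^{-\eta(k+1)}\le\nu^{-\eta(k)-1}$ using the maximality of $k$. The only differences are cosmetic: the paper concludes by contradiction from $|u|-|v|\le|u-v|$, whereas you use the ultrametric isosceles property directly and thereby obtain the slightly stronger conclusion $|\bar\Phi_\beta P(\by)|=|\bar\Phi_\beta P(\bx)|$.
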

\begin{proof}
Note 
$$\begin{aligned}
 &\bar\Phi_\beta P(\bx_1,\cdots,\bx_{d}) -\bar\Phi_{\beta} P(\by_1,\cdots,\by_d)\\
 & = \sum_{j=0}^{d-1}\bar\Phi_\beta P(\by_1,\cdots,\by_j,\bx_{j+1},\bx_{j+2},\cdots,\bx_{d}) -\bar\Phi_{\beta} P(\by_1,\cdots,\by_j,\by_{j+1},\bx_{j+2},\cdots,\bx_d)\\
 &=\sum_{j=0}^{d-1}\sum_{q=0}^{i_{j+1}} \bigg(\bar\Phi_{\beta} P(\by_1,\cdots,\by_j,x_{j+1,1},\cdots,x_{j+1,i_{j+1}+1-q},y_{j+1,1},\cdots, y_{j+1,q},\bx_{j+2},\cdots,\bx_d) \\
 & \hspace{2.5 cm}-\bar\Phi_{\beta} P(\by_1, \cdots,\by_j, x_{j+1,1},\cdots,x_{j+1,i_{j+1}-q},y_{j+1,1},\cdots,y_{j+1,q+1},\bx_{j+2},\cdots,\bx_d)\bigg)\\
 & =\sum_{j=0}^{d-1}\sum_{q=0}^{i_{j+1}} \bar\Phi_{\beta+e_{j}} P(\by_1,\cdots,\by_j,x_{j+1,1},\cdots,x_{j+1,i_{j+1}+1-q},y_{j+1,1},\cdots, y_{j+1,q+1},\bx_{j+2},\cdots,\bx_d) \delta_{j,q}
 \end{aligned}  
 $$ where {$\delta_{j,q}=(x_{j+1,i_{j+1}+1-q}-y_{j+1,q+1}).$}
 Therefore, we have $$\begin{aligned} & \vert \bar\Phi_{\beta} P(\bx_1,\cdots,\bx_{d})\vert -\vert\bar\Phi_{\beta} P(\by_1,\cdots,\bx_d)\vert \\
&\leq \max_{j=0}^{d-1}\max_{q=0}^{i_{j+1}}\left\{ \vert \bar\Phi_{\beta+e_j} P(\by_1,\cdots,\by_j,x_{j+1,1},\cdots,x_{j+1,i_{j+1}+1-q},y_{j+1,1},\cdots,y_{j+1,q+1},\bx_{j+2},\cdots,\bx_d)\vert\right\}.\end{aligned}$$
We proceed by contradiction. Suppose there exists a $(\by_1,\cdots,\by_d)\in B(\mathbf{0},1)^{\beta(1)}$ such that
 \begin{equation}
 \vert \bar\Phi_\beta P(\by_1,\cdots,\by_d)\vert\leq \frac{1}{\nu^{\eta(k)+1}}.
 \end{equation}
 Then we have $$\frac{1}{\nu^{\eta(k+1)}}<\frac{1}{\nu^{\eta(k)}}-\frac{1}{\nu^{\eta(k)+1}}<\vert \bar\Phi_{\beta} P(\bx_1,\cdots,\bx_{d})\vert -\vert\bar\Phi_{\beta} P(\by_1,\cdots,\bx_d)\vert.$$ The above will imply that there exists $0\leq j\leq d-1$, and $0\leq q\leq i_{j+1}$ such that $$\vert \bar\Phi_{\beta+e_{j}} P(\by_1,\cdots,\by_j,x_{j+1,1},\cdots,x_{j+1,i_{j+1}+1-q},y_{j+1,1},\cdots,y_{j+1,q+1},\bx_{j+2},\cdots,\bx_d)\vert >\frac{1}{\nu^{\eta(k+1)}}.$$ Since $\vert \beta+e_j\vert=k+1$, the last inequality contradicts the maximality of $k$.
\end{proof}

In the following theorem $\alpha, \alpha_1,\alpha'_1, \alpha_2,\alpha'_2$ and $\beta$ are all muti-indices.

\begin{theorem}\label{poly}
Fix $d$ and $l$. There exist $s,S>0$ and a collection $\bigcup_{\tilde\eta, \vert \tilde\eta\vert\leq l}\{C_{\alpha_1,\alpha'_1}: (\alpha_1,\alpha_1')\in \mathcal{A}_{\tilde{\eta}}\}$ of constants greater than $1$ such that for any nonzero $P\in \mathcal{P}_{d,l}$ the following are satisfied.
\begin{itemize}
\item There exists $k\leq l$ and a multi-index $\beta=(i_1,\cdots,i_d)$ with $\vert\beta\vert=k\in\N$ such that, for every  $\alpha_1+\alpha_2=\beta=\alpha_1'+\alpha_2'$,  $\vert \alpha_1\vert >\vert \alpha_1'\vert$,
 \begin{equation}\label{poly0}
    \inf_{B(0,1)^{\vert \alpha_1\vert+d}} \vert\bar\Phi_{\alpha_1} \partial_{\alpha_2}P\vert \leq C_{\alpha_1,\alpha_1'}\inf_{B(0,1)^{\vert \alpha_1'\vert+d}} \vert \bar\Phi_{\alpha_1'}\partial_{\alpha_2'}P\vert.
\end{equation}
and  \begin{equation}\label{poly1}
\inf_{B(0,1)^{\vert \beta\vert+d}}\vert \bar \Phi_{\beta} P(\bx_1,\cdots,\bx_{d})\vert > s \Vert P\Vert_{B(0,1)^d}.\end{equation}
\item
\begin{equation}\label{poly2} \sup _{B(0,1)^{\vert\alpha\vert+d}}\vert \bar \Phi_{\alpha} P(\bx_1,\cdots,\bx_{d})\vert<S \Vert P\Vert_{B(0,1)^d} ~ ~\forall~~ \vert \alpha\vert \geq 0.
\end{equation}
\end{itemize}
\end{theorem}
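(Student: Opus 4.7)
The plan is to choose the multi-index $\beta$ via Lemma~\ref{poly_maximal} applied with a suitable auxiliary function $\eta$, then verify all three conditions by combining this maximal choice with an iterated form of the one-dimensional ``part'' identity used in the proof of Lemma~\ref{lem3}. By the homogeneity of \eqref{poly0}--\eqref{poly2} under $P\mapsto\lambda P$, I may assume $\|P\|_{B(0,1)^d}=1$. Since $\mathcal{P}_{d,l}$ is a finite-dimensional $\Q_\nu$-vector space, the max-coefficient norm is equivalent to the sup-norm, so the coefficients of $P$ are bounded in $|\cdot|_\nu$ by some constant $K=K(d,l,\nu)$. Writing $N:=\max\{v_\nu(\gamma!):|\gamma|\le l\}$, where $v_\nu$ is the $\nu$-adic valuation, set $\eta(n):=(N+2)n+1$, so that $\eta(n+1)\ge\eta(n)+2$ as required by Lemma~\ref{poly_maximal}.

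For the existence of $k,\beta$: at $k=0$, $\bar\Phi_0 P=P$ attains its sup-norm $1>\nu^{-\eta(0)}$ on the compact set $B(0,1)^d$, while $\bar\Phi_\gamma P=0$ for $|\gamma|>l$ since $\deg P\le l$. The set of admissible $k$ in Lemma~\ref{poly_maximal} is thus a nonempty finite subset of $\{0,\dots,l\}$; take its maximum $k$ and a corresponding multi-index $\beta$ with $|\beta|=k$. Lemma~\ref{poly_maximal} then gives $|\bar\Phi_\beta P(\by)|>\nu^{-\eta(k)-1}\ge\nu^{-\eta(l)-1}=:s$ for every $\by\in B(0,1)^{\beta(1)}$, proving \eqref{poly1}. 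Inequality \eqref{poly2} follows at once: expanding $\bar\Phi_\alpha P$ coordinatewise via $\bar\Phi_j^{i}(x_j^n)=h_{n-i}(x_{j,1},\dots,x_{j,i+1})$ (the complete homogeneous symmetric polynomial) shows that $\bar\Phi_\alpha P$ is a $\Q_\nu$-polynomial whose coefficients are $\Z$-multiples of those of $P$, and the ultrametric bound $|h_m|\le 1$ on $B(0,1)^{m+1}$ then yields $\sup|\bar\Phi_\alpha P|\le K=:S$.

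For \eqref{poly0} I will in fact prove the stronger, uniform lower bound $\inf|\bar\Phi_{\alpha_1}(\partial_{\alpha_2}P)|\ge s_*>0$ for \emph{every} decomposition $\alpha_1+\alpha_2=\beta$; together with the upper bound $\sup\le S$ this gives \eqref{poly0} with $C_{\alpha_1,\alpha_1'}:=S/s_*\ge 1$. The key combinatorial step is a multi-variable extension of the identity $\bar\Phi^i g'(x_1,\dots,x_{i+1})=\sum_{n=1}^{i+1}\bar\Phi^{i+1}g(x_1,\dots,x_n,x_n,\dots,x_{i+1})$ used in the proof of Lemma~\ref{lem3}: iterating it one coordinate at a time presents $\bar\Phi_\gamma(\partial_\delta P)(\cdot)$ as a combinatorial sum of values of $\bar\Phi_{\gamma+\delta}P$ at points of $B(0,1)^{|\gamma+\delta|+d}$ with repeated entries, so by the ultrametric inequality $|\bar\Phi_\gamma(\partial_\delta P)|\le\sup|\bar\Phi_{\gamma+\delta}P|$. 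Applying this with $\gamma=\alpha_1+e_m$, $\delta=\alpha_2$ and using the maximality of $k$ (which forces $\sup|\bar\Phi_{\beta+e_m}P|\le\nu^{-\eta(k+1)}$) gives $\sup|\bar\Phi_{\alpha_1+e_m}(\partial_{\alpha_2}P)|\le\nu^{-\eta(k+1)}$; the telescoping argument from the proof of Lemma~\ref{poly_maximal}, now applied to the function $\bar\Phi_{\alpha_1}(\partial_{\alpha_2}P)$, then bounds its variation on $B(0,1)^{|\alpha_1|+d}$ by $\nu^{-\eta(k+1)}$. On the other hand, applying \eqref{beta} twice produces the explicit diagonal value $\bar\Phi_{\alpha_1}(\partial_{\alpha_2}P)(\bar\by_{\alpha_1})=\tfrac{\beta!}{\alpha_1!}\bar\Phi_\beta P(\bar\by_\beta)$, whose $|\cdot|_\nu$ is at least $\nu^{-N}\cdot\nu^{-\eta(k)-1}=\nu^{-N-\eta(k)-1}$. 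Since $\eta(k+1)-\eta(k)=N+2$, this diagonal value strictly dominates the variation, so the ultrametric inequality forces $|\bar\Phi_{\alpha_1}(\partial_{\alpha_2}P)(\bx)|$ to equal the diagonal absolute value for every $\bx$, yielding $\inf\ge\nu^{-N-\eta(l)-1}=:s_*$.

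The principal difficulty is the multi-variable identity expressing $\bar\Phi_\gamma(\partial_\delta P)$ as a combinatorial sum of values of $\bar\Phi_{\gamma+\delta}P$, which has to be established by induction on $|\delta|$ using the one-dimensional identity coordinate by coordinate, with care to verify that all evaluation points land in $B(0,1)^{|\gamma+\delta|+d}$ (automatic here since $B(0,1)$ is stable under repetition of its elements). Once this identity is in place, the rest of the argument reduces to the ultrametric estimates above, with the gap $\eta(n+1)-\eta(n)=N+2$ deliberately chosen large enough to absorb the factorial denominators $\beta!/\alpha_1!$.
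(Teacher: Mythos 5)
Your proposal is correct, and for the first bullet point \eqref{poly0} it takes a genuinely different route from the paper. Both arguments share the same skeleton: normalize $\Vert P\Vert_{B(0,1)^d}=1$, pick $k$ and $\beta$ maximal via \cref{poly_maximal} with an auxiliary gap function $\eta$, get \eqref{poly1} from that lemma, and get \eqref{poly2} from equivalence of norms on the finite-dimensional space $\mathcal{P}_{d,l}$ (your explicit complete-homogeneous-symmetric-polynomial computation is just a concrete instance of the paper's norm-equivalence remark). The divergence is in \eqref{poly0}: the paper proceeds through a chain of pairwise claims (Claim 1, Claim 2, \dots, Claim $c$, Claim $(\beta-e_j,\beta-e_j-e_m)$), each comparing $\inf|\bar\Phi_{\beta-\gamma}\partial_{\gamma}P|$ to the next decomposition via an algebraic identity whose error terms involve $\bar\Phi_{\beta+e_m}P$ and are killed by the maximality of $k$; the constants $|i_j|^{-1},|i_j-1|^{-1},\dots$ accumulate step by step, which is why the paper needs the quadratically growing $\eta(n)=n^2a+na+1$, and the final iteration is left informal (``one can continue with the same arguments''). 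You instead prove the stronger statement that $|\bar\Phi_{\alpha_1}\partial_{\alpha_2}P|$ is \emph{constant} on the unit polydisc for every decomposition $\alpha_1+\alpha_2=\beta$: the diagonal value equals $|\beta!/\alpha_1!|\,|\bar\Phi_\beta P(\bar\by_\beta)|\ge\nu^{-N}\nu^{-\eta(k)-1}$ by \eqref{beta}, while the oscillation is controlled by $\sup|\bar\Phi_{\beta+e_m}P|\le\nu^{-\eta(k+1)}$ through the sum identity $\bar\Phi_{\alpha_1}\partial_{\alpha_2}P=\sum\bar\Phi_{\beta}P(\bt_\beta)$ (which the paper itself invokes later) together with the telescoping from \cref{poly_maximal}; choosing $\eta(n+1)-\eta(n)=N+2$ makes the diagonal value dominate, and the ultrametric isoceles property finishes it. This buys you a uniform lower bound $s_*$ for all decompositions at once, hence a single clean constant $C_{\alpha_1,\alpha_1'}=S/s_*$, a linear rather than quadratic $\eta$, and it replaces the paper's loosest step with a closed induction-free argument; your choice of gap $N+2\ge 2$ also quietly repairs the $\nu=2$ edge case in the hypothesis of \cref{poly_maximal}. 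The only points to make explicit in a final write-up are the multi-variable sum identity itself (by induction on $|\alpha_2|$ from \eqref{part}, coordinate by coordinate, as you indicate) and the trivial case $k=0$, where \eqref{poly0} is vacuous.
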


\begin{proof}
Without loss of generality we assume that $\Vert P \Vert_{B(0,1)^d}=1$. Let $a> 0$ be an integer such that $
\frac{1}{\nu^a}< \vert i\vert =\vert i\vert_\nu$ for $i=1,\cdots,l$. Consider the function $\eta:\N\cup\{0\}\to\N$, $\eta(n):=n^2a+na+1$. Let $k$ be the largest integer such that there exists a multi-index $\beta=(i_1,\cdots,i_d)$ with $\vert \beta\vert =k$ and  $(\bx_1,\cdots,\bx_{d})\in B(\mathbf{0},1)^{\beta(1)}$ such that
$$
\vert \bar \Phi_{\beta} P(\bx_1,\cdots,\bx_{d})\vert >\frac{1}{\nu^{\eta(k)}}. $$ { Such a $k$ exists because for $k=0$ the above inequality holds by the fact that $\Vert P\Vert_{B(0,1)^d}=1$ and $\eta(0)=1.$} { Also note that $k\leq l$}. By \cref{poly_maximal}, we have that \begin{equation}\label{lowerbound}\inf_{B(0,1)^{\vert \beta\vert+d}} \vert \bar \Phi_{\beta}P\vert>\frac{1}{\nu^{\eta(k)+1}}.\end{equation} If $k=0$ then \cref{poly0} will be true trivially. So, let us assume $k\neq 0$.\\

\textbf{Claim 1:} We claim that for any $j=1,\cdots,d$ if $i_j>0$ then \begin{equation}\label{claim}
\inf_{B(0,1)^{\vert \beta\vert+d}}\vert \bar\Phi_{\beta}P\vert\leq \vert i_j\vert^{-1}\inf_{B(0,1)^{\vert\beta\vert-1+d}}\vert \bar\Phi_{\beta-e_j}\partial_{e_j}P\vert .\end{equation}
Let $(\bx_1^0,\cdots,\bx_d^0)\in B(\mathbf{0},1)^{(\beta-e_j)(1)}$ be such that  $$ \inf_{B(0,1)^{\vert\beta\vert-1+d}}\vert \bar\Phi_{\beta-e_j}\partial_{e_j}P\vert= \vert \bar\Phi_{\beta-e_j}\partial_{e_j}P(\bx_1^0,\cdots,\bx_d^0)\vert.$$
Now by \cref{apn: lemma 1}, for any $(\bx_1,\cdots,\bx_d)\in B(\mathbf{0},1)^{\beta(1)}$,
\begin{equation}\label{claim0}\begin{aligned}
& i_j\bar \Phi_\beta P(\bx_1,\cdots,\bx_{d})\\
& = \bar\Phi_{\beta-e_j}\partial_{e_j}P(\bx_1,\cdots,\bx_{j-1},x_{j,1},\cdots,x_{j,i_j},\bx_{j+1},\cdots,\bx_d)\\
& ~~~~~ +\sum_{i=1}^{i_j} \bar\Phi_{\beta+e_j}P(\bx_1,\cdots,\bx_{j-1},x_{j,1},\cdots,x_{j,i-1},x_{j,i},x_{j,i},x_{j,i+1},\cdots,x_{j,i_j+1},\bx_{j+1},\cdots,\bx_d)(x_{j,i_j+1}-x_{j,i}).
\end{aligned}
\end{equation}
If for some $x_{j,i_j+1}\in B(0,1)$, $$\begin{aligned}
&\max_{i=1}^{i_j} \vert \bar\Phi_{\beta+e_j}P(\bx^0_1,\cdots,\bx^0_{j-1},x^0_{j,1},\cdots,x^0_{j,i-1},x^0_{j,i},x^0_{j,i},x^0_{j,i+1},\cdots,x^0_{j,i_j},x_{j,i_j+1},\bx^0_{j+1},\cdots,\bx^0_d)\vert\\
& \leq \vert \bar\Phi_{\beta-e_j}\partial_{e_j}P(\bx^0_1,\cdots,\bx^0_{j-1},x^0_{j,1},\cdots,x^0_{j,i_j},\bx^0_{j+1},\cdots,\bx^0_d)\vert\end{aligned},$$ then \cref{claim} holds using \cref{claim0}. If not, then for every $x_{j,i_j+1}\in B(0,1)$ there exists some $1\leq i\leq i_j$ such that $$\begin{aligned}
&\vert \bar\Phi_{\beta+e_j}P(\bx^0_1,\cdots,\bx^0_{j-1},x^0_{j,1},\cdots,x^0_{j,i-1},x^0_{j,i},x^0_{j,i},x^0_{j,i+1},\cdots,x^0_{j,i_j},x_{j,i_j+1},\bx^0_{j+1},\cdots,\bx^0_d)\vert\\
& > \vert \bar\Phi_{\beta-e_j}\partial_{e_j}P(\bx^0_1,\cdots,\bx^0_{j-1},x^0_{j,1},\cdots,x^0_{j,i_j},\bx^0_{j+1},\cdots,\bx^0_d)\vert.\end{aligned}$$ Since  $\vert i_j\vert>\frac{1}{\nu^a}$, we have that
$$\begin{aligned}
& \frac{1}{\nu^{\eta(k)+1+a}}<\frac{\vert i_j\vert }{\nu^{\eta(k)+1}} <\vert i_j\vert \vert \bar\Phi_\beta P(\bx_1,\cdots,\bx_d)\vert\\
& \ \ \ \ \ \ \ <\vert \bar\Phi_{\beta+e_j}P(\bx^0_1,\cdots,\bx^0_{j-1},x^0_{j,1},\cdots,x^0_{j,i-1},x^0_{j,i},x^0_{j,i},x^0_{j,i+1},\cdots,x^0_{j,i_j},x_{j,i_j+1},\bx^0_{j+1},\cdots,\bx^0_d)\vert.
\end{aligned}$$
Now note that $$\frac{1}{\nu^{(\eta(k)+1+a)}}> \frac{1}{\nu^{((k+1)^2a+(k+1)a+1)}}=\frac{1}{\nu^{\eta(k+1)}},$$ which will give a contradiction to the maximality of $k$. This yields Claim 1.\\

\textbf{ Claim 2:}
If $i_j>1$ then we claim that
    \begin{equation}\label{claim2}
\inf_{B(0,1)^{\vert \beta\vert-1+d}}\vert \bar\Phi_{\beta-e_j}\partial_{e_j}P\vert\leq \vert i_j-1\vert^{-1}\inf_{B(0,1)^{\vert\beta\vert-2+d}}\vert \bar\Phi_{\beta-2e_j}\partial_{2e_j}P\vert .
\end{equation}
Note that Claim 1 and \cref{lowerbound} imply that  \begin{equation}\label{lower1}\inf_{B(0,1)^{\vert \beta\vert-1+d}}\vert \bar\Phi_{\beta-e_j}\partial_{e_j}P\vert>\frac{1}{\nu^{\eta(k)+1+a}}.\end{equation}
Now observe that by \cref{apn: lemma 1}, for any $(\bx_1,\cdots,\bx_d)\in B(\mathbf{0},1)^{(\beta-e_j)(1)},$
$$\begin{aligned}
 &(i_j-1)\bar\Phi_{\beta-e_j}\partial_{e_j}P(\bx_1,\cdots,\bx_d)\\
 & = \bar\Phi_{\beta-2e_j}\partial_{2e_j}P(\bx_1,\cdots,\bx_{j-1},x_{j,1},\cdots,x_{j,i_j-1},\bx_{j+1},\cdots,\bx_{d})\\
 & +\sum_{i=1}^{i_j-1} \bar\Phi_{\beta}\partial_{e_j}P(\bx_1,\cdots,\bx_{j-1},x_{j,1},\cdots,x_{j,i-1},x_{j,i},x_{j,i},x_{j,i+1},\cdots,x_{j,i_j},\bx_{j+1},\cdots,\bx_d)(x_{j,i_j}-x_{j,i})\\
 & \stackrel{\cref{verybasic}}{=} \bar\Phi_{\beta-2e_j}\partial_{2e_j}P(\bx_1,\cdots,\bx_{j-1},x_{j,1},\cdots,x_{j,i_j-1},\bx_{j+1},\cdots,\bx_{d})\\
&\ \ \ \ +\sum_{i=1}^{i_j-1} \left (\sum_{b=1}^{i_j}  \bar\Phi_{\beta+e_j}P(\bt_{b,i})\right)(x_{j,i_j}-x_{j,i}),
\end{aligned}
$$ where $\bt_{b,i}=(\bx_1,\cdots,\bx_{j-1}, x_{j,i}, x_{j,1},\cdots, x_{j,b},x_{j,b}, x_{j,b+1}, \cdots, x_{j,i_j}, \bx_{j+1},\cdots, \bx_{d})$ has coordinates in terms of $\bx$. {In the last line of the above equality, in particular, we use \cref{verybasic} to get
$$\bar\Phi_{\beta}\partial_{e_j}P(\bx_1,\cdots,\bx_{j-1},x_{j,1},\cdots,x_{j,i-1},x_{j,i},x_{j,i},x_{j,i+1},\cdots,x_{j,i_j},\bx_{j+1},\cdots,\bx_d)= \left (\sum_{b=1}^{i_j}  \bar\Phi_{\beta+e_j}P(\bt_{b,i})\right).$$}

Note that $\frac{1}{\nu^{\eta(k)+1+2a}}>\frac{1}{\nu^{\eta(k+1)}}$. Hence the
same argument as in Claim 1 using \cref{lower1} yields Claim 2.\\

\textbf{ Claim c :}
Continuing this iterative method we get that for any $j=1,\cdots,d$, if $i_j>c+1$ then, \begin{equation}\label{claimm}
\inf_{B(0,1)^{\vert \beta\vert-c+d}}\vert \bar\Phi_{\beta-ce_j}\partial_{ce_j}P\vert\leq \vert i_j-c\vert^{-1}\inf_{B(0,1)^{\vert\beta\vert-c-1+d}}\vert \bar\Phi_{\beta-(c+1)e_j}\partial_{(c+1)e_j}P\vert .\end{equation}

\textbf{ Claim $(\beta-e_j,\beta-e_j-e_m)$:}
We claim that \begin{equation}\label{claimejem} \inf_{B(0,1)^{\vert \beta\vert-1+d}}\vert \bar\Phi_{\beta-e_j}\partial_{e_j}P\vert \leq \vert i_m\vert^{-1} \inf_{B(0,1)^{\vert \beta\vert-2+d}}\vert \bar\Phi_{\beta-e_j-e_m}\partial_{e_j+e_m}P\vert. \end{equation}
If $i_m>0 $ and $i_j>0$ then by \cref{apn: lemma 1} and \cref{verybasic} for any $(\bx_1,\cdots,\bx_d)\in B(\mathbf{0},1)^{(\beta-e_j)(1)},$ one can write the following:
$$\begin{aligned}
 & i_m\bar\Phi_{\beta-e_j}\partial_{e_j}P(\bx_1,\cdots,\bx_d)\\
 & =\bar\Phi_{\beta-e_j-e_m}\partial_{e_j+e_m}P(\bx_1,\cdots,\bx_{m-1},x_{m,1},\cdots,x_{m,i_m},\bx_{m+1},\cdots,\bx_{j-1},x_{j,1},\cdots,x_{j,i_j},\bx_{j+1},\cdots,\bx_{d})\\
\ \ \ \ & +\sum_{i=1}^{i_m} \left (\sum_{b=1}^{i_j}  \bar\Phi_{\beta+e_m}P(\bt'_{b,i})\right)(x_{m,i_m+1}-x_{m,i}),
\end{aligned}
$$ where $$\bt'_{b,i}=(\bx_1,\cdots,\bx_{m-1}, \delta_{m,i},\bx_{m+1},\cdots,\bx_{j-1},\mathbb{\kappa}_{j,b},\bx_{j+1},\cdots,\bx_d), \text{ and }$$
$$\delta_{m,i}=(x_{m,1}, \cdots, x_{m,i},x_{m,i},x_{m,i+1},\cdots, x_{m,i_{m}+1}) \text{ and } \kappa_{j,b}= (x_{j,1}, \cdots, x_{j,b},x_{j,b},x_{j,b+1},\cdots,x_{j,i_j})$$ have coordinates in terms of $\bx$. The deduction of the above from \cref{apn: lemma 1} and \cref{verybasic} follows the same way as in (the first unnumbered equations) Claim 2.  Now the
same argument as in Claim 1 using \cref{lower1} yields Claim $(\beta-e_j,\beta-e_j-e_m)$.
One can continue with the same arguments and get \cref{poly0}.\\
\indent The upper bound, \cref{poly2} follows from the observation that $P\to \Vert P\Vert_{B(0,1)^d}$ and $P\to \max_\alpha\Vert \bar \Phi_\alpha P\Vert_{B(0,1)^{\vert \alpha\vert+d}}$ are two norms on $\mathcal{P}_{d,l}$, a finite dimensional vector space over $\Q_\nu$. Hence they must be equivalent.

{ It is clear from \eqref{lowerbound}, that we may take $s=\frac{1}{\nu^{\eta(l)+1}}$. Also note that $C_{\alpha_1,\alpha_1'},$ we find in the proof  depends on the indices of $\alpha_1,\alpha_1',\beta$, and since $\vert \beta\vert \leq  l $, these constants can be made only dependent on $l$ and $d$; see \eqref{claim}, \eqref{claim2}, \eqref{claimm}, \eqref{claimejem}.}
\end{proof}

\section{Approximation by polynomials}
Let us denote $y^k:=(y,\cdots,y)\in \Q_\nu^k$ for $y\in\Q_\nu$, and $k\in \N$. We denote $\bar{\by}_\beta:=(y_1^{i_1+1},\cdots,y_d^{i_d+1})$, where $\beta=(i_1,\cdots,i_d)$. Suppose that $f:B^d\subset B(0,1)^d\to \Q_\nu$ is a $C^{l+1}$ map for some $l\in \N$. 
For a multi-index $\beta:=(i_1,\cdots,i_d)$ we set $L_{\beta,\by}(\bx):=\prod_{j=1}^d(x_j-y_j)^{i_j}$, where $\by\in\Q_\nu^d$. When $\beta=0$, we define $L_{\beta,\by}=1$. Note that 
$$
\bar\Phi_{\beta'} L_{\beta,\by}= 1, \text{ when } \beta=\beta', 
$$ and 
$$
\bar\Phi_{\beta'} L_{\beta,\by}=0, \text{ when } \beta\neq \beta', \vert \beta\vert=\vert\beta'\vert.
$$

We denote the $l$-th Taylor polynomial of $f$ at $\by\in B^d$ as  \begin{equation}\label{defn:taylor}\begin{aligned}
& P_{f,\by,l}(\bx):=f(\by)+ \sum_{k=1}^l\sum_{\vert\beta\vert=k}\bar\Phi_{\beta}f(\bar\by_\beta)L_{\beta,\by}(\bx).
\end{aligned}\end{equation}

Note that it follows from the definition that,
\begin{equation}\label{C1}
\bar\Phi_{\beta+e_1} f(z,\bar\by_{\beta})=\bar\Phi_{\beta+e_1} f(\bar\by_{\beta+e_1})+(z-y_1)\bar\Phi_{\beta+2e_1}f(z,\bar\by_{\beta+e_1}).
\end{equation}

Let us denote $f_a^i(\bx):=\bar\Phi_{e_i}f(a,\bx^{(i)})$, where $(a,\bx^{(i)}):=(x_1,\cdots,x_{i-1},a,x_i,x_{i+1},\cdots,x_d)$ for $\bx\in B^d$, $a\in B$, and $i=1,\cdots,d$. Also, for $\bx_{\beta}=(\bx_1,\cdots,\bx_d)\in B^{\beta}$, let us denote $(a,\bx_{\beta}^{(i)}):= (\bx_1,\cdots,\bx_{i-1}, a,\bx_i,\bx_{i+1},\cdots,\bx_d)$, and $(a,b,\bx_{\beta}^{(i)}):= (\bx_1,\cdots,\bx_{i-1}, a,b,\bx_i,\bx_{i+1},\cdots,\bx_d).$

Therefore note by definition, 
$$
\bar\Phi_{\alpha+e_i}f(y_i, \bx_{\alpha(1)}^{(i)})=\bar\Phi_\alpha f_{y_i}^i(\bx_{\alpha(1)}).
$$

 \begin{lemma}\label{chain rule}Suppose that $z\in B,\by\in B^d$. Let $f: B^d\subset B(0,1)^d\to \Q_{\nu}$ a $C^{l+1}$ map.  For any multi-index $\alpha$ with $\vert \alpha\vert \leq l-1$ and for any $\bx_{\alpha(1)}\in B^{\alpha(1)}$ and $i=1,\dots,d$ we have,
 \begin{equation}\label{chain}
\begin{aligned}
  &\ \ \ \bar\Phi_{\alpha+e_i} P_{f,\by,l}(z,\bx_{\alpha(1)}^{(i)})\\&
  = (z-y_i)\bar\Phi_{\alpha+e_i} P_{f^i_{y_i},\by,l-1} (z,\bx_{\alpha(1)}^{(i)})+ \bar\Phi_{\alpha}P_{f^i_{y_i},\by,l-1} (\bx_{\alpha(1)}).\end{aligned}
  \end{equation}
 \end{lemma}
 \begin{proof}

Let us note that $$
\begin{aligned}
  &\ \ \ \bar\Phi_{\alpha+e_i} P_{f,\by,l}(z,\bx_{\alpha(1)}^{(i)})\\
  & =   \sum_{\vert \beta\vert =0} \bar\Phi_{\beta}f(\bar\by_{\beta})\bar\Phi_{\alpha+e_i}L_{\beta,\by}(z,\bx_{\alpha(1)}^{(i)})+ \cdots+\sum_{\vert\beta\vert =l}\bar\Phi_{\beta} f(\bar\by_{\beta})\bar\Phi_{\alpha+e_i}L_{\beta,\by}(z,\bx_{\alpha(1)}^{(i)})\\
  & =   \sum_{\vert \beta\vert =0} \bar\Phi_{\beta+e_i}f(\bar\by_{\beta+e_i})\bar\Phi_{\alpha+e_i}L_{\beta+e_i,\by}(z,\bx_{\alpha(1)}^{(i)})+ \cdots+\sum_{\vert\beta\vert =l-1}\bar\Phi_{\beta+e_i} f(\bar\by_{\beta+e_i})\bar\Phi_{\alpha+e_i}L_{\beta+e_i,\by}(z,\bx_{\alpha(1)}^{(i)})\\
  &= \bar\Phi_{\alpha+e_i}\left( L_{e_i,\by} \times\left(\sum_{\vert \beta\vert =0} \bar\Phi_{\beta+e_i}f(\bar\by_{\beta+e_i})L_{\beta,\by}+ \cdots+\sum_{\vert\beta\vert =l-1}\bar\Phi_{\beta+e_i} f(\bar\by_{\beta+e_i})L_{\beta,\by}\right)\right)(z,\bx_{\alpha(1)}^{(i)})\\
  &= \bar\Phi_{\alpha+e_i}\left( L_{e_i,\by}\times\left(\sum_{\vert \beta\vert =0} \bar\Phi_{\beta}f^i_{y_i}(\bar\by_{\beta})L_{\beta,\by}+ \cdots+\sum_{\vert\beta\vert =l-1}\bar\Phi_{\beta} f_{y_i}^i(\bar\by_{\beta})L_{\beta,\by}\right)\right)(z,\bx_{\alpha(1)}^{(i)})\\
  &= \bar\Phi_{\alpha+e_i}\left( L_{e_i,\by} \times\left(P_{f^i_{y_i},\by,l-1}\right)\right) \left(z,\bx_{\alpha(1)}^{(i)}\right)\\
  &= L_{e_i,\by}(z)\bar\Phi_{\alpha+e_i}\left( P_{f^i_{y_i},\by,l-1}\right)(z,\bx_{\alpha(1)}^{(i)})+ \bar\Phi_{\alpha}\left(P_{f^i_{y_i},\by,l-1}\right) (\bx_{\alpha(1)}).
  \end{aligned}$$
{In the equations above, by $\times$ we mean multiplication of functions.}
The last line follows from \cref{chain chain}, which is a version of the chain rule. 
 \end{proof}

 \begin{theorem}\label{taylor}
Suppose that $f:B(\by,r)\subset B(0,1)^d\to \Q_\nu$ be a $C^{l+1}$ map, then for any $0\leq \vert\eta\vert \leq l$,
\begin{equation}\label{star}
\begin{aligned}
&\sup_{\bx_{\eta(1)}\in B(\by,r)^{\eta(1)}} \vert \bar\Phi_{\eta} f(\bx_{\eta(1)})-\bar \Phi_{\eta} P_{f, \by, l}(\bx_{\eta(1)})\vert \\
& \leq r^{l-\vert \eta\vert} \sup_{\substack{ \bw_1,\bw_2\in B(\by,r)^{\beta(1)},\\
\vert\beta\vert=l}}
\vert \bar\Phi_\beta f(\bw_1)-\bar \Phi_\beta f(\bw_2)\vert.
\end{aligned}
\end{equation}
\end{theorem}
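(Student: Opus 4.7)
I will prove \cref{star} by downward induction on $|\eta|$, starting from the base case $|\eta|=l$ and decreasing. Throughout, let $M$ denote the supremum appearing on the right-hand side of \cref{star}, and write $Q:=P_{f,\by,l}$ for brevity.

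The engine of the argument is the preliminary identity
\begin{equation*}
\bar\Phi_\eta Q(\bar\by_\eta)=\bar\Phi_\eta f(\bar\by_\eta)\qquad\text{for all multi-indices }\eta\text{ with }|\eta|\le l.
\end{equation*}
By \cref{beta} this is equivalent to $\partial_\eta Q(\by)=\partial_\eta f(\by)$, which is immediate from the definition of $Q$: expanding $\partial_\eta L_{\beta,\by}(\bx)=\partial_\eta\prod_j(x_j-y_j)^{i_j}$ and evaluating at $\bx=\by$, only the term $\beta=\eta$ survives and contributes exactly $\eta!\,\bar\Phi_\eta f(\bar\by_\eta)$.

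\medskip

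\textbf{Base case $|\eta|=l$.} I will show that $\bar\Phi_\eta Q$ is \emph{constant} on $B(\by,r)^{\eta(1)}$, equal to $\bar\Phi_\eta f(\bar\by_\eta)$. Indeed, for any $\beta$ appearing in $Q$ with $\beta\ne\eta$ one has $|\beta|\le l=|\eta|$, so some coordinate $\beta_j<\eta_j$; taking the $\eta_j$-th difference quotient in the $j$-th variable of $(x_j-y_j)^{\beta_j}$ (a polynomial of degree strictly below $\eta_j$) annihilates it, so $\bar\Phi_\eta L_{\beta,\by}\equiv 0$. For $\beta=\eta$, each factor $\bar\Phi^{\eta_j}_j(x_j-y_j)^{\eta_j}\equiv 1$ (the $n$-th difference quotient of a monic polynomial of degree $n$), so $\bar\Phi_\eta L_{\eta,\by}\equiv 1$. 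Hence the left-hand side of \cref{star} reduces to $|\bar\Phi_\eta f(\bx_{\eta(1)})-\bar\Phi_\eta f(\bar\by_\eta)|\le M=r^0M$.

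\medskip

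\textbf{Inductive step.} Assume \cref{star} holds for every multi-index of size $k+1$, where $0\le k\le l-1$, and let $|\eta|=k$. The key tool is the elementary telescoping identity that follows from the defining recursion of $\bar\Phi^{i_j+1}_j$: for any $C^{|\eta|+1}$ function $g$ on $B(\by,r)^d$ and any single argument of the $j$-th variable group,
\begin{equation*}
\bar\Phi_\eta g(\ldots,x,\ldots)-\bar\Phi_\eta g(\ldots,y_j,\ldots)=(x-y_j)\,\bar\Phi_{\eta+e_j}g(\ldots,x,y_j,\ldots).
\end{equation*}
Iterating this to replace each coordinate of $\bx_{\eta(1)}$ by the corresponding coordinate of $\bar\by_\eta$ one at a time produces
\begin{equation*}
\bar\Phi_\eta g(\bx_{\eta(1)})-\bar\Phi_\eta g(\bar\by_\eta)=\sum_{(j,m)}(x_{j,m}-y_j)\,\bar\Phi_{\eta+e_j}g(\bt_{j,m}),\qquad \bt_{j,m}\in B(\by,r)^{(\eta+e_j)(1)}.
\end{equation*}
Applying this with $g=f$ and $g=Q$ and subtracting, the constant part vanishes by the preliminary identity, leaving
\begin{equation*}
\bar\Phi_\eta(f-Q)(\bx_{\eta(1)})=\sum_{(j,m)}(x_{j,m}-y_j)\,\bar\Phi_{\eta+e_j}(f-Q)(\bt_{j,m}).
\end{equation*}
Each $|\bar\Phi_{\eta+e_j}(f-Q)(\bt_{j,m})|\le r^{l-k-1}M$ by the inductive hypothesis, and $|x_{j,m}-y_j|\le r$, so the ultrametric inequality delivers $|\bar\Phi_\eta(f-Q)(\bx_{\eta(1)})|\le r\cdot r^{l-k-1}M=r^{l-|\eta|}M$, as required.

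\medskip

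\textbf{Main obstacle.} The genuine work is in justifying the iterated multivariate telescoping and tracking that the auxiliary arguments $\bt_{j,m}$ remain inside $B(\by,r)^{(\eta+e_j)(1)}$. Since $\bar\Phi_{\eta+e_j}$ is symmetric within each variable group, each single-coordinate swap reduces to the one-variable defining recursion, so the argument is essentially algebraic bookkeeping rather than genuine analysis.
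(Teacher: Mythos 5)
Your proof is correct, but it takes a genuinely different route from the paper's. The paper argues by forward induction on the degree $l$ of the Taylor polynomial: it reduces $f$ to the functions $f^i_{y_i}(\bx)=\bar\Phi_{e_i}f(y_i,\bx)$ (whose Taylor polynomials have degree $l-1$), feeds this through the ``chain'' identity \eqref{chain} relating $\bar\Phi_{\alpha+e_i}P_{f,\by,l}$ to $\bar\Phi_{\alpha+e_i}P_{f^i_{y_i},\by,l-1}$ and $\bar\Phi_{\alpha}P_{f^i_{y_i},\by,l-1}$, and then handles the remaining case $|\eta|=0$ by invoking the explicit multivariate Taylor remainder formula of \cref{TAYLOR}, whose proof occupies the appendix. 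You instead run a downward induction on $|\eta|$ applied directly to $g=f-P_{f,\by,l}$, resting on two clean observations: that $\bar\Phi_\eta g(\bar\by_\eta)=0$ for all $|\eta|\le l$ (only the term $\beta=\eta$ of the Taylor polynomial survives at the diagonal), and that at top order $|\eta|=l$ the difference quotient $\bar\Phi_\eta P_{f,\by,l}$ is the constant $\bar\Phi_\eta f(\bar\by_\eta)$ because difference quotients of order $\eta_j$ annihilate the lower-degree factors of $L_{\beta,\by}$. The single-coordinate telescoping identity (the same mechanism as \eqref{C1}) then transfers the bound from order $k+1$ to order $k$ while gaining a factor of $r$ from $|x_{j,m}-y_j|\le r$ and the ultrametric inequality. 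Both arguments are sound; your version is shorter and entirely self-contained, bypassing both the chain lemma and the appendix, whereas the paper's route has the side benefit of producing the explicit remainder formula of \cref{TAYLOR}, which the authors present as a result of independent interest.
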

\begin{proof}
 We proceed by induction on $l$. If $l=0$, then $\vert\eta\vert=0$ and $P_{f,\by,0}(\bx)=f(\by)$, hence the statement of the theorem follows for $l=0$. Assume the the statement is true for $l-1$. We examine $\eta$ a multi index with $\vert \eta\vert\leq l$ in three cases, $\vert\eta\vert=0, l$ and $1\leq \vert \eta\vert \leq l-1.$ Let $l-1\geq k\geq 1$ and  fix any $1\leq i\leq d$. Take $\alpha$ be a multi index such that $\vert\alpha\vert =k-1.$ Let 
 $$\mathbf{M}= \sup_{\substack{ \bw_1,\bw_2\in B(\by,r)^{\beta(1)},\\
\vert\beta\vert=l}}
\vert \bar\Phi_\beta f(\bw_1)-\bar \Phi_\beta f(\bw_2)\vert.$$
 By the induction hypothesis, we have for any $\bx_{\alpha(1)}\in B(\by,r)^{\alpha(1)}$,
 \begin{equation}\label{hyp1}
     \vert \bar\Phi_{\alpha}P_{f^i_{y_i},\by,l-1} (\bx_{\alpha(1)})-\bar\Phi_\alpha f^i_{y_i}(\bx_{\alpha(1)})\vert \leq r^{l-k}\mathbf{M}.
 \end{equation}

 Also using induction we have for any $(z,\bx_{\alpha(1)}^{(i)})\in B(\by,r)^{\alpha+e_i}$,
 \begin{equation}\label{hyp2}
     \vert \bar\Phi_{\alpha+e_i} P_{f^i_{y_i},\by,l-1} (z,\bx_{\alpha(1)}^{(i)})- \bar\Phi_{\alpha+e_i} f^i_{y_i} (z,\bx_{\alpha(1)}^{(i)})\vert\leq r^{l-1-k}\mathbf{M}.
 \end{equation}
  On the right hand side of \cref{hyp1} and \cref{hyp2} $\mathbf{M}$ appears because for any multi index $\beta$ with $\vert \beta\vert=l-1, $ $\bar\Phi_{\beta} f_{y_i}^i(\bw)= \bar \Phi_{\beta+e_i}f(y_i,\bw^{(i)}).$

 Note that $$\begin{aligned}\bar\Phi_{\alpha+e_i}f(z,\bx_{\alpha(1)}^{(i)})&=(z-y_i)\bar\Phi_{\alpha+2e_i}f(z,y_i,\bx_{\alpha(1)}^{(i)})+\bar\Phi_{\alpha+e_i}f(y_i,\bx_{\alpha(1)}^{(i)})\\
 & =(z-y_i)\bar\Phi_{\alpha+e_i}f_{y_i}^i(z,\bx_{\alpha(1)}^{(i)})+\bar\Phi_{\alpha}f^i_{y_i}(\bx_{\alpha(1)}).\end{aligned}$$
 In the last line of the above equality, we use the definition of the function $f_{y_i}^i(\bx)=\bar\Phi_{e_i}f(y_i,\bx^{(i)}).$
 Therefore, by \cref{chain rule} we have the following:
 \begin{equation}\label{induction1}\begin{aligned}
   \vert\bar\Phi_{\alpha+e_i} P_{f,\by,l}(z,\bx_{\alpha(1)}^{(i)})-\bar\Phi_{\alpha+e_i}f(z,\bx_{\alpha(1)}^{(i)})\vert \leq r^{l-k}\mathbf{M}.
 \end{aligned}\end{equation}
 Now let us take a multi-index $\alpha$, such that  $\vert \alpha\vert = l-1$, then we have $$\bar\Phi_{\alpha+e_i}P_{f,\by,l}(z,\bx_{\alpha(1)}^{(i)})= \bar\Phi_{\alpha+e_i}f(\bar\by_{\alpha+e_i}).$$
 Therefore for any $(z,\bx_{\alpha(1)}^{(i)})\in B(\by,r)^{\alpha+e_i}$,
 \begin{equation}\label{induction2}
  \vert\bar\Phi_{\alpha+e_i}f(\bar\by_{\alpha+e_i})-\bar\Phi_{\alpha+e_i}f(z,\bx_{\alpha(1)})\vert\leq r^{0}\mathbf{M}.
 \end{equation}
 Now suppose $k=0$, then by \cref{TAYLOR} we have,
 $$
 \vert f(\bx)-P_{f,\by,l}(\bx)\vert  \leq \max_{\substack{\vert\beta\vert=l+1,\\ \beta=(0,\cdots,0,i_j,\cdots,i_d),\\i_j>0\\j=1,\cdots,d}}\vert \bar\Phi_{\beta}f(x_1,\cdots,x_j,y_j^{i_j},y_{j+1}^{i_{j+1}+1},\cdots,y_d^{i_d+1}) L_{\beta,\by}(\bx)\vert
 $$
Note that for $\beta=(0,\cdots,0,i_j,\cdots,i_d), i_j>0$ above, $$L_{\beta,\by}(\bx)=\prod_{s=j}^d(x_s-y_s)^{i_s}=(x_{j}-y_{j})L_{\beta-e_j,\by}(\bx).$$  Since $\bx\in B(\by,r),$ and $\vert \beta-e_j\vert=l$, the right hand side of the above inequality can be bounded above by the following,
 $$\begin{aligned}
 &\leq r^{l} \max_{\substack{\vert\beta\vert=l+1,\\ \beta=(0,\cdots,0,i_j,\cdots,i_d),\\i_j>0\\j=1,\cdots,d}}\vert \bar\Phi_{\beta}f(x_1,\cdots,x_j,y_j^{i_j},y_{j+1}^{i_{j+1}+1},\cdots,y_d^{i_d+1})(x_j-y_j)\vert.
 \end{aligned}
 $$

 Since for any multi-index $
 \beta$, $$\begin{aligned}
 &\bar\Phi_{\beta}f(x_1,\cdots, x_j,y_j^{i_j},y_{j+1}^{i_{j+1}+1},\cdots,y_d^{i_d+1})(x_j-y_j)\\
 &=\bar\Phi_{\beta-e_j}f(x_1,\cdots,x_j,y_j^{i_j-1},y_{j+1}^{i_{j+1}+1},\cdots,y_d^{i_d+1})-\bar\Phi_{\beta-e_j}f(x_1,\cdots,x_{j-1},,y_j^{i_j},y_{j+1}^{i_{j+1}+1},\cdots,y_d^{i_d+1}),\end{aligned}$$
 we conclude that for any $\bx\in B(\by,r)$, we have that
 \begin{equation}\label{induction3}
     \vert f(\bx)-P_{f,\by,l}(\bx)\vert \leq r^{l}\mathbf{M}.\end{equation}
 Hence \cref{induction1}, \cref{induction2} and \cref{induction3} yield the theorem.

\end{proof}

  For $\mathbf{f}=(f_1,\cdots,f_n)$
we define $$ \mathcal{S}_{\f}=\{c_0+c_1f_1+\cdots+c_nf_n~|~\max_{i=0}^n \vert c_i\vert=1\},$$ where $f_i:B\subset B(0,1)^d\to \Q_\nu$ for all $i=1,\cdots,n$.

\begin{lemma}\label{equi}
Suppose that $\f=(f_1,f_2,\cdots,f_n):B\subset B(0,1)^d\to\Q_\nu^n$ is a $C^{l+1}$ map, and $\bx_0\in B$. Then for any $\varepsilon>0$ there exists a neighbourhood $V\subset B$ of $\bx_0$ such that for any ball $ B(\by,r)\subset V$, for any multi-index $\beta$, $0\leq \vert\beta\vert\leq l$, and for any $g\in \mathcal{S}_{\f}$,
we have \begin{equation}
    \sup_{\bx_{\beta(1)}\in B(\by,r)^{\beta(1)}} \vert \bar\Phi_{\beta} g(\bx_{\beta(1)})-\bar \Phi_{\beta} P_{g, \by, l}(\bx_{\beta(1)})\vert<\varepsilon r^{l-\vert \beta\vert}.\end{equation}
\end{lemma}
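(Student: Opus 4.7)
The plan is to deduce the lemma directly from the Taylor approximation estimate of \cref{taylor} applied to $g$, together with the continuity of the difference quotients $\bar\Phi_\gamma f_i$ at $\bx_0$ and uniformity coming from the normalization $\max_i|c_i|=1$.

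First I would apply \cref{taylor} with $f$ replaced by $g\in \mathcal{S}_{\f}$ and $\eta=\beta$, obtaining
\begin{equation*}
\sup_{\bx_{\beta(1)}\in B(\by,r)^{\beta(1)}} \vert \bar\Phi_{\beta} g(\bx_{\beta(1)})-\bar \Phi_{\beta} P_{g,\by,l}(\bx_{\beta(1)})\vert
\le r^{l-|\beta|} M_g(\by,r),
\end{equation*}
where
\begin{equation*}
M_g(\by,r):=\sup_{\substack{\bw_1,\bw_2\in B(\by,r)^{\gamma(1)},\\ |\gamma|=l}}\vert \bar\Phi_\gamma g(\bw_1)-\bar\Phi_\gamma g(\bw_2)\vert.
\end{equation*}
Thus the lemma reduces to showing that $V$ can be chosen so that $M_g(\by,r)<\varepsilon$ whenever $B(\by,r)\subset V$, uniformly over $g\in\mathcal{S}_{\f}$.

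The key observation is that, writing $g=c_0+\sum_{i=1}^n c_i f_i$ with $\max_i|c_i|=1$, the operator $\bar\Phi_\gamma$ annihilates constants as soon as $|\gamma|\ge 1$, so for such $\gamma$ one has $\bar\Phi_\gamma g=\sum_{i=1}^n c_i\,\bar\Phi_\gamma f_i$; the ultrametric inequality together with $|c_i|\le 1$ then gives
\begin{equation*}
\vert \bar\Phi_\gamma g(\bw_1)-\bar\Phi_\gamma g(\bw_2)\vert \le \max_{1\le i\le n}\vert \bar\Phi_\gamma f_i(\bw_1)-\bar\Phi_\gamma f_i(\bw_2)\vert.
\end{equation*}
(The case $l=0$, i.e.\ $|\gamma|=0$, is handled identically after noting the constant $c_0$ cancels in the difference $g(\bw_1)-g(\bw_2)$.) This removes the dependence on the coefficients $c_i$, reducing the problem to a uniform estimate for the finitely many functions $\bar\Phi_\gamma f_i$ with $|\gamma|=l$ and $1\le i\le n$.

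Next, since each $f_i$ is $C^{l+1}$, the difference quotient $\bar\Phi_\gamma f_i$ is continuous on its domain, in particular at the point $\bar\bx_{0,\gamma}=(\bx_0^{i_1+1},\ldots,\bx_0^{i_d+1})$. So for every $\varepsilon>0$ and every pair $(i,\gamma)$ with $|\gamma|=l$, continuity yields a neighbourhood $V_{i,\gamma}$ of $\bx_0$ such that any two points in $V_{i,\gamma}^{\gamma(1)}$ produce values of $\bar\Phi_\gamma f_i$ that differ by less than $\varepsilon$. Taking $V$ to be the intersection of these finitely many $V_{i,\gamma}$ (over all multi-indices $\gamma$ with $|\gamma|=l$ and all $i=1,\dots,n$), we obtain a single neighbourhood that works for every $g\in \mathcal{S}_{\f}$. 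For $B(\by,r)\subset V$ we have $B(\by,r)^{\gamma(1)}\subset V^{\gamma(1)}$, hence $M_g(\by,r)<\varepsilon$, and the lemma follows by combining this with the Taylor bound above.

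I do not anticipate a serious obstacle here; the argument is essentially bookkeeping. The only slightly delicate point is to verify that the normalization $\max_i|c_i|=1$ is exactly what is needed: it ensures that $|c_i|\le 1$, which in the ultrametric setting is enough to pass the $\varepsilon$ uniformly from the $f_i$ to an arbitrary $g\in\mathcal{S}_\f$, without producing a dependence on $\|g\|$ or on the choice of coefficients.
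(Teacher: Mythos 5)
Your proof is correct and follows the same route as the paper: reduce via \cref{taylor} to a uniform bound on the oscillation of $\bar\Phi_\gamma g$ for $|\gamma|=l$, then obtain that bound near $\bx_0$ uniformly over $g\in\mathcal{S}_{\f}$. The paper simply asserts that the family $\{\bar\Phi_\gamma g : |\gamma|=l,\ g\in\mathcal{S}_{\f}\}$ is equicontinuous; your explicit justification via $\bar\Phi_\gamma g=\sum_i c_i\bar\Phi_\gamma f_i$, $|c_i|\le 1$, and the ultrametric inequality is exactly the argument that makes that assertion precise.
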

\begin{proof}
Using \cref{taylor} it is enough to show that for any $\varepsilon>0$, there exists a neighbourhood $V$ such that for any $B(\by,r)\subset V$, we have $$ \sup_{\substack{\bw_1,\bw_2\in B(\by,r)^{\beta(1)},\\
\vert \beta\vert=l}} \vert \bar\Phi_{\beta} g(\bw_1)-\bar \Phi_{\beta} g(\bw_2)\vert<\varepsilon $$ for any $g\in \mathcal{S}_{\f}$. Since the family $\{\bar\Phi_{\beta} g ~|~\vert \beta\vert=l, g\in \mathcal{S}_{\f} \}$ is equicontinuous, for any $\varepsilon>0$, we can always guarantee a neigbourhood $V(\varepsilon)$ such that $\vert \bar\Phi_{\beta}g(\bw_1)-\bar\Phi_{\beta}g(\bw_2)\vert<\varepsilon$ for all $\beta$ with $\vert \beta\vert=l$, $B(\by,r)\subset V(\varepsilon)$, $\bw_1,\bw_2\in B(\by,r)^{\beta(1)}$ and $g\in \mathcal{S}_{\f}$.
\end{proof}
\section{Nondegeneracy and normalization}

\begin{lemma}\label{bounds}
Suppose that $\f=(f_1,f_2,\cdots,f_n):B\subset B(0,1)^d\to\Q_\nu^n$ is a $C^{l+1}$ map, which is $l$-nondegenerate at $\bx_0$. Then there is an open set $\bx_0\in V_0\subset B$ and $\eta>0$ such that for any $B(\by,r)\subset V_0$ , and $g\in \mathcal{S}_{\f}$,
\begin{equation}
    \Vert g\Vert_{B(\by,r)}\geq \eta r^l
\end{equation} and \begin{equation}
    \ \frac{\Vert P_{g,\by,l}\Vert_{B(\by,r)}}{\Vert g\Vert_{B(\by,r)}}=1.
\end{equation}

\end{lemma}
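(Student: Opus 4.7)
I will prove both claims together by establishing, on a suitably small neighbourhood $V_0$ of $\bx_0$, the two-part chain
$$|g|_{B(\by,r)}=|P_{g,\by,l}|_{B(\by,r)}\geq \eta r^l.$$
The lower-bound half is deduced from $l$-nondegeneracy combined with the uniform polynomial bound of \cref{poly2}; the equality half is a pure ultrametric consequence of \cref{equi}.

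First I would extract a uniform lower bound on the difference quotients at $\bx_0$. The coefficient set $\{(c_0,\dots,c_n)\in\Q_\nu^{n+1}:\max_i|c_i|_\nu=1\}$ is compact, and on it $g\mapsto \max_{|\beta|\leq l}|\bar\Phi_\beta g(\bar{\bx_0}_\beta)|$ is continuous and strictly positive: if $(c_1,\dots,c_n)=0$ then $|\bar\Phi_0 g(\bx_0)|=|c_0|=1$; otherwise $l$-nondegeneracy of $\f$ at $\bx_0$ (the $\Q_\nu$-span of $\{\partial_\beta\f(\bx_0):|\beta|\leq l\}$ is $\Q_\nu^n$) forces $\partial_\beta(\sum c_if_i)(\bx_0)\neq 0$, hence $\bar\Phi_\beta g(\bar{\bx_0}_\beta)\neq 0$, for some $|\beta|\leq l$. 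Compactness then yields a uniform $\eta_0>0$. Since the coefficients are bounded by $1$, the ultrametric inequality transfers the equicontinuity of $\{\bar\Phi_\beta f_i\}$ to the larger family $\{\bar\Phi_\beta g:g\in\mathcal{S}_\f\}$, so the rule ``$|a-b|_\nu<|a|_\nu\Rightarrow|b|_\nu=|a|_\nu$'' lets me shrink to $V_0$ on which $\max_{|\beta|\leq l}|\bar\Phi_\beta g(\bar\by_\beta)|\geq\eta_0$ uniformly in $\by\in V_0$ and $g\in\mathcal{S}_\f$.

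Next I would convert this into a lower bound on the Taylor polynomial. Setting $Q(\bz):=P_{g,\by,l}(\by+r\bz)$, a direct expansion gives $Q(\bz)=\sum_\gamma r^{|\gamma|}\bar\Phi_\gamma g(\bar\by_\gamma)\bz^\gamma\in\mathcal{P}_{d,l}$, so that $\bar\Phi_\beta Q(\bar 0_\beta)=r^{|\beta|}\bar\Phi_\beta g(\bar\by_\beta)$ and $|Q|_{B(0,1)^d}=|P_{g,\by,l}|_{B(\by,r)}$. Applying \cref{poly2} to $Q$ yields $r^{|\beta|}|\bar\Phi_\beta g(\bar\by_\beta)|\leq S|P_{g,\by,l}|_{B(\by,r)}$, with $S$ depending only on $d,l$. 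Maximising over $|\beta|\leq l$ and using $r\leq 1$ (from $V_0\subset B(0,1)^d$) gives $|P_{g,\by,l}|_{B(\by,r)}\geq(\eta_0/S)r^l$. Finally, by \cref{equi} with any fixed $\varepsilon<\eta_0/S$, I shrink $V_0$ once more so that $|g-P_{g,\by,l}|_{B(\by,r)}<\varepsilon r^l<|P_{g,\by,l}|_{B(\by,r)}$; applying the ultrametric inequality at a point where $|P_{g,\by,l}|$ attains its sup, as well as at every other point, then forces $|g|_{B(\by,r)}=|P_{g,\by,l}|_{B(\by,r)}$, establishing both claims with $\eta:=\eta_0/S$.

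The main obstacle is precisely the lower bound in the rescaling step. In the Archimedean case one could hope to read off the sup norm of a polynomial from the maxima of its Taylor coefficients, but in $p$-adic this fails: for instance $x^p-x$ on $\Z_p$ has coefficients of absolute value $1$, yet its sup on $\Z_p$ is at most $p^{-1}$. The correct replacement is exactly \cref{poly2}, whose proof passes through norm-equivalence on the finite-dimensional space $\mathcal{P}_{d,l}$; this is what makes the whole argument work in the $p$-adic setting.
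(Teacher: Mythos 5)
Your proof is correct, and it follows the same two\-/step architecture as the paper's: first a uniform lower bound $\vert P_{g,\by,l}\vert_{B(\by,r)}\geq \eta r^l$, then \cref{equi} plus the ultrametric isosceles property to force $\vert g\vert_{B(\by,r)}=\vert P_{g,\by,l}\vert_{B(\by,r)}$ (and hence the same lower bound for $g$). Where you genuinely diverge is in how the polynomial lower bound is produced. The paper argues directly on the family of recentred and rescaled Taylor polynomials $Q^{R}_{g,\by}(\bx)=\nu^{lR}P_{g,\by,l}(\nu^R\bx+\by)$, invoking nondegeneracy and compactness of $\mathcal{S}_{\f}$ to assert that this family is bounded away from $0$ in sup norm on $B(0,1)^d$. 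You instead (i) extract from $l$-nondegeneracy and compactness of the coefficient sphere a uniform $\eta_0>0$ with $\max_{\vert\beta\vert\leq l}\vert\bar\Phi_\beta g(\bar{\bx_0}_\beta)\vert\geq\eta_0$, (ii) propagate this to nearby centres $\by$ by equicontinuity and the rule $\vert a-b\vert<\vert a\vert\Rightarrow\vert b\vert=\vert a\vert$, and (iii) convert the coefficient bound into a sup-norm bound via the norm-equivalence constant $S$ of \eqref{poly2}, obtaining the explicit value $\eta=\eta_0/S$. This buys you two things the paper's version leaves implicit: the double rescaling step (where one must check that boundedness away from $0$ survives the substitution $\bx\mapsto\nu^R\bx$, which scales different homogeneous components differently) is avoided entirely, and the dependence of $\eta$ on the data is transparent. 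The cost is that your argument leans on \cref{poly} (whose proof is just norm equivalence on $\mathcal{P}_{d,l}$, so nothing circular), whereas the paper's proof of this lemma is self-contained modulo its compactness assertion. All the small points you need do check out: $\bar\Phi_\beta Q(\bar{0}_\beta)=r^{\vert\beta\vert}\bar\Phi_\beta g(\bar\by_\beta)$ from $L_{\beta,\by}(\by+r\bz)=r^{\vert\beta\vert}\bz^\beta$, the use of $r\le 1$ to pass from $r^{\vert\beta\vert}$ to $r^l$, and attainment of the sup of $\vert P_{g,\by,l}\vert$ on the compact ball for the final isosceles step.
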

\begin{proof}
Let $V_0$ be an open ball such that $\f=(f_1,\cdots,f_n)$ is nondegenerate for any $\by\in V_0$. Let us rescale $P_{g,\by,l}$  as $$ Q_{g,\by}(\bx):=P_{g,\by,l}(\bx+\by), \bx\in \Q_\nu^d,$$ where $g\in \mathcal{S}_{\f}$. Let us equip $\mathcal{P}_{d,l}$ with the norm $\Vert \cdot\Vert_{d,l}$ that is the maximum of all the $\nu$-norms of all coefficients. By nondegeneracy of $f$ and compactness of $\mathcal{S}_{\f}$ and $V_0$, we have that the set $\{ Q_{g,\by}~:~g\in\mathcal{S}_{\f}, \by\in V_0\}$ is bounded away from $0$, which means for all $g\in \mathcal{S}_{\f}$, $\by\in V_0$, $\Vert Q_{g,\by}\Vert_{d,l}\geq \eta_1$ for some $\eta_1>0$. Moreover, we can  rescale again, for each $\by\in V_0$ and $r=\nu^{-R}>0$ such that $B(\by,r)\subset V_0$, we define $$
Q_{g,\by}^{R}(\bx):= \nu^{-lR} Q_{g,\by}(\nu^{R} \bx).$$  Since $\{ Q_{g,\by}~:~g\in\mathcal{S}_{\f}, \by\in V_0\}$ is bounded away from $0$, we will have $\{ Q_{g,\by}^{R}~:~g\in\mathcal{S}_{\f}, B(\by,\nu^{-R})\subset V_0\}$ is also bounded away from $0$. The reason is $\Vert Q_{g,\by}^{R}\Vert_{d,l} \geq  \Vert Q_{g,\by}\Vert_{d,l},$ since norm of $\beta$-th coefficient in $Q_{g,\by}^{R}$ is greater than norm of $\beta$-th coefficient in $Q_{g,\by}$ for every multi-index $\beta, \vert \beta\vert\leq l.$ 
By comparing with the norm $P\to \Vert P\Vert_{B(\mathbf{0},1)},$ the above implies that for all $g\in \mathcal{S}_{\f}$, $ \Vert Q_{g,\by}^{R}\Vert_{B(0,1)^d}\geq \eta$ for some $\eta>0$. This gives us for all $B(\by,r)\subset V_0$ and $g\in\mathcal{S}_{\f}$.
$$
 \Vert P_{g,\by,l}\Vert_{B(\by,r)}= \nu^{-lR} \Vert Q_{g,\by}^{R}\Vert_{B(0,1)^d} \geq r^l\eta.$$ Now using \cref{equi}, for $\vert\beta\vert=0$, for sufficiently small $V_0$, $$
 \sup_{\bx\in B(\by,r)}\vert g(\bx)-P_{g,\by,l}(\bx)\vert\leq \frac{1}{\nu} \nu^{-lR}\eta=\frac{1}{\nu}r^l\eta.$$ Hence, we have
 \begin{equation}\label{k3}
     \Vert g\Vert_{B(\by,r)}\geq  r^{l}\eta.
     \end{equation}
 Note,

$$\Vert P_{g,\by,l}\Vert_{B(\by,r)}\leq \max(\frac{1}{\nu}r_{\nu}^l\eta, \Vert g\Vert_{B(\by,r)}).
$$

 \noindent Hence by \cref{k3} we have, $$\Vert P_{g,\by,l}\Vert_{B(\by,r)}\leq \Vert g\Vert_{B(\by,r)}.$$ Similarly we get, $$\Vert g\Vert_{B(\by,r)} \leq \Vert P_{g,\by,l}\Vert_{B(\by,r)}.$$
 Therefore,
$$
  \frac{\Vert P_{g,\by,l}\Vert_{B(\by,r)}}{\Vert g\Vert_{B(\by,r)}}=1.
$$

\end{proof}

Similar to the way we have normalized polynomials to make them functions in the unit ball $B(0,1)^d$, we will normalize functions in $\mathcal{S}_{\f}$. So for $V_0$ a sufficiently small neighbourhood of $\bx_0$, $B(\by,r)\subset V_0$ and $g\in\mathcal{S}_{\f}$, and $r>0$ is a power of $\nu,$ we define the function $$
g_{r,\by,g}(\bx):= \Vert g\Vert_{B(\by,r)} g(\by+r^{-1}\bx),$$ which is defined on $B(0,1)^d$. We note that the above normalization and shift in the definition of $g_{r,\by,g}$ are taken such that using $\nu$-adic norm, $\Vert g_{r,\by,g}\Vert_{B(0,1)^d}=1$. We also consider the collection
$$
\mathcal{G}(\f,V):=\{g_{r,\by,g}~|~g\in \mathcal{S}_{\f}, B(\by,r)\subset V\}.
$$
\begin{lemma}\label{LL}
For any $\varepsilon>0$ there exists a neighbourhood $V\subset B$ of $\bx_0$ such that for any $\phi\in \mathcal{G}(\f,V)$ one has
\begin{equation}
    \max_{\vert\beta\vert\leq l+1} \sup_{\substack{\bx_{\beta(1)}\in B(\mathbf{0},1)^{\beta(1)}}}\vert (\bar \Phi_{\beta}\phi-\bar\Phi_{\beta} P_{\phi,\mathbf{0},l})(\bx_{\beta(1)})\vert<\varepsilon.
\end{equation}
\end{lemma}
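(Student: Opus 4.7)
The plan is to reduce the statement for the normalized functions $\phi \in \mathcal{G}(\f,V)$ to the unnormalized statement already established in \cref{equi}, using the facts from \cref{bounds} to control the normalization factor from below.

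First, I would carefully track how difference quotients transform under the affine rescaling $\bx \mapsto \by + r\bx$ together with multiplication by the constant $|g|_{B(\by,r)}^{-1}$ that produces $\phi=g_{r,\by,g}$ from $g$. By linearity and the elementary chain rule for difference quotients (which in the ultrametric case follows directly from the inductive definition of $\Phi^k$), one obtains for every multi-index $\beta$ with $|\beta|\le l$ the identity
\begin{equation}\label{eqLLtransform}
\bar\Phi_\beta \phi(\bx_{\beta(1)}) \;=\; \frac{r^{|\beta|}}{|g|_{B(\by,r)}}\,\bar\Phi_\beta g\bigl(\by + r\bx_{\beta(1)}\bigr),
\end{equation}
where the argument $\by + r\bx_{\beta(1)}$ is meant componentwise. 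Since the Taylor polynomial $P_{\phi,\mathbf{0},l}$ is built from the values $\bar\Phi_\beta \phi(\mathbf{0}_{\beta(1)})$ and the coefficients $L_{\beta,\mathbf{0}}$, a short direct calculation shows that the same transformation rule applies to the Taylor polynomial, namely
\begin{equation}\label{eqLLpoly}
\bar\Phi_\beta P_{\phi,\mathbf{0},l}(\bx_{\beta(1)}) \;=\; \frac{r^{|\beta|}}{|g|_{B(\by,r)}}\,\bar\Phi_\beta P_{g,\by,l}\bigl(\by + r\bx_{\beta(1)}\bigr).
\end{equation}

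Subtracting \eqref{eqLLpoly} from \eqref{eqLLtransform} and using the fact that the map $\bx_{\beta(1)}\mapsto \by+r\bx_{\beta(1)}$ sends $B(\mathbf{0},1)^{\beta(1)}$ into $B(\by,r)^{\beta(1)}$, I would apply \cref{equi}: given any $\varepsilon'>0$ there is a neighbourhood $V\subset V_0$ of $\bx_0$ (with $V_0$ as in \cref{bounds}) such that for every ball $B(\by,r)\subset V$, every $g\in\mathcal{S}_\f$, and every $|\beta|\le l$,
\[
\sup_{\bw_{\beta(1)}\in B(\by,r)^{\beta(1)}} \bigl|\bar\Phi_\beta g(\bw_{\beta(1)}) - \bar\Phi_\beta P_{g,\by,l}(\bw_{\beta(1)})\bigr| \;<\; \varepsilon'\,r^{l-|\beta|}.
\]
Combining this with \eqref{eqLLtransform} and \eqref{eqLLpoly} yields the pointwise bound
\[
\bigl|\bar\Phi_\beta\phi(\bx_{\beta(1)}) - \bar\Phi_\beta P_{\phi,\mathbf{0},l}(\bx_{\beta(1)})\bigr| \;\le\; \frac{r^{|\beta|}}{|g|_{B(\by,r)}}\cdot\varepsilon'\,r^{l-|\beta|} \;=\; \frac{\varepsilon'\,r^{l}}{|g|_{B(\by,r)}}.
\]

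The final step is to absorb the normalization factor. By \cref{bounds}, shrinking $V\subset V_0$ if necessary, we have $|g|_{B(\by,r)}\ge \eta\,r^{l}$ uniformly in $\by$, $r$ and $g\in\mathcal{S}_\f$; hence the right-hand side above is bounded by $\varepsilon'/\eta$. Choosing $\varepsilon' = \varepsilon\eta$ from the outset yields the desired inequality uniformly over $\phi\in\mathcal{G}(\f,V)$ and all $|\beta|\le l$. The only real point of care is the bookkeeping of the scaling factors in \eqref{eqLLtransform} and \eqref{eqLLpoly}, which is what makes the $r^{l}$ factor on the left match the lower bound $|g|_{B(\by,r)}\ge \eta r^{l}$ on the right and force a uniform estimate independent of $r$; once this is in place, the rest is a direct application of \cref{equi} and \cref{bounds}.
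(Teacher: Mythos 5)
Your proposal is correct and takes essentially the same route as the paper's own proof: both establish the scaling identity for $\bar\Phi_\beta$ under the normalization $\phi=g_{r,\by,g}$ (and the identical rule for the Taylor polynomial), apply \cref{equi} to $g$ and $P_{g,\by,l}$ on $B(\by,r)$, and absorb the resulting factor $r^{l}/\vert g\vert_{B(\by,r)}\le \eta^{-1}$ via the lower bound of \cref{bounds}, choosing $\varepsilon'=\varepsilon\eta$. The only (cosmetic) divergence is notational: the paper writes the rescaling as $\by+r^{-1}\bx$ with $r^{-1}=\nu^{R}$ a $\nu$-adic scalar of absolute value $r$, whereas you write $\by+r\bx$; your stated intent that $B(\mathbf{0},1)^{\beta(1)}$ is mapped into $B(\by,r)^{\beta(1)}$ is the correct one.
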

\begin{proof}
Let us choose a small $V$ that we get from \cref{equi} and \cref{bounds}. Let us denote $y_j+r^{-1}\bx_j:=y_j^{i_j}+r^{-1}(x_{j,1},\cdots,x_{j,i_j+1})$, and $\by+r^{-1}\bx_{\beta(1)}=(y_1+r^{-1}\bx_1,\cdots, y_d+r^{-1}\bx_d)$
Note that if $\phi=g_{r,\by,g}\in \mathcal{G}(\f,V)$,then  $$\bar\Phi_{\beta} \phi(\bx_{\beta(1)})=\frac{\Vert g\Vert_{B(\by,r)}}{r^{\vert\beta\vert}}\bar\Phi_{\beta} g(\by+r^{-1}\bx_{\beta(1)})$$ and $$\bar\Phi_{\beta} P_{\phi,\mathbf{0},l}(\bx_{\beta(1)})=\frac{ \Vert g\Vert_{B(\by,r)}}{r^{\vert\beta\vert}}\bar\Phi_{\beta} P_{g,\by,l}(\by+r^{-1}\bx_{\beta(1)}).$$  Hence, we have

$$
\begin{aligned}
  &\vert (\bar \Phi_{\beta}\phi-\bar\Phi_{\beta} P_{\phi,\mathbf{0},l})(\bx_{\beta(1)})\vert
  \\&= \frac{r^{\vert \beta\vert}}{\Vert g\Vert_{B(\by,r)}}\vert \bar\Phi_{\beta} g(\by+r^{-1}\bx_{\beta(1)})-\bar\Phi^k P_{g,\by,l}(\by+r^{-1}\bx_{\beta(1)})\vert.
\end{aligned} $$
Using \cref{equi} and \cref{bounds} for $\vert\beta\vert\leq l$ we  get,
$$ \vert (\bar \Phi_{\beta}\phi-\bar\Phi_{\beta} P_{\phi,\mathbf{0},l})(\bx_{\beta(1)})\vert\leq \frac{r^{\vert\beta\vert}}{\eta r^l}{\varepsilon \eta r^{l-\vert\beta\vert}}=\varepsilon.$$

For $\vert\beta\vert=l+1$ by compactness, we have a $K>0$ such that $\Vert\bar\Phi_{\beta}g\Vert_{V}\leq K$ for all $g\in \mathcal{S}_{\f}$,
$$\begin{aligned}
\Vert \bar\Phi_{\beta}\phi-\bar\Phi_{\beta}P_{\phi,\mathbf{0},l}\Vert_{B(0,1)^{l+1+d}}=&\Vert \bar\Phi_{\beta}\phi\Vert_{B(0,1)^{l+1+d}}\\=&\frac{r^{l+1}}{\Vert g\Vert_{B(\by,r)}}\sup_{\bx_{\beta(1)}\in B(0,1)^{l+1+d}}\vert\bar\Phi_{\beta} g(\by+r^{-1}\bx_{\beta(1)})\vert\leq \frac{r}{\eta}K<\varepsilon,\end{aligned}$$ if $V$ is small enough.
\end{proof}
\section{Completing the proof of \cref{mainp}}

\begin{theorem}
Let $\mathbf{B}\subset B(0,1)^d\subset \Q_\nu$ be an open ball and let $\f:\mathbf{B}\to\Q_\nu^n$ be a $C^{l+1}$ map which is $l$ nondegenerate at $\bx_0\in \mathbf{B}$. Let $\mu$ be a measure which is $D$-Federer and absolutely $(C,\alpha)$-decaying on $\mathbf{B}$ for some $D,C,\alpha>0$. Then there exists a neighborhood $V\subset \mathbf{B}$ of $\bx_0$ and a positive $\tilde {C}$ such that for any $g\in \mathcal{S}_{\f}$ is absolutely $(\tilde{C},\alpha')$-good on $V$ with respect to $\mu$.
\end{theorem}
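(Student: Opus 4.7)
The plan is to deduce the theorem by applying \cref{good} to the rescaled function $\phi := g_{r,\by,g}$, passing through the polynomial approximation of the previous two sections. First, shrink $\mathbf{B}$ to $V_0$ so that \cref{bounds} holds on $V_0$: for every ball $B(\by, r) \subset V_0$ and every $g \in \mathcal{S}_{\f}$, the normalized function $\phi \in \mathcal{G}(\f, V_0)$ satisfies $\|\phi\|_{B(\mathbf{0},1)^d} = \|P_{\phi,\mathbf{0},l}\|_{B(\mathbf{0},1)^d} = 1$. Then \cref{LL} allows a further shrinking of $V_0$ to a neighborhood $V$ on which, for every $\phi \in \mathcal{G}(\f, V)$ and every multi-index $\beta$ with $|\beta| \le l+1$,
\[
\sup_{\bx_{\beta(1)} \in B(\mathbf{0},1)^{\beta(1)}} |(\bar\Phi_\beta \phi - \bar\Phi_\beta P_{\phi,\mathbf{0},l})(\bx_{\beta(1)})| < \varepsilon_0,
\]
where $\varepsilon_0 > 0$ is a parameter to be fixed in the next paragraph.

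Second, set $P := P_{\phi,\mathbf{0},l}$, so that $P \in \mathcal{P}_{d,l}$ with $\|P\|_{B(\mathbf{0},1)^d} = 1$. By \cref{poly}, there exist universal constants $s, S > 0$ and $C_{\alpha_1,\alpha_1'} \ge 1$ (depending only on $d$ and $l$), an integer $k = k(P) \le l$, and a multi-index $\beta^* = \beta^*(P)$ with $|\beta^*| = k$, such that $P$ satisfies \cref{eq0,eq1b,upper} with these data. Choosing $\varepsilon_0$ sufficiently small relative to $s$ and $S$, and applying the ultrametric triangle inequality to each difference quotient, I upgrade this to the same conclusion for $\phi$ itself, with constants differing from those for $P$ by at most a universal factor. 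The key point is that $s, S, C_{\alpha_1,\alpha_1'}$ are independent of $P$ (they come from equivalence of norms on the finite-dimensional space $\mathcal{P}_{d,l}$), so a single choice of $\varepsilon_0$, and hence of $V$, works simultaneously for every $g \in \mathcal{S}_\f$ and every ball $B(\by, r) \subset V$.

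Third, I apply \cref{good} to $\phi$ with respect to the pushforward measure $\mu_{\by, r}$ on $B(\mathbf{0},1)^d$ obtained by affinely rescaling $\mu|_{B(\by, r)}$. The $D$-Federer and absolutely $(C, \alpha)$-decaying properties of $\mu$ are preserved under affine rescaling with the same constants, so \cref{good} yields
\[
\mu_{\by, r}(\{\bx \in \hat B^d : |\phi(\bx)| < \varepsilon\}) \le C_1\, \varepsilon^{\alpha/(2^{l+1}-2)}\, \mu_{\by, r}(B(\mathbf{0},1)^d)
\]
for $\varepsilon$ below a universal threshold, with $C_1$ depending only on $C, \alpha, d, l$; here I have used $\eta_k \alpha = \alpha/(2^{k+1}-2) \ge \alpha/(2^{l+1}-2)$. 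Unscaling and recalling $\|\phi\|_{B(\mathbf{0},1)^d} = \|g\|_{B(\by,r)}$, this translates to the absolutely good estimate for $g$ on $B(\by, r/\nu)$; a bounded number of applications of the Federer property of $\mu$ upgrades the estimate to the full ball $B(\by, r)$, and for $\varepsilon$ above the threshold the estimate holds trivially once $\tilde C$ is taken large enough. Setting $\alpha' := \alpha/(2^{l+1}-2)$ completes the argument.

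The main obstacle is the transfer in the second step: \cref{eq0,eq1b,upper} involve \emph{infima} of difference quotients over products of balls, and one must ensure that $\varepsilon_0$-level perturbations supplied by \cref{LL} do not destroy the corresponding lower bounds. The nonarchimedean triangle inequality makes this transfer clean, but it forces $\varepsilon_0$ to be chosen \emph{after} the universal constants $s, S$ of \cref{poly} are revealed. The universality of those constants, obtained through compactness and norm-equivalence on $\mathcal{P}_{d,l}$, is precisely what permits a single neighborhood $V$ and single constant $\tilde C$ to serve every $g \in \mathcal{S}_\f$ at once.
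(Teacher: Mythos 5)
Your proposal is correct and follows essentially the same route as the paper: normalize $g$ to $\phi\in\mathcal{G}(\f,V)$ via \cref{bounds}, apply \cref{poly} to $P_{\phi,\mathbf{0},l}$, transfer the hypotheses \cref{eq0}, \cref{eq1b}, \cref{upper} from the Taylor polynomial to $\phi$ using \cref{LL} and the ultrametric inequality (with the approximation threshold chosen after the universal constants $s,S,C_{\alpha_1,\alpha_1'}$ are fixed), and then invoke \cref{good}. Your explicit unscaling and Federer-upgrade step at the end is left implicit in the paper but is the intended reading, so no gap.
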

\begin{proof}
 By \cref{good}, it suffices to find $C_{\alpha_1,\alpha_1'},s,S$ for multi-index $\alpha_1,\alpha_1'$ and $V\subset \mathbf{B}$ such that for any $\phi\in \mathcal{G}(\f,V) $ there is a multi-index $\beta$ with $\vert\beta\vert=k \in\N$ such that, for every  $\alpha_1+\alpha_2=\beta=\alpha_1'+\alpha_2'$,  $\vert \alpha_1\vert >\vert \alpha_1'\vert$,
$$
\inf_{B(0,1)^{\vert\alpha_1\vert+d}} \vert\bar\Phi_{\alpha_1} \partial_{\alpha_2}\phi \vert \leq C_{\alpha_1,\alpha_1'}\inf_{B(0,1)^{ \vert\alpha_1'\vert+d}}\vert \bar\Phi_{\alpha_1'}\partial_{\alpha_2'}\phi\vert
$$
and
$$
\inf_{B(0,1)^{\vert\beta\vert+d }} \vert\bar\Phi_{\beta} \phi\vert>s, $$ and for each $\vert\eta\vert\leq k+1$,
$$
\Vert\bar\Phi_{\eta} \phi\Vert_{{B(0,1)}^{\vert\eta\vert+d}}<S.
$$

Using \cref{bounds}, we can choose $V_0$ around $\bx_0$ such that for $B(\by,r)\subset V_0$, $g\in \mathcal{S}_{\f},$
$$ \Vert P_{g,\by,l}\Vert_{B(\by,r)}=\Vert g\Vert_{B(\by,r)}.$$ This implies

$$\Vert P_{\phi,\mathbf{0},l}\Vert_{B(\mathbf{0},1)}=\frac{1}{\Vert g\Vert_{B(\by,r)}}\sup_{\bx\in B(\mathbf{0},1)}\vert P_{g, \by,l}(\by+r^{-1}\bx)\vert=1.$$
Now using \cref{poly} we can find $\vert\beta\vert=k\leq l$ and $s,S, C_{\alpha_1,\alpha'_1}>0$ such  that, for every  $\alpha_1+\alpha_2=\beta=\alpha_1'+\alpha_2'$,  $\vert \alpha_1\vert >\vert \alpha_1'\vert$,
 \begin{equation}\label{extra}\inf_{B(0,1)^{\vert \alpha_1\vert+d}} \vert\bar\Phi_{\alpha_1} \partial_{\alpha_2}P_{\phi,\mathbf{0},l}\vert \leq C_{\alpha_1,\alpha_1'}\inf_{B(0,1)^{\vert \alpha_1'\vert+d}} \vert \bar\Phi_{\alpha_1'}\partial_{\alpha_2'}P_{\phi,\mathbf{0},l}\vert,\end{equation} and
 \begin{equation}\label{down}
 \inf_{B(0,1)^{k+d}}\vert \bar \Phi_{\beta} P_{\phi,\mathbf{0},l}\vert > s
 \end{equation}
 and \begin{equation}\label{up}
\Vert \bar \Phi_{\eta} P_{\phi,\mathbf{0},l}\Vert_{B(0,1)^{\vert\eta\vert+d}}<S  ~ ~\forall~~ \vert\eta\vert\geq 0.\end{equation}
Note that from \cref{extra} for $\beta=\alpha_1'+\alpha_2', \vert \beta\vert>\vert\alpha_1'\vert$ we have,
$$\inf_{B(0,1)^{k+d}} \vert\bar\Phi_{\beta} P_{\phi,\mathbf{0},l}\vert \leq C_{\beta,\alpha_1'}\inf_{B(0,1)^{\vert \alpha_1'\vert+d}} \vert \bar\Phi_{\alpha_1'}\partial_{\alpha_2'}P_{\phi,\mathbf{0},l}\vert.$$

Let us take $a\in\N$ such that $\frac{1}{\nu^a}<\max_{\alpha_1+\alpha_2=\beta=\alpha_1'+\alpha_2'} C^{-1}_{\alpha_1,\alpha_1'}$. Hence by \cref{down} we have that
\begin{equation}\label{end1}
\inf_{B(0,1)^{\vert \alpha_1'\vert+d}} \vert \bar\Phi_{\alpha_1'}\partial_{\alpha_2'}P_{\phi,\mathbf{0},l}\vert>s C^{-1}_{\beta,\alpha_1'}>\frac{s}{\nu^a}.\end{equation}
Let us take $0<\varepsilon<\frac{s}{\nu^a}$. If $\alpha_1'+\alpha_2'=\beta$, then for any $\bz'\in B(0,1)^{\vert\alpha_1'\vert+d} $, by \cref{verybasic}, $$\bar\Phi_{\alpha_1'}\partial_{\alpha_2'}P_{\phi,\mathbf{0},l}(\bz')=\sum_{\bt_{\beta}} \bar\Phi_{\beta}P_{\phi,\mathbf{0},l}(\bt_\beta), \text{ and } \bar\Phi_{\alpha_1'}\partial_{\alpha_2'}\phi(\bz')=\sum_{\bt_{\beta}} \bar\Phi_{\beta}\phi(\bt_\beta),$$ where $\bt_{\beta}$ has coordinates in terms of $\bz'$. It should be clear that applying \cref{verybasic} multiple times will give  $\bt_\beta$'s, whose coordinates will be a variation of coordinates of $\bz'$. Therefore by \cref{LL} and the equality above, and property of ultrametric norm, we have \begin{equation}\label{end2}
\Vert \bar\Phi_{\alpha_1'}\partial_{\alpha_2'}\phi-\bar\Phi_{\alpha_1'}\partial_{\alpha_2'}P_{\phi,\mathbf{0},l}\Vert_{B(0,1)^{\vert\alpha_1'\vert+d}}< \varepsilon.\end{equation}
Since $\varepsilon<\frac{s}{\nu^a}$, by \cref{end1} and \cref{end2} we have that for any $\bz'\in B(0,1)^{\vert\alpha_1'\vert+d} $,
\begin{equation}\label{end3}
\vert \bar\Phi_{\alpha_1'}\partial_{\alpha_2'}\phi(\bz')\vert= \vert\bar\Phi_{\alpha_1'}\partial_{\alpha_2'}P_{\phi,\mathbf{0},l}(\bz')\vert
\end{equation} for any $\alpha_1'+\alpha_2'=\beta$. Hence by \cref{extra} we have, for every  $\alpha_1+\alpha_2=\beta=\alpha_1'+\alpha_2'$,  $\vert \alpha_1\vert >\vert \alpha_1'\vert$,  $$\inf_{B(0,1)^{\vert \alpha_1\vert+d}} \vert\bar\Phi_{\alpha_1} \partial_{\alpha_2}\phi\vert \leq C_{\alpha_1,\alpha_1'}\inf_{B(0,1)^{\vert \alpha_1'\vert+d}} \vert \bar\Phi_{\alpha_1'}\partial_{\alpha_2'}\phi\vert.$$

By \cref{LL} and \cref{down}, we get that for any $\bx_{\beta(1)}\in B(\mathbf{0},1)^{\beta(1)}$, $$
s< \max({\vert \bar \Phi_{\beta} \phi(\bx_{\beta(1)})\vert , \varepsilon}).$$ Since $\varepsilon<\frac{s}{\nu^a}$ we get, $$
\inf_{B(0,1)^{k+d}} \vert \bar\Phi_{\beta}\phi\vert > s .$$
Now for upper bound, we apply \cref{LL} and use \cref{up} to get

$$
\Vert \bar \Phi_{\eta}\phi\Vert_{B(0,1)^{k+d}} \leq \max(\varepsilon, S)=S$$ for every $\vert\eta\vert\leq l,$ since $\varepsilon<\frac{s}{\nu^a}<S.$

\end{proof}


\section{Appendix}\label{appen}

 In this appendix, we will prove the following proposition which can be thought of as the analogue of Taylor's theorem for real variables. One of the main issues here is the careful computation of the `error' term as we need it for \cref{taylor}. In the one dimensional case, the proof is much easier, and is well known \cite[Theorem $29.4$]{Sc}. We could not find a reference in general and so we provide a proof. Let us recall that for a $C^k$ function $f:\Q_\nu^d\to \Q_\nu$, we denoted by $P_{f,\by,k}$ the Taylor polynomial of $f$ at $\by\in\Q_\nu^d$; see \eqref{defn:taylor}.
\begin{proposition}\label{TAYLOR} For a $C^{l+1}$ map $f:\Q_\nu^d\to\Q_\nu$ and $\by\in \Q_\nu^d$, we have the following
 for $k\leq l$:
 \begin{equation}\label{taylorp}
 \begin{aligned}f(\bx) &=P_{f,\by,k}(\bx)+\sum_{\substack{{\vert \beta\vert =k+1}\\\beta=(i_1,\cdots,i_d)\\i_1>0}}\bar\Phi_{\beta}f(x_1,y_1^{i_1},y_2^{i_2+1},\cdots,y_d^{i_d+1})L_{\beta,\by}(\bx)\\
 &+ \sum_{\substack{{\vert \beta\vert =k+1}\\ \beta=(0,i_2,\cdots,i_d)\\i_2>0}}\bar\Phi_{\beta}f(x_1,x_2,y_2^{i_2},y_3^{i_3+1},\cdots,y_d^{i_d+1})L_{\beta,\by}(\bx)\\
 & + \cdots+\bar\Phi_{(k+1)e_d}f(x_1,\cdots,x_d,y_d^{k+1})L_{(k+1)e_d,\by}(\bx)\end{aligned}.\end{equation}
\end{proposition}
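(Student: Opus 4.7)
The plan is to prove \cref{TAYLOR} by induction on the number of variables $d$, combining the one-variable $p$-adic Taylor formula (Schikhof, Theorem~29.4, as cited by the authors) with a careful choice of which variable to expand first. The key observation, not immediately apparent from the statement, is that one should expand with respect to the \emph{last} variable $x_d$ first, rather than the first variable.

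The base case $d=1$ is precisely the one-variable Taylor formula
\[
f(x_1) = \sum_{i=0}^k \bar\Phi^i f(y_1^{i+1})(x_1-y_1)^i + \bar\Phi^{k+1}f(x_1,y_1^{k+1})(x_1-y_1)^{k+1},
\]
whose unique error term matches the ``last'' error term of the proposition (with $\beta=(k+1)e_1$), while the Groups $1$ through $d-1$ are vacuous.

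For the inductive step I would apply the one-variable Taylor formula to $x_d\mapsto f(x_1,\ldots,x_d)$ with $x_1,\ldots,x_{d-1}$ treated as fixed parameters, obtaining
\[
f(\bx) = \sum_{i_d=0}^k h_{i_d}(x_1,\ldots,x_{d-1})(x_d-y_d)^{i_d} + \bar\Phi_d^{k+1}f(x_1,\ldots,x_d,y_d^{k+1})(x_d-y_d)^{k+1},
\]
where $h_{i_d}(x_1,\ldots,x_{d-1}):=\bar\Phi_d^{i_d}f(x_1,\ldots,x_{d-1},y_d^{i_d+1})$. The last summand is exactly the ``last'' error term of the proposition. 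I would then apply the induction hypothesis to each $h_{i_d}$, viewed as a function of $d-1$ variables, at $(y_1,\ldots,y_{d-1})$ to order $k-i_d$. Since the operators $\bar\Phi_d^{i_d}$ and $\bar\Phi_{\beta'}$ commute for any multi-index $\beta'=(i_1,\ldots,i_{d-1})$, one has the identity $\bar\Phi_{\beta'}h_{i_d}(\bar{\by}'_{\beta'}) = \bar\Phi_{\beta}f(\bar{\by}_{\beta})$ with $\beta=(i_1,\ldots,i_{d-1},i_d)$. Multiplying by $(x_d-y_d)^{i_d}$ and summing over $i_d\in\{0,\ldots,k\}$ reassembles the Taylor polynomials of the $h_{i_d}$ into $P_{f,\by,k}(\bx)$ exactly, since $|\beta'|+i_d\le k$. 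The induction's error terms with $\beta'$ having first nonzero entry at position $m\in\{1,\ldots,d-1\}$ become, after multiplication by $(x_d-y_d)^{i_d}$, precisely the Group $m$ terms of the proposition with $\beta=(i_1,\ldots,i_{d-1},i_d)$ evaluated at $(x_1,\ldots,x_{m-1},x_m,y_m^{i_m},y_{m+1}^{i_{m+1}+1},\ldots,y_{d-1}^{i_{d-1}+1},y_d^{i_d+1})$, matching the evaluation point demanded by the proposition.

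The main obstacle is recognizing the correct order of expansion. Expanding in $x_1$ first produces the remainder $(x_1-y_1)^{k+1}\bar\Phi_1^{k+1}f(x_1,y_1^{k+1},x_2,\ldots,x_d)$ with full arguments $x_2,\ldots,x_d$ in the later slots, whereas the proposition requires those slots to be evaluated at $y_j^{i_j+1}$; converting between the two by further expansion would introduce extra factors $(x_j-y_j)$ and push the total degree of the error beyond $k+1$, breaking the format. Expanding in $x_d$ first instead locks the evaluation $y_d^{i_d+1}$ into the last variable at the outset, and this evaluation is preserved unchanged through the $(d-1)$-dimensional induction on $h_{i_d}$. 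What then remains is the combinatorial bookkeeping of multi-indices and evaluation points, together with the routine verification that each $h_{i_d}$ retains enough regularity to invoke the induction hypothesis, which it does since $f\in C^{l+1}$ and we only need $|\beta'|+i_d\le k+1\le l+1$ mixed differences of $f$.
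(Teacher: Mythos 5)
Your proposal is correct, but it proves the proposition by a genuinely different induction than the paper. The paper inducts on the order $k$ of the Taylor polynomial: the base case $k=0$ is a telescoping sum over the $d$ variables, and the inductive step takes each order-$(m+1)$ error term $\bar\Phi_\beta f(x_1,\dots,x_j,y_j^{i_j},\dots)$ and peels off its value at $\bar\by_\beta$ (which feeds into $P_{f,\by,m+1}$) by successively replacing $x_j,x_{j-1},\dots,x_1$ with $y_j,\dots,y_1$ via the defining identity for $\bar\Phi$, producing the order-$(m+2)$ error terms. You instead induct on the dimension $d$, applying the one-variable Taylor formula in the last variable first and then the $(d-1)$-dimensional statement to each coefficient $h_{i_d}(x_1,\dots,x_{d-1})=\bar\Phi_d^{i_d}f(x_1,\dots,x_{d-1},y_d^{i_d+1})$ to order $k-i_d$. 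Your key observations are sound: expanding in $x_d$ first locks the evaluation $y_d^{i_d+1}$ into the last block so that the remainder format is preserved, the unique decomposition $\beta=(\beta',i_d)$ with $|\beta'|\le k-i_d$ reassembles $P_{f,\by,k}$ exactly, and the error terms of the $h_{i_d}$ land on precisely the evaluation points the proposition demands. What your route buys is a cleaner conceptual reduction to the known one-variable result \cite[Theorem 29.4]{Sc}; what it costs is the need to justify that the $h_{i_d}$ inherit the required regularity and that $\bar\Phi_{\beta'}h_{i_d}=\bar\Phi_{(\beta',i_d)}f$ restricted to the diagonal in the last block, both of which follow from the commutativity of partial difference quotients that the paper records in \cref{function}. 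The paper's order-induction avoids introducing the auxiliary functions entirely but pays for it with the same amount of multi-index bookkeeping inside each inductive step; the two arguments are of comparable length and either would serve.
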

\begin{remark} We remark that Proposition 11.1 differs from the Lagrange remainder formula and also from the Mean Value theorem. Let us explain the simplest meaningful case to point out the difference. Let $g:\Q_\nu\to \Q_\nu$ be a $C^2$ map. \cref{TAYLOR} shows
\begin{equation}\label{lagrange_p}
g(x)=g(y)+\underbrace{\bar\Phi^1 g(y,y)}_{g'(y)}(x-y)+\bar\Phi^2g(x,y,y)(x-y)^2.
\end{equation}
Note that the remainder term contains the function $\bar\Phi^2g$ evaluated at $(x,y,y)$ and  the function $\bar\Phi^2g$ has a larger domain than that of $g''$, and  $\frac{1}{2}g''(z)=\bar\Phi^2g(z,z,z)$ for all $z\in \Q_\nu.$

On the other hand for a function $\psi:\R\to \R$ a $C^2$ map, by Lagrange form, we have 
$$
\psi(x)=\psi(y)+\psi'(y)(x-y)+\frac{1}{2}\psi''(c) (x-y)^2,
$$
where $c$ is a point between $x$ and $y$ in $\R.$ \cref{TAYLOR} is the generalization of \eqref{lagrange_p} in higher dimensions.
\end{remark}

\begin{proof}

 We proceed by induction on $k$ and accordingly, set $k=0$. Then,
 $$\begin{aligned} & f(\bx)-f(\by)\\
 & =\sum_{i=1}^{d}f(x_1,\cdots,x_{i-1},y_{i},y_{i+1},\cdots,y_{d})-f(x_1,\cdots,x_{i-1},x_{i},y_{i+1},\cdots,y_{d})\\
 &= \sum_{i=1}^d \bar\Phi_{e_i}f(x_1,\cdots,x_{i-1},x_{i},y_{i},y_{i+1},\cdots,y_d)L_{e_j,\by}(\bx).\end{aligned}$$ Since $P_{f,\by,0}(\bx)=f(\by)$, we have the statement to be true for $k=0$. By induction hypothesis let us assume that the statement is true for $k=m$:
 \begin{equation}\label{hyp1+}
 \begin{aligned} f(\bx)=P_{f,\by,m}(\bx)+I_{1}+\cdots+I_d,\\
 \end{aligned}\end{equation} where $$
 I_j:=\sum_{\substack{{\vert \beta\vert =m+1}\\ \beta=(0,\cdots,0,i_j,\cdots,i_d)\\i_j>0}}\bar\Phi_{\beta}f(x_1,\cdots,x_j,y_j^{i_j},y_{j+1}^{i_{j+1}+1},\cdots,y_d^{i_d+1})L_{\beta,\by}(\bx).$$ We want to show that the statement is true for $k=m+1\leq l$.

 We know by definition, that the coefficient of a term in the summand $I_1$ is,  $$\begin{aligned}
 \bar\Phi_{\beta}f(x_1,y_1^{i_1},y_2^{i_2+1},\cdots,y_d^{i_d+1})=& \bar\Phi_{\beta}f(y_1^{i_1+1},y_2^{i_2+1},\cdots,y_d^{i_d+1})\\& +\bar\Phi_{\beta+e_1}f(x_1,y_1^{i_1+1},y_2^{i_2+1},\cdots,y_d^{i_d+1})L_{e_1,\by}(\bx).\end{aligned}$$ We also have, that the coefficient of a term in the summand $I_2$ is,
 $$\begin{aligned}
 & \bar\Phi_{\beta}f(x_1,x_2,y_2^{i_2},y_3^{i_3+1},\cdots,y_d^{i_d+1})\\= & \bar\Phi_{\beta}f(x_1,y_2^{i_2+1},y_3^{i_3+1},\cdots,y_d^{i_d+1})+ \bar\Phi_{\beta+e_2}f(x_1,x_2,y_2^{i_2+1},y_3^{i_3+1},\cdots,y_d^{i_d+1})L_{e_2,\by}(\bx)\\
 = & \bar\Phi_{\beta}f(y_1,y_2^{i_2+1},y_3^{i_3+1},\cdots,y_d^{i_d+1})+ \bar\Phi_{\beta+e_2}f(x_1,x_2,y_2^{i_2+1},y_3^{i_3+1},\cdots,y_d^{i_d+1})L_{e_2,\by}(\bx)\\ & + \bar\Phi_{\beta+e_1}f(x_1,y_1,y_2^{i_2+1},y_3^{i_3+1},\cdots,y_d^{i_d+1})L_{e_1,\by}(\bx).
 \end{aligned}$$
 Continuing like this we get that the coefficient of a term in $I_d$ is,
 $$\begin{aligned}
 & \bar\Phi_{(m+1)e_d}f(x_1,\cdots,x_d,y_d^{m+1})\\
 & =\bar\Phi_{(m+1)e_d}f(x_1,\cdots, x_{d-1},y_d^{m+2})+\bar\Phi_{(m+2)e_d}f(x_1,\cdots,x_d,y_d^{m+2})L_{e_d,\by}(\bx)\\
 &=\bar\Phi_{(m+1)e_d}f(x_1,\cdots, y_{d-1},y_d^{m+2})+\bar\Phi_{(m+1)e_d+e_{d-1}}f(x_1,\cdots, x_{d-1},y_{d-1},y_d^{m+2})L_{e_{d-1},\by}(\bx)\\
 & +\bar\Phi_{(m+2)e_d}f(x_1,\cdots,x_d,y_d^{m+2})L_{e_d,\by}(\bx)\\
 & = \bar\Phi_{(m+1)e_d}f(y_1,\cdots, y_{d-1},y_d^{m+2})+\sum_{j=1}^{d-1}\bar\Phi_{(m+1)e_d+e_{j}}f(x_1,\cdots, x_{j},y_{j},\cdots, y_d^{m+2})L_{e_j,\by}(\bx)\\
 & + \bar\Phi_{(m+2)e_d}f(x_1,\cdots,x_d,y_d^{m+2})L_{e_d,\by}(\bx).\end{aligned}$$
One can observe that in all of the above, we can rewrite each coefficient in $I_j$ as a sum of terms, where the first term is $\bar\Phi_{\beta}f(y_1,\cdots,y_{j-1},y_{j}^{i_j+1},\cdots,y_d^{i_d+1}) L_{\beta,\by}(\bx)$, and $\beta=(0,\cdots,0,i_j,\cdots,i_d), i_j>0, \vert \beta\vert=m+1.$ 
 Therefore, we can rewrite \cref{hyp1+} and use the previous equations as follows,
 \begin{equation}\label{hyp2+}
 \begin{aligned} f(\bx) & =P_{f,\by,m}(\bx)+\sum_{\vert\beta\vert=m+1}\bar\Phi_{\beta}f(\bar\by_{\beta})L_{\beta,\by}(\bx)+\\
 &\sum_{\substack{{\vert \beta\vert =m+2}\\\beta=(i_1,\cdots,i_d)\\i_1>0}}\bar\Phi_{\beta}f(x_1,y_1^{i_1},y_2^{i_2+1},\cdots,y_d^{i_d+1})L_{\beta,\by}(\bx)\\
 &+ \sum_{\substack{{\vert \beta\vert =m+2}\\ \beta=(0,i_2,\cdots,i_d)\\i_2>0}}\bar\Phi_{\beta}f(x_1,x_2,y_2^{i_2},y_3^{i_3+1},\cdots,y_d^{i_d+1})L_{\beta,\by}(\bx)\\
 & + \cdots+\bar\Phi_{(m+2)e_d}f(x_1,\cdots,x_d,y_d^{m+2})L_{(m+2)e_d,\by}(\bx).
 \end{aligned}\end{equation}
 Since  $P_{f,\by,m+1}(\bx)=P_{f,\by,m}(\bx)+\sum_{\vert\beta\vert=m+1} \bar\Phi_{\beta}f(\bar\by_{\beta})L_{\beta,\by}(\bx)$, we have the statement to be true for $k=m+1$. This completes the proof.
\end{proof}

We also give proof of the following lemmata whose statements is used in the paper several times. 
\begin{lemma}\label{verybasic}
Let $g:\Q_\nu\to\Q_\nu$ be a $C^{k+1}$ map. Then for any $y_1,\cdots,y_{k+1}\in\Q_\nu,$
$$\bar\Phi^k g'(y_1,\cdots, y_{k+1})= \bar\Phi^{k+1}g(y_1,y_1,y_2,\cdots,y_{k+1})+\cdots+\bar\Phi^{k+1}g(y_1,y_2,\cdots, y_{k},y_{k+1}, y_{k+1}).$$ 
\end{lemma}
\begin{proof}
    We will prove this by induction on $k$. If $k=0$, then $
    \bar\Phi^{0}g'(y)=g'(y)=\bar\Phi^1 g(y,y).$ Suppose by the induction hypothesis, we know the conclusion for any $k$, $1\leq k\leq i.$ Now we wish to show that for $k=i+1$ the conclusion of this lemma holds. 

Note that for any $(y_1,\cdots,y_{k+1})\in \nabla^{k+1}\Q_\nu$
$$
\bar\Phi^{i+1}g'(y_1,\cdots,y_{i+2})=\frac{\bar\Phi^{i}g'(y_1,y_3,\cdots,y_{i+2})-\bar\Phi^{i}g'(y_2,\cdots,y_{i+2})}{(y_1-y_2)}.
$$
    Using the induction hypothesis,
    $$\begin{aligned}
&\bar\Phi^{i}g'(y_1,y_3,\cdots,y_{i+2})\\
&=\bar\Phi^{i+1}g(y_1,y_1,y_3,\cdots,y_{i+2})+\bar\Phi^{i+1}g(y_1,y_3,y_3,y_4,\cdots,y_{i+2})\cdots+\bar\Phi^{i+1}g(y_1,y_3,\cdots, y_{i+2},y_{i+2})
    \end{aligned}
    $$ 
    
    and$$
    \begin{aligned}
&\bar\Phi^{i}g'(y_2,\cdots,y_{i+2})\\
&=\bar\Phi^{i+1}g(y_2,y_2,\cdots,y_{i+2})+\bar\Phi^{i+1}g(y_2,y_3,y_3,y_4,\cdots,y_{i+2})+\cdots+\bar\Phi^{i+1}g(y_2,\cdots,y_{i+2},y_{i+2}).
\end{aligned}
    $$
    Now substituting the above two, we get 
    $$
    \begin{aligned} &\bar\Phi^{i+1}g'(y_1,\cdots,y_{i+2})\\&=
\frac{\bar\Phi^{i+1}g(y_1,y_1,y_3,\cdots,y_{i+2})-\bar\Phi^{i+1}g(y_2,y_2,y_3,\cdots,y_{i+2})}{y_1-y_2}+\\
& \hspace{2.5 in} \bar\Phi^{i+2}g(y_1,y_2,y_3,y_3,\cdots,y_{i+2})+\cdots+\bar\Phi^{i+2}g(y_1,y_2,\cdots,y_{i+2},y_{i+2})\\
& =\frac{\bar\Phi^{i+1}g(y_1,y_1,y_3,\cdots,y_{i+2})-\bar\Phi^{i+1}g(y_1,y_2,\cdots,y_{i+2})}{y_1-y_2}+\\& \hspace{1.0 in} \frac{\bar\Phi^{i+1}g(y_1,y_2,\cdots,y_{i+2})-\bar\Phi^{i+1}g(y_2,y_2,y_3,\cdots,y_{i+2})}{y_1-y_2}+\\
& \hspace{2.5 in} \bar\Phi^{i+2}g(y_1,y_2,y_3,y_3,\cdots,y_{i+2})+\cdots+\bar\Phi^{i+2}g(y_1,y_2,\cdots,y_{i+2},y_{i+2})\\
& =\bar\Phi^{i+2}g(y_1,y_1,y_2,y_3,\cdots,y_{i+2})+\bar\Phi^{i+2}g(y_1,y_2,y_2,y_3,\cdots,y_{i+2})+\\
& \hspace{2.5 in} \bar\Phi^{i+2}g(y_1,y_2,y_3,y_3,\cdots,y_{i+2})+\cdots+\bar\Phi^{i+2}g(y_1,y_2,\cdots,y_{i+2},y_{i+2}).
    \end{aligned}
    $$
    Thus the proof is completed using the above and the definition of $\bar\Phi^{k+1}g'$.
\end{proof}

\begin{lemma}\label{apn: lemma 1}
 For $\beta=(i_1,\cdots, i_d)$ any multi index with $\vert\beta\vert=l$ and any $(\bx_1,\cdots,\bx_d)\in B(\mathbf{0},1)^{\beta(1)}$, for any  $f:\Q_\nu^d\to \Q_\nu,$ that is $C^{l+1},$ $1\leq j\leq d$, we get 
\begin{equation}\label{claim0}\begin{aligned}
& i_j\bar \Phi_\beta f(\bx_1,\cdots,\bx_{d})\\
& = \bar\Phi_{\beta-e_j}\partial_{e_j}f(\bx_1,\cdots,\bx_{j-1},x_{j,1},\cdots,x_{j,i_j},\bx_{j+1},\cdots,\bx_d)\\
& ~~~~~ +\sum_{i=1}^{i_j} \bar\Phi_{\beta+e_j}f(\bx_1,\cdots,\bx_{j-1},x_{j,1},\cdots,x_{j,i-1},x_{j,i},x_{j,i},x_{j,i+1},\cdots,x_{j,i_j+1},\bx_{j+1},\cdots,\bx_d)(x_{j,i_j+1}-x_{j,i}).
\end{aligned}
\end{equation}
\end{lemma}
\begin{proof}
    First note that by \cref{verybasic}, $$\begin{aligned} &\bar\Phi_{\beta-e_j}\partial_{e_j}f(\bx_1,\cdots,\bx_{j-1},x_{j,1},\cdots,x_{j,i_j},\bx_{j+1},\cdots,\bx_d)\\& 
    = \bar\Phi_{\beta}f(\bx_1,\cdots,\bx_{j-1},x_{j,1}, x_{j,1},x_{j,2},\cdots, x_{j,i_j},\bx_{j+1},\cdots,\bx_d)+\\
    & \cdots+ \bar\Phi_{\beta}f(\bx_1,\cdots,\bx_{j-1},x_{j,1},x_{j,2},\cdots,x_{j,i_j-1},x_{j,i_j},x_{j,i_j},\bx_{j+1},\cdots,\bx_d).
    \end{aligned}$$
    Also, note there are $i_j$ many summands in the above sum. 
    
    Then by definition, for any $1\leq i\leq i_j,$ \begin{equation*}
        \begin{aligned}
          & \bar \Phi_\beta f(\bx_1,\cdots,\bx_{d})-  \bar\Phi_{\beta}f(\bx_1,\cdots,\bx_{j-1}, x_{j,1},\cdots,x_{j,i-1}, x_{j,i}, x_{j,i},x_{j,i+1}, \cdots, x_{j,i_j},\bx_{j+1},\cdots,\bx_d)  \\
         = & \bar \Phi_{\beta+e_j} f(\bx_1,\cdots,\bx_{j-1}, x_{j,1},\cdots,x_{j,i-1}, x_{j,i}, x_{j,i},x_{j,i+1}, \cdots, x_{j,i_j}, x_{j, i_j+1},\bx_{j+1},\cdots,\bx_d) (x_{j,i_j+1}-x_{j,i}).
        \end{aligned}
    \end{equation*}
    Combining these two observations, the lemma follows.
\end{proof}

The following is a version of \cite[Lemma 29.2, (v)]{Sc}. We provide a proof to make this article self contained. 
\begin{lemma}\label{chain chain} Let $g:\Q_\nu^d\to\Q_\nu$ be $C^{l+1}$ map. For any $\alpha$ multi-index with $\vert\alpha\vert=l$, we have the following, for any $z\in\Q_\nu, \bx_{\alpha(1)}\in \Q_\nu^{\vert \alpha\vert+d},$ $1\leq i\leq d,$
\begin{equation}\label{basic2}
\begin{aligned}
    &\bar\Phi_{\alpha+e_i}\left( L_{e_i,\by}  g \right)(z,\bx^{(i)}_{\alpha(1)})\\
  &= L_{e_i,\by}(z)\bar\Phi_{\alpha+e_i}g(z,\bx^{(i)}_{\alpha(1)})+ \bar\Phi_{\alpha}g (\bx_{\alpha(1)}).
  \end{aligned}\end{equation}
\end{lemma} 
\begin{proof}
The proof is done by induction on $l$. When $l=0,$ then $\alpha=0$,
Suppose $(z,\bx)\in\Q_\nu^{d+1},$ $z\neq x_i$ and $1\leq i\leq d$. Then $$\begin{aligned}
&\Phi_{e_i}(L_{e_i,\by}g)(x_1,\cdots,x_{i-1},z,x_i, x_{i+1}, \cdots,x_d)\\
& =\frac{(z-y_i) g(x_1,\cdots,x_{i-1}, z, x_{i+1},\cdots,x_d)-(x_i-y_i)g(\bx)}{z-x_i}\\
&= (z-y_i) \Phi_{e_i}g(z,\bx^{(i)})+g(\bx).
\end{aligned}$$
Then taking limits gives us \cref{basic2} for $l=0.$
Now suppose by the induction hypothesis, that \cref{basic2} is true for when $\vert \alpha\vert\leq l-1.$ Now let us take $\alpha$ such that $\vert\alpha\vert=l$, and fix $i$, where $1\leq i\leq d.$ There exists some $1\leq j\leq d$ such that $\alpha=\beta+e_j$, where $\vert \beta\vert=l-1.$
Then for $z\in\Q_\nu, \bx_{\alpha(1)}\in \Q_\nu^{\vert \alpha\vert+d}$, $x_{j,1}\neq x_{j,2}$, 
$$\begin{aligned}
&\Phi_{\alpha+e_i} \left( L_{e_i,\by}  g \right)(z,\bx^{(i)}_{\alpha(1)})\\
& = \frac{\Phi_{\beta+e_i}( L_{e_i,\by}  g)(z, {\hat\bx_{\alpha(1),j,2}^{(i)}})- \Phi_{\beta+e_i}( L_{e_i,\by}  g)(z, \hat\bx^{(i)}_{\alpha(1),j,1})}{x_{j,1}-x_{j,2}},  
\end{aligned}
$$
where $\hat\bx_{\alpha(1),j,1}=(\bx_1,\bx_{j-1}, x_{j,2}, \cdots, x_{j,i_j+1},\bx_{j+1},\cdots,\bx_d)$,  and similarly for  $\hat\bx_{\alpha(1),j,2}$ with $x_{j,2}$ missing from $\bx_{\alpha(1)}$ among the $j$-th corodinates.
Now using the induction hypothesis, we get 
$$  \Phi_{\beta+e_i}( L_{e_i,\by}  g)(z, \hat\bx^{(i)}_{\alpha(1),j,2})=  L_{e_i,\by}(z)\Phi_{\beta+e_i} g(z,\hat\bx^{(i)}_{\alpha(1),j,2})+ \Phi_{\beta}g (\hat\bx_{\alpha(1),j,2}).$$
and 
$$  \Phi_{\beta+e_i}( L_{e_i,\by}  g)(z, \hat\bx^{(i)}_{\alpha(1),j,1})=  L_{e_i,\by}(z)\Phi_{\beta+e_i}g(z,\hat\bx^{(i)}_{\alpha(1), j, 1})+ \Phi_{\beta}g (\hat\bx_{\alpha(1),j,1}).$$
Using the above two, and $\alpha=\beta+e_j,$ 
we get $$\begin{aligned}
\Phi_{\alpha+e_i} \left( L_{e_i,\by}  g \right)(z,\bx_{\alpha(1)})
&= \frac{(z-y_i)\Phi_{\beta+e_i} g(z,\hat\bx^{(i)}_{\alpha(1),j,2})- (z-y_i)\Phi_{\beta+e_i} g(z,\hat\bx^{(i)}_{\alpha(1),j,1}) }{x_{j,1}-x_{j,2}}
+ \Phi_{\beta+e_j} g (\bx_{\alpha(1)})\\
&=(z-y_i) \Phi_{\alpha+e_i}g(z,\bx^{(i)}_{\alpha(1)})+ \Phi_{\alpha}g(\bx_{\alpha(1)}).
\end{aligned}
$$
\end{proof}

\begin{lemma}\label{bi}Suppose $\f=(f_1,\cdots,f_n):U\subset \Q_\nu^d\to\Q_\nu^n$ is a nonsingular $C^{l+1}$ map at $\bx_0$, we can find a neighbourhood $V$ of $\bx_0$ such that $\f|_{V}$ is bi-Lipschitz.
\end{lemma}
\begin{proof}
    For simplicity of notation and to make it more readable, we give a proof for $d=2$. The proof for the other cases is exactly the same. Note that, for $\bx=(x_1,x_2), \by=(y_1,y_2)$, for every $1\leq i\leq n$, $$\begin{aligned}
    f_i(\bx)-f_i(\by)=& f_i(x_1,x_2)-f_i(x_1,y_2)+f_i(x_1,y_2)-f_i(y_1,y_2)\\
    = & \bar\Phi_{e_2}f_i(x_1,x_2,y_2)(x_2-y_2)+ \bar\Phi_{e_1}f_i(x_1,y_1,y_2)(x_1-y_1).
    \end{aligned}
    $$

This implies that 
\begin{equation}\label{Axy}
\f(\bx)-\f(\by)= A_{\bx,\by}(\bx-\by)^T,
\end{equation}
where $$A_{\bx,\by}= \begin{bmatrix}
    \bar\Phi_{e_1}f_1(x_1,y_1,y_2) & \bar\Phi_{e_2} f_1(x_1,x_2,y_2)\\
    \vdots & \vdots\\
    \bar\Phi_{e_1}f_n(x_1,y_1,y_2) & \bar\Phi_{e_2} f_n(x_1,x_2,y_2)
\end{bmatrix}.$$
Let us also denote by $A_{\bx,\by}(r,s)$ the $2\times 2$ matrix with $r,s$-th rows from the above matrix $A_{\bx,\by}$. Note that 
$A_{\bx_0,\bx_0}=\nabla \f(\bx_0)$, by \eqref{beta}.  
Since $\f$ is nonsingular at $\bx_0$, we know $\nabla \f(\bx_0)$ has rank $2$.  This implies that there exist $1\leq i_1<i_2\leq n $ such that $$A_{\bx_0,\bx_0}({i_1,i_2})=\begin{bmatrix}
    \partial_{e_1}f_{i_1}(\bx_0) & \partial_{e_2}f_{i_1}(\bx_0)\\
    \partial_{e_1}f_{i_2}(\bx_0) & \partial_{e_2}f_{i_2}(\bx_0)
\end{bmatrix} $$ is invertible. By continuity of the determinant function, there exists a neighborhood ball $V$ (which is both open and closed) of $\bx_0$, such that for any $\bx,\by\in V,$ we have that $A_{\bx,\by}({i_1,i_2})$ is also invertible. Moreover, using the ultrametric norm, one can also guarantee that $\vert \det(A_{\bx,\by}(i_1,i_2))\vert $ is nonzero and constant on $V\times V.$ Thus $(\bx, \by) \to A^{-1}_{\bx,\by}(i_1,i_2)$ is a continuous map defined on $V\times V$. Hence \begin{equation}\label{end_1}\Vert \bx-\by\Vert \leq \Vert A^{-1}_{\bx,\by}({i_1,i_2})\Vert \Vert A_{\bx,\by}({i_1,i_2})(\bx-\by)^T\Vert.\end{equation} Also \begin{equation}\label{end_2}\Vert A_{\bx,\by}(i_1,i_2)(\bx-\by)^T\Vert=\Vert (f_{i_1},f_{i_2})(\bx)-(f_{i_1},f_{i_2})(\by)\Vert\leq \Vert\f(\bx)-\f(\by)\Vert. \end{equation} Since $(\bx,\by)\to \Vert A^{-1}_{\bx,\by}({i_1,i_2})\Vert$ is continuous on $V\times V$, and $V$ is a compact ball, there is a $K_1>0$ such that $\Vert A^{-1}_{\bx,\by}({i_1,i_2})\Vert \leq K_1$. This  together with \eqref{end_1} and \eqref{end_2} completes the proof of the lower bound in the Lipschitz condition. The upper bound follows from \eqref{Axy} using the compactness of $V$ and continuity of the map $(\bx,\by)\to \Vert A_{\bx,\by}\Vert.$

\end{proof}

\subsection*{Acknowledgements} The authors thank the anonymous referees for their valuable comments that helped to clarify many details of the first version of this paper. SD thanks Ralf Spatzier  and Subhajit Jana for many helpful discussions and several helpful remarks which have improved the paper.
\bibliographystyle{abbrv}
\bibliography{KT.bib}

\end{document}